\newtheorem{theorem}{Theorem}[section]
\newtheorem{question}[theorem]{Question}
\newtheorem{corollary}[theorem]{Corollary}
\newtheorem{conjecture}[theorem]{Conjecture}
\newtheorem{lemma}[theorem]{Lemma}
\newtheorem{proposition}[theorem]{Proposition}
\newtheorem{definition}[theorem]{Definition}
\newtheorem{example}[theorem]{Example}
\newtheorem{remark}[theorem]{Remark}
\newcounter{notes}%
	\newcommand{\ignore}[1]{}  
\newcommand{\id}{\mathrm{id}}
\newcommand{\Li}{\mathrm{Li}_2}
\newcommand{\Vol}{\mathrm{Vol}}
\newcommand{\Log}{\mathrm{Log}}
\tikzset{
  on each segment/.style={
    decorate,
    decoration={
      show path construction,
      moveto code={},
      lineto code={
        \path [#1]
        (\tikzinputsegmentfirst) -- (\tikzinputsegmentlast);
      },
      curveto code={
        \path [#1] (\tikzinputsegmentfirst)
        .. controls
        (\tikzinputsegmentsupporta) and (\tikzinputsegmentsupportb)
        ..
        (\tikzinputsegmentlast);
      },
      closepath code={
        \path [#1]
        (\tikzinputsegmentfirst) -- (\tikzinputsegmentlast);
      },
    },
  },
  mid arrow/.style={postaction={decorate,decoration={
        markings,
        mark=at position .5 with {\arrow[#1]{>}}
      }}}
      ,
}
\tikzset{
  on each segment/.style={
    decorate,
    decoration={
      show path construction,
      moveto code={},
      lineto code={
        \path [#1]
        (\tikzinputsegmentfirst) -- (\tikzinputsegmentlast);
      },
      curveto code={
        \path [#1] (\tikzinputsegmentfirst)
        .. controls
        (\tikzinputsegmentsupporta) and (\tikzinputsegmentsupportb)
        ..
        (\tikzinputsegmentlast);
      },
      closepath code={
        \path [#1]
        (\tikzinputsegmentfirst) -- (\tikzinputsegmentlast);
      },
    },
  },
  mid arrow d/.style={postaction={decorate,decoration={
        markings,
        mark=at position .5 with {\arrow[#1]{>>}}
      }}}
      ,
}
\tikzset{
  on each segment/.style={
    decorate,
    decoration={
      show path construction,
      moveto code={},
      lineto code={
        \path [#1]
        (\tikzinputsegmentfirst) -- (\tikzinputsegmentlast);
      },
      curveto code={
        \path [#1] (\tikzinputsegmentfirst)
        .. controls
        (\tikzinputsegmentsupporta) and (\tikzinputsegmentsupportb)
        ..
        (\tikzinputsegmentlast);
      },
      closepath code={
        \path [#1]
        (\tikzinputsegmentfirst) -- (\tikzinputsegmentlast);
      },
    },
  },
  mid arrow s/.style={postaction={decorate,decoration={
        markings,
        mark=at position .5 with {\arrow[#1]{stealth}}
      }}}
      ,
}
\tikzset{
  on each segment/.style={
    decorate,
    decoration={
      show path construction,
      moveto code={},
      lineto code={
        \path [#1]
        (\tikzinputsegmentfirst) -- (\tikzinputsegmentlast);
      },
      curveto code={
        \path [#1] (\tikzinputsegmentfirst)
        .. controls
        (\tikzinputsegmentsupporta) and (\tikzinputsegmentsupportb)
        ..
        (\tikzinputsegmentlast);
      },
      closepath code={
        \path [#1]
        (\tikzinputsegmentfirst) -- (\tikzinputsegmentlast);
      },
    },
  },
  mid arrow l/.style={postaction={decorate,decoration={
        markings,
        mark=at position .5 with {\arrow[#1]{latex}}
      }}}
      ,
}
\newcommand{\C}{\mathbb{C}}
\newcommand{\R}{\mathbb{R}}
\newcommand{\Q}{\mathbb{Q}}
\newcommand{\Z}{\mathbb{Z}}
\newcommand{\B}{\mathsf{b}}
\newcommand{\sarrow}{\,\mathbin{\rotatebox[origin=c]{90}{$\rightarrow$}}}
\newcommand{\darrow}{\,\mathbin{\rotatebox[origin=c]{90}{$\twoheadrightarrow$}}}
\renewcommand{\setminus}{{\smallsetminus}}
\numberwithin{equation}{section}
\def\Vol{\operatorname{Vol}}
\def \CC{\mathbb C}
\def \Re{\operatorname{Re}}
\def \arg{\operatorname{arg}}
\def\SS{{\mathbb S}}
\def\CC{{\mathbb C}}
\def\RR{{\mathbb R}}
\def\ZZ{{\mathbb Z}}
\def\QQ{{\mathbb Q}}
\def\Vol{\operatorname{Vol}}
\def\CS{\operatorname{CS}}
\def \Re{\operatorname{Re}}
\def \arg{\operatorname{arg}}
\def \Li{\operatorname{Li_2}}
\def \Log{\operatorname{Log}}
\def \BLog{\operatorname{\textbf{Log}}}
\def \Arg{\operatorname{Arg}}
\def \det{\operatorname{det}}
\def \Hess{\operatorname{Hess}}
\title[The Andersen--Kashaev volume conjecture for FAMED geometric triangulations]{The Andersen--Kashaev volume conjecture for \\ FAMED geometric triangulations}
\author{Fathi Ben Aribi and Ka Ho Wong}
\date{\today}
\address{Sorbonne Université, Université Paris Cité, CNRS, IMJ-PRG, F-75005 Paris, France}
\email{benaribi@imj-prg.fr}
\address{
	Yale University, New Haven, CT06520, USA}
\email{kaho.wong@yale.edu}
\begin{document}

\maketitle

\begin{abstract} 

We investigate the Andersen--Kashaev volume conjecture by introducing the notion of FAMED triangulations, a class of ideal triangulations of $3$-manifolds satisfying certain specific combinatorial properties. For any FAMED triangulation of a one-cusped hyperbolic $3$-manifold $M$ with trivial second homology, we prove the existence of the Jones function in the Teichm\"uller TQFT of $M$. 

For FAMED geometric triangulations of $M$, we establish an asymptotic expansion of the Jones function in terms of the Neumann--Zagier potential function and the 1-loop invariant of Dimofte--Garoufalidis. As a consequence, we prove the Andersen--Kashaev volume conjecture for $M$ and provide new insights for the AJ conjecture for the Teichmüller TQFT developed by Andersen--Malusa. We further discover a new phenomenon: for FAMED geometric triangulations, the partition function in Teichm\"uller TQFT decays exponentially with decrease rate the hyperbolic volume of a cone structure determined by the prescribed angle structure. This perspective provides a potential application to the Casson conjecture on angle structures.

Expanding the previous result of Guéritaud, Piguet-Nakazawa and the first author and complementing a parallel result of Guilloux and both authors, we prove all the above generalizations of the Andersen–Kashaev volume conjecture for every hyperbolic twist knot and for the first 42,000 hyperbolic knots in $\mathbb{S}^3$.
\end{abstract}

\tableofcontents

\section{Introduction}\label{sec:introduction}

Volume conjectures expect that certain asymptotics of quantum invariants of hyperbolic knots reveal their hyperbolic volume. The ways to prove such conjectures lie at the interface of topology, combinatorics, hyperbolic geometry, quantum algebra and asymptotic analysis.

For a one-cusped hyperbolic $3$-manifold $M$ with trivial second homology, endowed with a triangulation $X$ with an angle structure $\alpha$, the partition function $\mathscr{Z}_\hbar(X,\alpha)$ of the Andersen--Kashaev TQFT (or Teichmüller TQFT, see Section \ref{sub:AK:TQFT} and \cite{AK}) is expected in the Andersen--Kashaev volume conjecture to manifest an exponential decrease with rate the hyperbolic volume of $M$ when $\hbar \to 0^+$.
In this paper, we generalize the Andersen--Kashaev volume conjecture and we prove it for all triangulations of hyperbolic knot complements which satisfy certain combinatorial and geometric properties (namely FAMED geometric triangulations, see Section \ref{sub:intro:FAMED}). 

In particular, the subtle algebraic simplifications in the partition function of the TQFT and the technical "saddle point" constraints of asymptotic analysis now directly follow from the fact that a geometric triangulation of the knot complement satisfies \textit{combinatorial} properties... which are easy to check! As evidence of this, we mention in Section \ref{sub:intro:computer} that such triangulations were found in \cite{BAGW} for more than 42,000 knots via the computer, which provided as many new examples of knots satisfying the Andersen--Kashaev volume conjecture.

We hope parts of this work can be extended to the study of volume conjectures for other quantum invariants, for which the analytic parts of the proofs are notoriously difficult.

\subsection{FAMED ideal triangulation and volume conjectures}\label{sub:intro:FAMED}
To state our main results, we briefly discuss the definitions of several matrices that will be used to construct the Teichm\"uller TQFT invariants of \cite{AK}. 
In this paper, we consider $3$-manifolds with one toroidal boundary component and trivial second homology, notably complements of hyperbolic knots in $\SS^3$.

Recall that a $3$-dimensional triangulation is \textit{ordered} (or \textit{branched}) if every tetrahedron is endowed with an order on its four vertices such that face gluings respect the relative vertex order (or equivalently, if the edges can be oriented so that no face is a $3$-cycle). See Figure \ref{fig:41:face:matrices} for an example we will develop in detail.

Let $M$ be a $3$-manifold with one toroidal boundary component.
Let $X$ be an ordered ideal triangulation of $M$ with $N$ tetrahedra $X^3 = \{T_1,\dots, T_N\}$.
We denote the \textit{sign of $T_j$} as $\varepsilon(T_j) = \pm 1$ and define it as $+1$ if the vectors $\overrightarrow{01}, \overrightarrow{02}, \overrightarrow{03}$ in this order follow the right-hand rule, and $-1$ otherwise (see Figure \ref{fig:41:face:matrices}).
We define $\mathcal{E}=\mathcal{E}(X)$ to be the diagonal matrix of size $N$ encoding the signs $\pm 1$ of the tetrahedra.
Note that the sign of a tetrahedron depends on how it is embedded in $3$-space, but once you fix the sign of one tetrahedron, all the other signs are uniquely determined by the face gluings. Hence $\mathcal{E}(X)$ is either fixed uniquely if $X$ is realised in $3$-space, or defined up to multiplication by $\pm 1$ if $X$ is considered abstractly. 

Let $X^2$ be the set of faces of $X$, of cardinal $2N$. For $k=0,1,2,3$, let $x_k : X^3 \to X^2$ be the map defined by sending a tetrahedron to its face that is opposite to the $k$-th vertex, and $\mathcal{X}_k \in M_{N,2N}(\Z)$ the matrix of coefficients
$(\mathcal{X}_k)_{i,j}:=\delta_{j\text{-th face},x_k(T_i)}$. Finally, define matrices $\mathcal{B}:=\begin{pmatrix}
    0_N \\ \mathcal{E}
\end{pmatrix} \in M_{2N,N}(\Z)$
and $\mathcal{A} = \begin{pmatrix}
    \mathcal{X}_0-\mathcal{X}_1+\mathcal{X}_2\\\mathcal{X}_2-\mathcal{X}_3
\end{pmatrix} \in M_{2N,2N}(\Z)$. We informally call these matrices "face adjacency matrices" associated to the ordered triangulation $X$.

As an example we hope will be enlightening, consider Thurston's triangulation of the figure-eight knot complement $M=S^3 \setminus 4_1$, drawn in Figure \ref{fig:41:face:matrices}, with $N=2$ tetrahedra. It is naturally an ordered triangulation, as seen in the picture. The $3$-cells form the set $X^3=\{T_+,T_-\}$, are drawn in purple and have respective signs $1$ and $-1$. The $2$-cells form the set $X^2=\{\mathsf{A},\mathsf{B},\mathsf{C},\mathsf{D}\}$ and are drawn in blue. The face boundary maps $x_k$ are given by:
 \begin{align*}
 x_0(T_+) = \mathsf{B},\ \ x_1(T_+) = \mathsf{D},\ \ x_2(T_+) = \mathsf{C},\ \ x_3(T_+) = \mathsf{A},\\
 x_0(T_-) = \mathsf{C},\ \ x_1(T_-) = \mathsf{A},\ \ x_2(T_-) = \mathsf{B},\ \ x_3(T_-) = \mathsf{D}.
 \end{align*}
 Remark that the faces $\mathsf{A}$ and $\mathsf{D}$ were swapped between \cite[Figure 2]{BAGPN} and Figure \ref{fig:41:face:matrices}.
The $1$-cells form the set $X^1=\{\rightarrow,\twoheadrightarrow\}$, respectively drawn in orange and pink, each arrow having six pre-images. Finally, the set of $0$-cells $X^0$ of the triangulation $X$ is a singleton (with eight pre-image vertices), drawn in teal, representing the knot $4_1$ collapsed to a point that is deleted after (small teal triangles around the vertices represent truncated vertices). The shape parameters $z^{\cdot}_\pm$ are drawn in red (see below).
Figure \ref{fig:41:face:matrices} also describes the matrices $\mathcal{X}_0, \mathcal{X}_1, \mathcal{X}_2, \mathcal{X}_3, \mathcal{A}, \mathcal{B}, \mathcal{E}$.

\begin{figure}[!h]
    \centering

\begin{tikzpicture}

\begin{scope}[scale=1.2]

\begin{scope}[xshift=0cm,yshift=0cm,rotate=0,scale=1.2]

\draw[color=violet] (-0.9,1) node{T$_+$} ;

\draw[color=red] (0.2,0.5) node{\tiny $z_+$} ;
\draw[color=red] (0,-1.2) node{\tiny $z_+$} ;
\draw[color=red] (0.5,-0.1) node{\tiny $z''_+$} ;
\draw[color=red] (-1.1,0.5) node{\tiny $z''_+$} ;
\draw[color=red] (-0.55,-0.1) node{\tiny $z'_+$} ;
\draw[color=red] (1.1,0.5) node{\tiny $z'_+$} ;

\draw (0,-0.2) node{\tiny $0$} ;
\draw (0,1.2) node{\tiny $1$} ;
\draw (1,-0.6) node{\tiny $2$} ;
\draw (-1,-0.6) node{\tiny $3$} ;

\begin{scope}[xshift=0cm,yshift=0cm,rotate=0,scale=.1]
    \draw[color=teal,thick] (0,2)--(-1.732,-1)--(1.732,-1)--(0,2);
\end{scope}

\begin{scope}[xshift=0cm,yshift=1cm,rotate=180,scale=.1]
    \draw[color=teal,thick ] (-1.3,0)--(0,2)--(1.3,0);
    \draw[color=teal,thick] (-1.3,0) arc (-150:-30:1.5);
\end{scope}

\begin{scope}[xshift=-0.86cm,yshift=-.5cm,rotate=-60,scale=.1]
    \draw[color=teal,thick ] (-1.3,0)--(0,2)--(1.3,0);
    \draw[color=teal,thick] (-1.3,0) arc (-150:-30:1.5);
\end{scope}

\begin{scope}[xshift=0.86cm,yshift=-.5cm,rotate=60,scale=.1]
    \draw[color=teal,thick ] (-1.3,0)--(0,2)--(1.3,0);
    \draw[color=teal,thick] (-1.3,0) arc (-150:-30:1.5);
\end{scope}

\draw[color=blue] (0.9,0.9) node{\small $\mathsf{\color{blue}B}$} ;
\draw[color=blue] (0,-0.6) node{\small $\mathsf{\color{blue}D}$} ;
\draw[color=blue] (-0.5,0.3) node{\small $\mathsf{\color{blue}C}$} ;
\draw[color=blue] (0.5,0.3) node{\small $\mathsf{\color{blue}A}$} ;

\path [draw=black,postaction={on each segment={mid arrow =orange, thick}}]
(0,0)--(-1.732/2,-0.5);

\path [draw=black,postaction={on each segment={mid arrow =orange, thick}}]
(0,0)--(0,1);

\path [draw=black,postaction={on each segment={mid arrow d =magenta, thick}}]
(0,0)--(1.732/2,-0.5);

\draw(1.732/2,-0.5) arc (-30:-89.5:1);
\draw[->,color=orange, thick](0,-1) arc (-89.5:-90:1);
\draw (0,-1) arc (-90:-150:1);

\draw(0,1) arc (90:29.5:1);
\draw[->>,color=magenta, thick](1.732/2,0.5) arc (29.5:30:1);
\draw (1.732/2,0.5) arc (30:-30:1);

\draw(0,1) arc (90:150:1);
\draw[->>,color=magenta, thick](-1.732/2,0.5) arc (149.5:150:1);
\draw (-1.732/2,0.5) arc (150:210:1);

\end{scope}

\begin{scope}[xshift=3.8cm,yshift=0cm,rotate=0,scale=1.2]

\draw[color=violet] (0.9,1) node{T$_-$} ;

\draw[color=red] (0.2,0.5) node{\tiny $z_-$} ;
\draw[color=red] (0,-1.2) node{\tiny $z_-$} ;
\draw[color=red] (0.5,-0.1) node{\tiny $z''_-$} ;
\draw[color=red] (-1.1,0.5) node{\tiny $z''_-$} ;
\draw[color=red] (-0.55,-0.1) node{\tiny $z'_-$} ;
\draw[color=red] (1.1,0.5) node{\tiny $z'_-$} ;

\draw (0,-0.2) node{\tiny $0$} ;
\draw (0,1.2) node{\tiny $1$} ;
\draw (1,-0.6) node{\tiny $3$} ;
\draw (-1,-0.6) node{\tiny $2$} ;

\begin{scope}[xshift=0cm,yshift=0cm,rotate=0,scale=.1]
    \draw[color=teal,thick] (0,2)--(-1.732,-1)--(1.732,-1)--(0,2);
\end{scope}

\begin{scope}[xshift=0cm,yshift=1cm,rotate=180,scale=.1]
    \draw[color=teal,thick ] (-1.3,0)--(0,2)--(1.3,0);
    \draw[color=teal,thick] (-1.3,0) arc (-150:-30:1.5);
\end{scope}

\begin{scope}[xshift=-0.86cm,yshift=-.5cm,rotate=-60,scale=.1]
    \draw[color=teal,thick] (-1.3,0)--(0,2)--(1.3,0);
    \draw[color=teal,thick] (-1.3,0) arc (-150:-30:1.5);
\end{scope}

\begin{scope}[xshift=0.86cm,yshift=-.5cm,rotate=60,scale=.1]
    \draw[color=teal,thick ] (-1.3,0)--(0,2)--(1.3,0);
    \draw[color=teal,thick] (-1.3,0) arc (-150:-30:1.5);
\end{scope}

\draw[color=blue] (-0.9,0.9) node{\small $\mathsf{\color{blue}C}$} ;
\draw[color=blue] (0,-0.6) node{\small $\mathsf{\color{blue}A}$} ;
\draw[color=blue] (-0.5,0.3) node{\small $\mathsf{\color{blue}D}$} ;
\draw[color=blue] (0.5,0.3) node{\small $\mathsf{\color{blue}B}$} ;

\path [draw=black,postaction={on each segment={mid arrow =orange, thick}}]
(0,0)--(-1.732/2,-0.5);

\path [draw=black,postaction={on each segment={mid arrow d =magenta, thick}}]
(0,0)--(0,1);

\path [draw=black,postaction={on each segment={mid arrow d =magenta, thick}}]
(0,0)--(1.732/2,-0.5);

\draw(1.732/2,-0.5) arc (-30:-89.5:1);
\draw[<<-,color=magenta, thick] (0,-1) arc (-89.5:-90:1);
\draw(0,-1) arc (-90:-150:1);

\draw(0,1) arc (90:29.5:1);
\draw[->,color=orange, thick](1.732/2,0.5) arc (29.5:30:1);
\draw (1.732/2,0.5) arc (30:-30:1);

\draw(0,1) arc (90:149.5:1);
\draw[->,color=orange, thick](-1.732/2,0.5) arc (149.5:150:1);
\draw (-1.732/2,0.5) arc (150:210:1);

\end{scope}

\end{scope}

\end{tikzpicture}

\begin{tikzpicture}

\draw (0,0) node {
$\mathcal{X}_0=\begin{pNiceMatrix}[first-row, first-col]
 &  \mathsf{\color{blue}A} & \mathsf{\color{blue}B} & \mathsf{\color{blue}C} & \mathsf{\color{blue}D} \\
\textcolor{violet}{T_+} & 0 & 1 & 0 & 0 \\
\textcolor{violet}{T_-} & 0 & 0 & 1 & 0
\end{pNiceMatrix}$
};

\draw (0,-1.4) node {
$\mathcal{X}_1=\begin{pNiceMatrix}[first-row, first-col]
 &  \mathsf{\color{blue}A} & \mathsf{\color{blue}B} & \mathsf{\color{blue}C} & \mathsf{\color{blue}D} \\
\textcolor{violet}{T_+} & 0 & 0 & 0 & 1 \\
\textcolor{violet}{T_-} & 1 & 0 & 0 & 0
\end{pNiceMatrix}$};

\draw (0,-2.8) node {
$\mathcal{X}_2=\begin{pNiceMatrix}[first-row, first-col]
 &  \mathsf{\color{blue}A} & \mathsf{\color{blue}B} & \mathsf{\color{blue}C} & \mathsf{\color{blue}D} \\
\textcolor{violet}{T_+} & 0 & 0 & 1 & 0 \\
\textcolor{violet}{T_-} & 0 & 1 & 0 & 0
\end{pNiceMatrix}$};

\draw (0,-4.2) node {
$\mathcal{X}_3=\begin{pNiceMatrix}[first-row, first-col]
 &  \mathsf{\color{blue}A} & \mathsf{\color{blue}B} & \mathsf{\color{blue}C} & \mathsf{\color{blue}D} \\
\textcolor{violet}{T_+} & 1 & 0 & 0 & 0 \\
\textcolor{violet}{T_-} & 0 & 0 & 0 & 1
\end{pNiceMatrix}$};

\draw (7,-.5) node {
$\mathcal{A} = \begin{pmatrix}
    \mathcal{X}_0-\mathcal{X}_1+\mathcal{X}_2\\\mathcal{X}_2-\mathcal{X}_3
\end{pmatrix}= \begin{pmatrix}
0&1&1&-1\\
-1&1&1&0\\
-1&0&1&0\\
0&1&0&-1
\end{pmatrix}$};

\draw (5,-1.6-.5) node {
$\mathcal{B}= \begin{pmatrix}
0&0\\
0&0\\
1&0\\
0& -1
\end{pmatrix}$};

\draw (7,-3.2-.5) node {
$\mathcal{E}=\begin{pmatrix}
    1&0\\0&-1
\end{pmatrix}$ (matrix of {tetrahedra signs})};

\end{tikzpicture}

\caption{Thurston's triangulation of $M=S^3 \setminus 4_1$, and the face adjacency matrices}
    \label{fig:41:face:matrices}
\end{figure}

 Observe that our example satisfies the following property:
 \begin{center}
      \framebox{{\bf Property I}: $\mathcal{A}$ is invertible. }
 \end{center}
 Denote $\cdot^{\!\top}$ the matrix transpose.
If Property I is satsified, then we can define the  symmetric square matrix of size $N$
$$\mathcal{X}_0 \mathcal{A}^{-1} \mathcal{B} + \left ( \mathcal{X}_0 \mathcal{A}^{-1} \mathcal{B} \right )^{\!\top} + \frac{\mathcal{E}+{\rm Id}_N}{2} \in \mathcal{Sym}_N(\Q),$$
which is equal to $\begin{pmatrix}
    -1 & 0 \\ 0 &2
\end{pmatrix}$ in the example of Figure \ref{fig:41:face:matrices}.

We are now going to relate this symmetric matrix with the Neumann-Zagier matrices with respect to 
a specific peripheral curve.

Recall that $X^1=\{E_1,\dots, E_N\}$ is the set of 1-cells (edges) of the ideal triangulation $X$. It is known that for a $3$-manifold with one cusp such as a knot complement, there exists a set of $N-1$ independent edges, in the sense that if the edge equations of these edges are satisfied, the last edge equation will automatically be satisfied. Furthermore, for an ordered triangulation, there is a canonical assignment of shape parameters $z_k,z_k',z_k''$ to the edges of the $k$-th tetrahedron $T_k$. To be precise, we always assign $z_k$ to the edge with endpoints 0 and 1 and $z_k,z_k',z_k''$ have to cycle positively around each vertex in this order.  

Now we fix a peripheral curve $l$ (for a knot complement this will be the preferred longitude, which is null-homologous in the knot complement). Let $\xi \in\CC$ be a complex number. We consider the set of $N$ equations on logarithmic shape parameters $\BLog \mathbf{z},\BLog \mathbf{z'},\BLog \mathbf{z''}$ (where $\BLog \mathbf{z^{\cdot}}$ is the vector of parameters $\Log(z^{\cdot}_k)$) given by
\begin{itemize}
    \item the first $N-1$ gluing equations coming from edges of $X^1$ and
    \item the holonomy equation
    $\mathrm{H}^\C_{X,l}(\mathbf{z}) = \xi$.
\end{itemize}
This system of equations can be written
\begin{align}\label{equ}
 \mathbf{G} \BLog \mathbf{z} +\mathbf{G'} \BLog \mathbf{z'} + \mathbf{G''} \BLog \mathbf{z''} =
\begin{pmatrix}
    2i\pi \\ \ldots \\ 2i\pi \\ 
    \xi 
\end{pmatrix},
\end{align}
where $\mathbf{G},\mathbf{G'},\mathbf{G''} \in M_N(\Z)$ are the \textit{Neumann-Zagier matrices} associated to the previous numbering of edges and the choice of curve $l$.

Getting back to the example of $S^3 \setminus 4_1$, let $\nu(K)$ denote a tubular neighborhood of $K$ in $S^3$. Figure \ref{fig:41:cusp:triang:NZ} shows the cusp triangulation of the torus $\partial \nu(K) = \partial \left ( S^3 \setminus \nu(K)\right )$ obtained from the triangulation of Figure \ref{fig:41:face:matrices} by truncating the vertices. Gluing equations associated to the orange simple arrow and the pink double arrow can be read on the cusp triangulation by going around a vertex (which represents a truncated edge) and are detailed below the picture. The same can be done for the holonomy equations associated to the meridian $y$ or the preferred longitude $l=x+2y$, which is the chosen curve here. The associated Neumann-Zagier matrices $\mathbf{G},\mathbf{G'},\mathbf{G''}$ are then detailed.

\begin{figure}[!h]
    \centering

\begin{tikzpicture}

\begin{scope}[scale=1.2]

\draw (0,0)--(8,0)--(9,1.732)--(1,1.732)--(0,0);
\draw (1,1.732)--(2,0)--(3,1.732)--(4,0)--(5,1.732)--(6,0)--(7,1.732)--(8,0);

\filldraw[color=magenta] (0,0) circle (0.08cm);
\filldraw[color=magenta] (1,1.732) circle (0.08cm);
\filldraw[color=magenta] (0+8,0) circle (0.08cm);
\filldraw[color=magenta] (1+8,1.732) circle (0.08cm);
\draw[color=magenta,very thick] (0,0) circle (0.15cm);
\draw[color=magenta, very thick] (1,1.732) circle (0.15cm);
\draw[color=magenta,very thick] (8,0) circle (0.15cm);
\draw[color=magenta, very thick] (9,1.732) circle (0.15cm);
\filldraw[color=orange] (6,0) circle (0.08cm);
\filldraw[color=orange] (7,1.732) circle (0.08cm);
\draw[color=orange,very thick] (2,0) circle (0.08cm);
\draw[color=orange, very thick] (3,1.732) circle (0.08cm);
\draw[color=magenta,very thick] (4,0) circle (0.08cm);
\draw[color=magenta, very thick] (5,1.732) circle (0.08cm);
\draw[color=magenta,very thick] (4,0) circle (0.15cm);
\draw[color=magenta, very thick] (5,1.732) circle (0.15cm);

\draw[color=teal] (1,.6) node{$2_+$};
\draw[color=teal] (3,.6) node{$0_+$};
\draw[color=teal] (5,.6) node{$1_+$};
\draw[color=teal] (7,.6) node{$3_+$};
\draw[color=teal] (2,1.2) node{$1_-$};
\draw[color=teal] (4,1.2) node{$0_-$};
\draw[color=teal] (6,1.2) node{$2_-$};
\draw[color=teal] (8,1.2) node{$3_-$};

\draw[color=blue] (1,0) node{$\mathsf{\color{blue}D}$};
\draw[color=blue] (3,0) node{$\mathsf{\color{blue}D}$};
\draw[color=blue] (5,0) node{$\mathsf{\color{blue}C}$};
\draw[color=blue] (7,0) node{$\mathsf{\color{blue}C}$};
\draw[color=blue] (.5,.9) node{$\mathsf{\color{blue}A}$};
\draw[color=blue] (1.5,.9) node{$\mathsf{\color{blue}B}$};
\draw[color=blue] (2.5,.9) node{$\mathsf{\color{blue}C}$};
\draw[color=blue] (3.5,.9) node{$\mathsf{\color{blue}A}$};
\draw[color=blue] (4.5,.9) node{$\mathsf{\color{blue}B}$};
\draw[color=blue] (5.5,.9) node{$\mathsf{\color{blue}A}$};
\draw[color=blue] (6.5,.9) node{$\mathsf{\color{blue}D}$};
\draw[color=blue] (7.5,.9) node{$\mathsf{\color{blue}B}$};
\draw[color=blue] (8.5,.9) node{$\mathsf{\color{blue}A}$};
\draw[color=blue] (2,1.732) node{$\mathsf{\color{blue}D}$};
\draw[color=blue] (4,1.732) node{$\mathsf{\color{blue}D}$};
\draw[color=blue] (6,1.732) node{$\mathsf{\color{blue}C}$};
\draw[color=blue] (8,1.732) node{$\mathsf{\color{blue}C}$};

\draw[color=red] (.35,.2) node{\scriptsize $z''_+$};
\draw[color=red] (2.35,.2) node{\scriptsize $z'_+$};
\draw[color=red] (4.35,.2) node{\scriptsize $z''_+$};
\draw[color=red] (6.35,.2) node{\scriptsize $z'_+$};
\draw[color=red] (2-.35,.2) node{\scriptsize $z_+$};
\draw[color=red] (4-.35,.2) node{\scriptsize $z''_+$};
\draw[color=red] (6-.35,.2) node{\scriptsize $z_+$};
\draw[color=red] (8-.35,.2) node{\scriptsize $z''_+$};
\draw[color=red] (2,.45) node{\scriptsize $z'_-$};
\draw[color=red] (4,.45) node{\scriptsize $z''_-$};
\draw[color=red] (6,.45) node{\scriptsize $z'_-$};
\draw[color=red] (8,.45) node{\scriptsize $z''_-$};
\draw[color=red] (1,1.732-.45) node{\scriptsize $z'_+$};
\draw[color=red] (3,1.732-.45) node{\scriptsize $z_+$};
\draw[color=red] (5,1.732-.45) node{\scriptsize $z'_+$};
\draw[color=red] (7,1.732-.45) node{\scriptsize $z_+$};
\draw[color=red] (1+.35,1.732-.2) node{\scriptsize $z_-$};
\draw[color=red] (3+.35,1.732-.2) node{\scriptsize $z'_-$};
\draw[color=red] (5+.35,1.732-.2) node{\scriptsize $z_-$};
\draw[color=red] (7+.35,1.732-.2) node{\scriptsize $z'_-$};
\draw[color=red] (3-.35,1.732-.2) node{\scriptsize $z''_-$};
\draw[color=red] (5-.35,1.732-.2) node{\scriptsize $z_-$};
\draw[color=red] (7-.35,1.732-.2) node{\scriptsize $z''_-$};
\draw[color=red] (9-.35,1.732-.2) node{\scriptsize $z_-$};

\begin{scope}[xshift=4.2cm,yshift=2.3cm,rotate=-120]
\draw[color=olive,->] (0,0)--(3.2,0);    
\draw[color=olive] (3.5,0) node {$y$};
\end{scope}

\begin{scope}[xshift=0cm,yshift=1.075cm,rotate=0]
\draw[color=olive,->] (0,0)--(9.5,0);    
\draw[color=olive] (9.7,0) node {$x$};
\end{scope}

\end{scope}

\end{tikzpicture}

$(\textcolor{orange}{\rightarrow})$
starts at
\begin{tikzpicture}
\draw[color=orange, very thick] (0,0) circle (0.08cm);
\end{tikzpicture}
and ends at
\begin{tikzpicture}
\filldraw[color=orange] (0,0) circle (0.08cm);
\end{tikzpicture}
\ : \color{red}
$2 {\rm Log}(z_+)+{\rm Log}(z'_+)+2{\rm Log}(z'_-)+{\rm Log}(z''_-)=2i\pi$
\color{black}

$(\textcolor{magenta}{\twoheadrightarrow})$
starts at
\begin{tikzpicture}
\draw[color=magenta,very thick] (0,0) circle (0.08cm);
\draw[color=magenta,very thick] (0,0) circle (0.15cm);
\end{tikzpicture}
and ends at
\begin{tikzpicture}
\filldraw[color=magenta] (0,0) circle (0.08cm);
\draw[color=magenta,very thick] (0,0) circle (0.15cm);
\end{tikzpicture}
\ : \color{red}
$ {\rm Log}(z'_+)+2{\rm Log}(z''_+)+2{\rm Log}(z_-)+{\rm Log}(z''_-)=2i\pi$
\color{black}

\vspace{.2cm}

\textcolor{olive}{Meridian $y$} : \ 
\color{red}
$\mathrm{H}^\C_{X,y}(\mathbf{z})= -{\rm Log}(z'_-)+{\rm Log}(z''_+)$
\color{black}

\textcolor{olive}{\textbf{Preferred longitude} $x+2y$} : \ 
\color{red}
$ \mathrm{H}^\C_{X,x+2y}(\mathbf{z})=2i\pi -4{\rm Log}(z'_-)-2{\rm Log}(z''_-)$
\color{black}

\ 

$\mathbf{G}=\begin{pNiceMatrix}[first-row, first-col]
 &  {\color{red}z_+} & {\color{red}z_-} \\
{(\color{orange}\to\color{black})} & 2 & 0  \\
{\color{olive}x+2y} & 0 & 0 
\end{pNiceMatrix}, \ \ \
\mathbf{G'}=\begin{pNiceMatrix}[first-row, first-col]
 &  {\color{red}z'_+} & {\color{red}z'_-} \\
{(\color{orange}\to\color{black})} & 1 & 2  \\
{\color{olive}x+2y} & 0 & -4 
\end{pNiceMatrix}, \ \ \ 
\mathbf{G''}=\begin{pNiceMatrix}[first-row, first-col]
 &  {\color{red}z''_+} & {\color{red}z''_-} \\
{(\color{orange}\to\color{black})} & 0 & 1  \\
{\color{olive}x+2y} & 0 & -2 
\end{pNiceMatrix}$

    \caption{Cusp triangulation of $S^3 \setminus 4_1$, and the Neumann-Zagier matrices}
    \label{fig:41:cusp:triang:NZ}
\end{figure}

Now we define $\mathbf{A}:=\mathbf{G}-\mathbf{G'}$ and $\mathbf{B}:=\mathbf{G''}-\mathbf{G'}$ (see Section \ref{NZD}).

Observe that our example satisfies the following property: 
\begin{center}
    \framebox{{\bf Property II}: $\mathbf{B}$ is invertible and $\mathbf{B}^{-1} \mathbf{A} = \mathcal{X}_0 \mathcal{A}^{-1} \mathcal{B}  + \left ( \mathcal{X}_0 \mathcal{A}^{-1} \mathcal{B} \right )^{\!\top} + \frac{\mathcal{E}+{\rm Id}_N}{2}$. }
\end{center} 
This unexpected correspondence between a function of "face matrices" and one of "edge matrices" is actually at the heart of all the currently known proofs of the Andersen--Kashaev volume conjecture. We formalize this condition in the following definition.

\begin{definition} \label{def:FAMED}
Let $M$ be a compact $3$-manifold with one toroidal boundary component and trivial second homology, such as a knot exterior.
Let $X$ be an ordered ideal triangulation of $M$, let $\mathcal{A}$ be the face adjacency matrix associated to $X$ (as above).
Let $l\in \pi_1(\partial(M))$ be a peripheral curve (such as the preferred longitude of $K$ if $M=S^3 \setminus \nu(K)$). Let $\mathbf{A}, \mathbf{B}$ be the Neumann-Zagier matrices with respect to $l$ (as above). 
We say that $X$ is FAMED (for "Face Adjacency Matrices with Edge Duality") with respect to $l$ if 
\begin{enumerate}
\item the space of angle structures (see Section \ref{sub:angle:str}) is non empty: $\mathscr{A}_{X} \neq \emptyset$,
\item $\det \mathcal{A} \neq 0$, 
\item $\det \mathbf{B}\neq 0$, 
\item $\mathbf{B}^{-1}\mathbf{A} = \mathcal{X}_0 \mathcal{A}^{-1} \mathcal{B}  + \left ( \mathcal{X}_0 \mathcal{A}^{-1} \mathcal{B} \right )^{\!\top} + \frac{\mathcal{E}+{\rm Id}_N}{2}$. 
\end{enumerate}
\end{definition}
\begin{remark}
One should not confuse the face matrix $\mathcal{A}$ (associated to the triangulation $X$) with the space of angle structures $\mathcal{A}_X$ (nor with the face $\mathsf{A}$ nor the Neumann-Zagier matrix $\mathbf{A}$...). We tried to find a good compromise between readability and faithfulness to classical notations, and we hope the reader will find we succeeded.
\end{remark}
\begin{remark}
For simplicity, in the rest of this paper, "FAMED" will mean either "FAMED for some curve $l$", "FAMED for the curve $l$ we specified before", or 
"FAMED with respect to the preferred longitude $l$" in the case of a knot complement in $S^3$.
\end{remark}

Recall that $X$ is \textit{geometric} if there exist shape parameters $\mathbf{z}$ with positive imaginary parts that satisfy Equation (\ref{equ}) with $\xi = 0$.
In the example of Thurston's triangulation of $S^3 \setminus 4_1$, it is known that
$$\framebox{{\bf Property III}: This ideal triangulation is geometric.
}
$$

Properties I, II and III respectively allow us to show that
\begin{enumerate}[I:]
\item the equivalent, distributional-free definition of the Teichm\"uller TQFT invariant in \cite{BAGPN} holds (see Lemma \ref{lem:dirac} and Proposition \ref{Tpartiexpress1}); 
\item there is a natural correspondence between the critical point equations of the potential function and the hyperbolicity equations (see Proposition \ref{critThurscorrespondence}); and
\item the potential function has a critical point in its domain of definition.
\end{enumerate}

It is not difficult to construct triangulations that are not geometric, and the same is true for the FAMED condition. Naturally, we can ask the following questions:

\begin{question}
\begin{enumerate}
    \item Does every one-cusped hyperbolic $3$-manifold admit a FAMED geometric triangulation?
    \item (Weaker question:) Does every hyperbolic knot complement in $\mathbb{S}^3$ admit a FAMED geometric triangulation?
\end{enumerate}
\end{question}

A positive answer to the first question (resp. second question) conjugated with the results of the present paper would imply that \textit{the Andersen--Kashaev volume conjecture (Conjecture \ref{conj:vol:BAGPN}) holds for every one-cusped hyperbolic $3$-manifold (resp. hyperbolic knot complement in $\SS^3$)}.

In \cite{BAGW}, with the computer, we found a positive answer to this question (and thus a proof of the Andersen--Kashaev volume conjecture) for more than 42,000 hyperbolic knots, including all knots with 12 crossings or fewer, and almost all knots whose complement can be triangulated with 23 tetrahedra or fewer (see Section \ref{sub:intro:computer}). 
These results show that FAMED triangulations have a great potential for proving volume conjectures.

We can now present our three main results, Theorems \ref{thm:Jones:FAMED}, \ref{thm:AJ} and  \ref{mainthmZ}.

\subsection{Asymptotics of Jones functions}\label{sub:intro:jones}
Consider a one-cusped hyperbolic $3$-manifold $M$ with trivial second homology and endowed with angled ideal triangulation $(X,\alpha)$ as before.
In \cite{AK,AKnew,AKicm}, it is conjectured that there exists a Jones function ${J}_{X}$, which is an analogue of the Kashaev invariant of a knot $K$ (if you think of $M$ as $S^3 \setminus K$), such that the Andersen--Kashaev TQFT is roughly a Laplace transform of this Jones function, and such that this Jones function asymptotically yields the hyperbolic volume.  More precisely, this \textbf{Andersen--Kashaev volume conjecture} can be stated as follows:

\begin{conjecture}[see \cite{AK}, Conjecture 1.(1)\&(3) and \cite{BAGPN} Conjecture 2.13] \label{conj:vol:BAGPN}
Let $\mathcal{N}$ be a connected closed oriented $3$-manifold and let $K \subset \mathcal{N}$ be a hyperbolic knot whose complement has trivial second homology. 
There exist an ideal triangulation $X$ of $M = \mathcal{N} \setminus K$ and a Jones function $J_X\colon \R_{>0} \times \mathcal{U} \to \C$ (where $\mathcal{U}\subset \C$ is an open horizontal band in $\C$) such that 
 the following properties hold:
\begin{enumerate}
\item   There exist a linear combination of dihedral angles $\lambda_X(\alpha)$ in $X$ such that for all angle structures $\alpha \in \mathcal{A}_{X}$ and all $\hbar>0$, we have:
\begin{equation*}
\left |\mathcal{Z}_{\hbar}(X,\alpha) \right |
= \left | 
\int_{\mathbb{R} +i\frac{\mu_X(\alpha)}{2\pi\sqrt{\hbar}}} 
J_{X}(\hbar,x)
e^{\frac{1}{2 \sqrt{\hbar}}  x  \lambda_{X}(\alpha)} 
dx \right |,
\end{equation*}
where $\mu_X(\alpha)$ is 
another linear combination of dihedral angles.
\item In the semi-classical limit $\hbar \to 0^+$, we retrieve the hyperbolic volume of $K$ as:
$$
\lim_{\hbar \to 0^+} 2\pi \hbar  \log \vert J_{X}(\hbar,0) \vert
= -\Vol(\mathcal{N}\backslash K).$$
\item \begin{enumerate}
    \item If there exists a simple closed curve $c$ on the boundary $\partial M$ that is trivial in the first homology of $M$ with integer coefficients, then $\lambda_X(\alpha)$ is the angular holonomy of this curve $c$.
    \item In particular,  $c$ is the preferred longitude of the knot $K$ when $\mathcal{N}=S^3$, and $c$ is the boundary of the fiber surface when $M$ is fibered over $S^1$.
    \item Moreover, $\mu_X(\alpha)$ is the angular holonomy of a meridian of the knot $K$ when $\mathcal{N}=S^3$, and $\mu_X(\alpha)$ is the angular holonomy of a curve following along the $S^1$ direction when $M$ is fibered over $S^1$.
\end{enumerate}
\end{enumerate}
\end{conjecture}

The original version of Conjecture \ref{conj:vol:BAGPN} in \cite{AK} only contained (1) and (2) where $\mu_X(\alpha)$ was instead $0$. This is due to the fact that a Cauchy argument allows to change the horizontal contour in the integral of (1); however, in \cite{BAGPN} the authors decided to emphasize that a very specific $\mu_X(\alpha)$ appeared naturally in the proof of (1) (we will expand on this in the following sections). The topological natures of $\lambda_X(\alpha)$ and $\mu_X(\alpha)$ stated in (3) were first mentioned in \cite{BAGPN} for knots in $S^3$. Since then, the results of \cite{BAG} on fibered manifolds now lead us to state (3) fully as above. The conjecture was originally stated for a pair $(N,K)$ instead of a knot complement $M$, notably because it included a statement on asymptotics of partition functions of H-triangulations of such pairs. In this paper, we did not cover the impact of the FAMED property to such asymptotics for H-triangulations, but we still chose to keep the point of view of pairs in the statement of Conjecture \ref{conj:vol:BAGPN}.

Conjecture \ref{conj:vol:BAGPN} was proven in \cite{AK} by Andersen and Kashaev for the first two hyperbolic knots $4_1$ and $5_2$, and in \cite{AN} by Andersen and Nissen for the next one $6_1$. A proof for all twist knots was done in \cite{BAGPN} by Guéritaud, Piguet-Nakazawa and the first author; this gave the first author intuition that a pure combinatorial proof was attainable, which led us to introduce the FAMED condition in this paper. Results of \cite{BAGPN} were extended to an infinite family of knots in lens spaces by Piguet-Nakazawa in \cite{PN}, to the knot $7_3$ by Uemura \cite{U}, and to once-punctured torus bundles by Guéritaud and the first author in \cite{BAG} (this last project was started before the current paper and fruitful exchanges have been made between both projects).

The Jones function is explicitly constructed for all hyperbolic twist knots in \cite{BAGPN}. Besides, due to the asymptotic properties of the quantum dilogarithm function (Proposition \ref{prop:quant:dilog} (3)), it is natural to consider the asymptotic of the invariant in the parameter $\B>0$ instead of $\hbar>0$ ($\B$ is defined such that $\hbar = \frac{1}{(\B+\B^{-1})^2}$). 

For FAMED ideal triangulations, our first main result (Theorem \ref{thm:Jones:FAMED}) provides the existence of the Jones function and its asymptotics. Let $\nu(K)$ be a tubular neighborhood of $K \subset N$. To state the theorem, given two elements $[\gamma_1],[\gamma_2] \in \mathrm{H}_1(\partial (\mathcal{N}\setminus \nu(K));\QQ)$, let $[\gamma_1]\cap [\gamma_2] \in \Q$ denote the algebraic intersection number of $[\gamma_1]$ and $[\gamma_2]$. 
\begin{theorem}
\label{thm:Jones:FAMED}
Let $M$ be a one-cusped hyperbolic $3$-manifold with trivial second homology
and let $X$ be an ordered ideal triangulation of 
$M$.
\begin{enumerate}
\item If $X$ is FAMED for a boundary curve $l$, then there exists a Jones function $\mathfrak{J}_X \colon \R_{>0} \times \mathcal{W} \to \C$ (where $\mathcal{W} \subset \C$ is an open horizontal band) such that for all angles structures $\alpha \in \mathcal{A}_X$ and all $\hbar>0$:
\begin{align*}
&|\mathscr{Z}_{\hbar}(X, \alpha)| 
= \left| 
\int_{\RR +i\mu_X(\alpha)} 
\mathfrak{J}_X(\hbar,\mathrm{x})
e^{\frac{\mathrm{x} \lambda_X(\alpha)}{4\pi {\hbar}}} d\mathrm{x} \right| ,
\end{align*}
where $\lambda_X(\alpha)=\mathrm{H}^\R_{X,l}(\alpha)$ is the angular holonomy of the 
curve $l$, and $\mu_X(\alpha)=\mathrm{H}^\R_{X,\hat{m}}(\alpha)$ is the angular holonomy of a curve $\hat{m}$ with algebraic intersection $1$ with $l$.
\item If $X$ is FAMED for $l$ and geometric, then there exist $q\in \QQ$, a peripheral curve $\hat{m}$ with $[\hat{m}]=[m+ql] \in \mathrm{H}_1(\partial M, \QQ)$ and a small neighborhood $\mathcal{O}\subset \CC$ around $0 \in \CC$ such that 
for any $\mathrm{x} \in \mathcal{O}$,
by considering $\mathbf{z}_\mathrm{x}$ the unique shape parameters with positive imaginary parts associated to the (possibly incomplete) hyperbolic structure satisfying $\mathrm{H}_{X,\hat{m}}^\C(\mathbf{z}) =\mathrm{x}$,
we have 
\begin{align*}
|\mathfrak{J}_{X}(\hbar, \mathrm{x})|
= \left|\frac{\exp\left(\frac{i}{\pi} R(\mathbf{z}_\mathrm{x})\right)}{2\pi {\sqrt{\hbar}} (\det \mathscr{A} \sqrt{ 4 \det\mathbf{B}^{-1}})} \cdot \frac{\exp\left(\frac{i}{2\pi {\hbar}}\phi_{\hat{m},l}(\mathrm{x})\right)}{\sqrt{\pm \tau(M, \hat{m}, X, \mathbf{z}_\mathrm{x})}}  \Big(1 + O_{{\hbar}\to 0^+}({\hbar})\Big)\right|,
\end{align*} 
where 
\begin{itemize}  
\item $\mathcal{A}$ is the face gluing matrix and $\mathbf{B}$ is the Neumann-Zagier matrix, both used in the definition of the FAMED condition,
\item $\phi_{\hat{m},l}: \mathcal{O} \to \CC$ is the Neumann-Zagier potential function associated to $\hat{m},l$ (see Section \ref{NZpotentintro}), i.e. the holomorphic function satisfying
$$
\frac{\partial \phi_{\hat{m},l}}{\partial w^{loc}_{\hat{m}}} = \frac{w^{loc}_l}{2} ,\quad \phi_{\hat{m},l}(0) = i(\Vol(M) + i \CS(M)),
$$ 
where $\CS(M)$ is the Chern--Simons invariant of $M$, 
\item $\tau(M, \hat{m}, X, \mathbf{z}_\mathrm{x})$ is the 1-loop invariant of $M$ associated to the shape parameters $\mathbf{z}_\mathrm{x}$ and $\hat{m}
\in \mathrm{H}_1(\partial M,\QQ)$ (see Section \ref{1loopsection} for details), and 
\item $R$ is some analytic function independent of $\hbar$ (see Proposition \ref{hvalue} for the definition). 
\end{itemize}
In particular, we have
$$
 \lim_{\hbar\to 0} 2\pi \hbar \log|\mathfrak{J}_{X}(\hbar, 0)|
= - \Vol(M).
$$
\end{enumerate}
\end{theorem}
\begin{remark}
The functions $\mathfrak{J}_X(\mathrm{x})$ in Theorem \ref{thm:Jones:FAMED} and $J_X(x)$ in Conjecture \ref{conj:vol:BAGPN} are related by a change of variables $\mathrm{x}=2\pi \sqrt{\hbar} x$. 
\end{remark}
\begin{remark}
The rational number $q$ in Theorem~\ref{thm:Jones:FAMED} (2) can be computed explicitly by using the Neumann-Zagier datum of the triangulation. See Lemma \ref{xmeaning} for details. For the triangulations $X_n$ for hyperbolic twist knot complements constructed in \cite{BAGPN}, we have $q=0$, i.e. $\hat{m}$ is indeed the meridian $m$ of the knot.
\end{remark}

Theorem \ref{thm:Jones:FAMED} will be proved in Section \ref{sec:Jones}.
As a corollary, the Andersen--Kashaev volume conjecture is true for all hyperbolic knot complements in $\SS^3$ which admit a FAMED geometric triangulation:

\begin{corollary}\label{AEFJXn}
Conjecture \ref{conj:vol:BAGPN} is true for all FAMED geometric triangulations, including the triangulation $X_n$ of hyperbolic twist knot complements constructed in \cite{BAGPN}.
\end{corollary}

\subsection{Jones functions and the AJ-conjecture}\label{sub:intro:AJ}

Let us digress about the AJ-conjecture before going back to asymptotics of the Andersen--Kashaev TQFT in the next section.

Based on Corollary \ref{AEFJXn}, we study the connection between the asymptotic expansion conjecture of the Jones functions, the annihilating polynomial of the Jones function and the classical A-polynomial of knots.
It would be interesting to compare this result
with Andersen--Malusa's version of the AJ conjecture on the Teichmüller TQFT stated in \cite[Conjecture 1]{AM}, which studies a general level $\mathfrak{N}$ Jones function (in this paper we restrict to the case $\mathfrak{N}=1$).

To be precise, let $K$ be an hyperbolic knot in $\SS^3$.
We consider the set of functions 
$$ \mathscr{F} = \{f \mid f: \RR_{>0} \times \CC \to \CC \}. $$
Let $\hat{M}, \hat{L}: \mathscr{F} \to \mathscr{F}$ be two operators defined by
$$ \hat{M}( f(\hbar, \mathrm{x}) ) = e^{\mathrm{x}} f(\hbar,  \mathrm{x}), \quad \hat{L}( f(\hbar,  \mathrm{x}) ) = f(\hbar,  \mathrm{x} + 4\pi i\hbar). $$
Let $q=e^{4\pi i \hbar}$. Note that $\hat{L}\hat{M} =  q\hat{M} \hat{L} $. In particular, $\hat{M}, \hat{L}$ define a quantum torus
$$ \mathscr{T} = \langle \hat{M}, \hat{L} \mid \hat{L}\hat{M} =  q\hat{M} \hat{L}  \rangle.$$
Let $A_K(M,L)$ be the $PSL(2;\CC)$ A-polynomial of $K$ (see Section \ref{PSL2CApoly} for a brief review).

Our second main result (Theorem \ref{thm:AJ}) provides a partial positive answer to a version of the AJ-conjecture for knot complements with FAMED geometric triangulations: if an annihilator polynomial exists for the Jones function, then an evaluation of it at $q=1$ is a multiple of the geometric component of the A-polynomial of the knot. A similar connection between the asymptotics of the relative Reshetikhin-Turaev invariants and their annihilators will be discussed in an another preprint by G. Belletti and the second author.

\begin{theorem}\label{thm:AJ}
Let $K$ be an hyperbolic knot in $\SS^3$ and $X$ be an ideal ordered triangulation of $\SS^3 \setminus K$.
Suppose there exists $\hat{A}_K(\hat{M}, \hat{L}, q) = \sum_{n=0}^d a_n(\hat{M},q) \hat{L}^n \in \ZZ[ \hat{M}^{\pm 1}, \hat{L}, q]$ such that 
$$ \hat{A}_K(\hat{M}, \hat{L}, q) (\mathfrak{J}_X) (\hbar, \mathrm{x})
= \sum_{n=0}^d a_n(\hat{M},q) (\mathfrak{J}_X(\hbar, \mathrm{x} +4\pi i n \hbar))
= 0.$$
Furthermore, assume that there exists an open set $\mathscr{O}\subset \CC$ containing $0$ such that as $\hbar \to 0$, the asymptotic expansion formula of $\mathfrak{J}_{X}(\hbar, \mathrm{x})$ is of the form
\begin{align*}
\mathfrak{J}_{X}(\hbar, \mathrm{x})
= \frac{C(\mathrm{x})}{\sqrt{\hbar}} \cdot \exp\Big(\frac{i}{2\pi \hbar} \phi_{m,l}(\mathrm{x})\Big) \Big(1 + O(\hbar)\Big),
\end{align*}
where $ \phi_{m,l}(\mathrm{x})$ is the Neumann-Zagier potential function of $\SS^3 \setminus K$ with respect to the meridian $m$ and the preferred longitude $l$, and $C(\mathrm{x})$ is some non-zero quantity that depends continuously on $\mathrm{x}$. 
Then the polynomial $\hat{A}_K(M, L, 1)$ is divisible by the geometric component of $A_K(M,L)$.
\end{theorem}

Theorem \ref{thm:AJ} will be proved in Section \ref{sub:AJ}.

Let $X_n$ be the ideal triangulation of the $n$-th hyperbolic twist knot complement $\SS^3 \setminus K_n$ constructed in \cite{BAGPN}. The following corollary sheds some light on the AJ conjecture for the Andersen--Kashaev TQFT of the twist knots.

\begin{corollary}
Suppose there exists $\hat{A}_{K_n}(\hat{M}, \hat{L}, q) = \sum_{n=0}^d a_n(\hat{M},q) \hat{L}^n \in \ZZ[ \hat{M}^{\pm 1}, \hat{L}, q]$ such that 
$$ \hat{A}_{K_n}(\hat{M}, \hat{L}, q) (\mathfrak{J}_{X_n}(\hbar, \mathrm{x})) 
= \sum_{n=0}^d a_n(\hat{M},q) (\mathfrak{J}_{X_n}(\hbar, \mathrm{x} + 4\pi i n \hbar))
= 0.$$
Then the polynomial $ \hat{A}_{K_n}(M, L, 1)$ is divisible by the geometric component of the $A_{K_n}(M,L)$.
\end{corollary}

\subsection{Asymptotics of the Teichm\"{u}ller TQFT partition functions}\label{sub:intro:expansion}

Conjecture \ref{conj:vol:BAGPN} retrieves the volume as an asymptotics of the Jones function extracted from the Andersen--Kashaev TQFT. Seeing this, it is natural to ask if a similar geometric quantity can be extracted directly from the partition function $\mathscr{Z}_\hbar(X,\alpha)$ (see also Kashaev's conjecture at the end of \cite{KaICM}). 
In the third main result of this paper, Theorem \ref{mainthmZ}, we prove such an asymptotics, where we also see the appearances of logarithmic holonomies and the $1$-loop invariant.

\begin{theorem}\label{mainthmZ}
Let $M$ be a one-cusped hyperbolic $3$-manifold with trivial second homology
and let $X$ be an ordered ideal triangulation of 
$M$.
Let $\alpha \in \mathscr{A}_X$ be an angle structure of $X$. 
\begin{enumerate}
\item 
Suppose $X$ is FAMED for the boundary curve $l$. Then we have
\begin{align*}
\limsup_{\hbar\to 0} 2\pi {\hbar} \log|\mathscr{Z}_{\hbar}(X, \alpha) |
\leq -\sup\{ \Vol(\alpha') \mid \alpha' \in \mathscr{A}_X, 
\mathrm{H}^\R_{X,l}(\alpha')=\mathrm{H}^\R_{X,l}(\alpha)
\},
\end{align*}
where $\mathrm{H}^\R_{X,l}(\alpha)$ is the angular holonomy associated to the curve $l$ and the angle structure $\alpha$, and $\Vol(\alpha')$ is the sum of hyperbolic volumes of the tetrahedra of $X$ under the angle structure $\alpha'$.
\item 
Suppose $X$ is FAMED for the boundary curve $l$. Further assume that there exists shape parameters $\mathbf{z}$ with positive imaginary parts that satisfy Equation (\ref{equ}) with $\xi = i\mathrm{H}^\R_{X,l}(\alpha)$.
Then we have
$$|\mathscr{Z}_{\hbar}(X, \alpha) |
=
\left| \frac{\exp\left( \frac{i}{\pi}R(\mathbf{z})\right)}{\det(\mathcal{A}) \sqrt{ 2 \det\mathbf{B}^{-1}}} \cdot \frac{\exp\Big( -\frac{1}{2\pi {\hbar}}\Vol\left(M; l,\mathrm{H}^\R_{X,l}(\alpha)\right)\Big)}{\sqrt{\pm \tau(M, l, X, \mathbf{z})}}  \Big(1 + O({\hbar})\Big) \right|,$$
where 
\begin{itemize}
\item $m$ is a peripheral curve with algebraic intersection $1$ with $l$,
\item $\mathcal{A}$ is the face gluing matrix and $\mathbf{B}$ is the Neumann-Zagier matrix, both used in the definition of the FAMED condition,
\item $\Vol\left (M; l,\mathrm{H}^\R_{X,l}(\alpha)\right)=\Vol(\mathbf{z})$ is the volume of $M$ with the (possibly incomplete) hyperbolic cone structure satisfying $\mathrm{H}^\C_{X,l}(\mathbf{z})=i \mathrm{H}^\R_{X,l}(\alpha)$, i.e. the sum of hyperbolic volumes of the tetrahedra of $X$ under the shape structure $\mathbf{z}$,
\item $\tau(M, l, X, \mathbf{z})$ is the 1-loop invariant of $\SS^3 \setminus K$ associated to the shape parameters $\mathbf{z}$ and the peripheral curve $l$ (see Section \ref{1loopsection} for details), and
\item $R$ is the same analytic function as in Theorem \ref{thm:Jones:FAMED}(2). 
\end{itemize}
\item In particular, we have
\begin{align*}
\lim_{\hbar\to 0} 2\pi \hbar \log|\mathscr{Z}_{\hbar}(X, \alpha) |
= -\Vol\left(M; l,\mathrm{H}^\R_{X,l}(\alpha)\right ).
\end{align*}
\end{enumerate}
\end{theorem}

Theorem \ref{mainthmZ} will be proven in Section \ref{sec:asymp:expan}.
Following Theorem \ref{mainthmZ}, we propose the following asymptotic expansion conjecture for the Teichm\"{u}ller TQFT partition function $\mathscr{Z}_{\hbar}(X, \alpha)$:
\begin{conjecture}\label{conj:expansion:TQFT}
Let $M$ be a one-cusped hyperbolic $3$-manifold with trivial second homology
and let $X$ be an ordered ideal triangulation of 
$M$.
Let $\alpha \in \mathscr{A}_X$ be an angle structure of $X$.    

Then, following the notations of Theorem \ref{mainthmZ} (but not necessarily the assumptions on $X$), we have the asymptotic expansion
$$|\mathscr{Z}_{\hbar}(X, \alpha) |=
\left| \frac{\exp\left( -\frac{i}{\pi} R(\mathbf{z}) \right)}{\det(\mathcal{A})\sqrt{ 2 \det\mathbf{B}^{-1}}} \cdot \frac{\exp\Big( -\frac{1}{2\pi {\hbar}}\Vol(M; l,\alpha)\Big)}{\sqrt{\pm \tau(M, l, X, \mathbf{z})}}  \Big(1 + O({\hbar})\Big) \right|.$$
\end{conjecture}

\begin{remark}\label{rmk1loopconj}
The 1-loop conjecture proposed by Dimofte-Garoufalidlis suggests that the 1-loop invariant coincides with the adjoint twisted Redeimeister torsion \cite{DG} (see Section \ref{1loopsection} and Conjecture \ref{1loopconjstatement} for details). If the 1-loop conjecture is true, then we can replace the 1-loop invariant in Theorem \ref{thm:Jones:FAMED}, Theorem \ref{mainthmZ} and Conjecture  \ref{conj:expansion:TQFT} by the adjoint twisted Redeimeister torsion.
\end{remark}

\begin{remark}
As a new illustration of the rich interplay between the theoretical results of the current paper and the experimental tests in \cite{BAGW}, 
    the tens of thousand of computer tests done in \cite{BAGW} 
    (and in other ongoing projects with Guilloux and Guéritaud)
    suggest that $2 (\det (\mathcal{A}))^2 \det\mathbf{B}^{-1}$ is always an integer depending at least on some homology torsion of the manifold, and is equal to $\pm 1$ for all the ideal triangulations $X_n$ for hyperbolic twist knots constructed in \cite{BAGPN}. One can thus expect variants of Theorem \ref{mainthmZ} and Conjecture \ref{conj:expansion:TQFT} to hold, where the asymptotic expansion depends even less of the triangulation $X$.
\end{remark}

Finally, let us mention that Theorem \ref{mainthmZ} holds for the infinite family of hyperbolic twist knots, which expands the results of \cite{BAGPN}.

\begin{proposition}\label{prop:twist:FAMED}
The ideal triangulations $X_n$ for hyperbolic twist knot complements $\SS^3 \setminus K_n$ constructed in \cite{BAGPN} are FAMED for the preferred longitude $l_{K_n}$ and geometric. 
\end{proposition}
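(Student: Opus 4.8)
The plan is to verify the four conditions in the definition of FAMED for the whole family $X_n$, together with geometricity. Two of them come for free: \cite{BAGPN} proves that each $X_n$ is a geometric ideal triangulation of $\SS^3\setminus K_n$, and geometricity produces a point of $\mathscr{A}_{X_n}$ (take the dihedral angles of the positively oriented geometric solution), so Property III and condition (1) hold. Invertibility of the face adjacency matrix $A$ of $X_n$ (condition (2)) and of the Neumann--Zagier matrix $\mathbf{B}$ with respect to the preferred longitude $l$ (condition (3)) can be read off the explicit matrices computed in \cite{BAGPN}, or re-derived from the combinatorics recalled below. So the real content is the edge-duality identity (4), namely $\mathbf{B}^{-1}\mathbf{A}=\mathscr{G}$, for all $n$.

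First I would write out, from the description of $X_n$ in \cite{BAGPN}, the face-gluing data $x_k(T_j)$, the edge-gluing data, and the holonomy of $l$, as explicit $n$-indexed families. Since these twist-knot triangulations are built by stacking $n$ layered tetrahedra onto a fixed end-piece, all the matrices involved --- $R,A,B$ on the one hand and $\mathbf{A},\mathbf{B}$ on the other --- become, after a suitable ordering of the tetrahedra and of the edges, block-banded with a single repeating block and only a bounded number of boundary rows and columns (the ones coming from the fixed end-piece, and, for $\mathbf{A},\mathbf{B}$, from the longitude). Consequently $A$ and $\mathbf{B}$ can be triangularized by an explicit sequence of elementary operations, or their determinants computed through a transfer-matrix recursion in $n$, which settles (2) and (3) uniformly; the same computation yields a closed form for $A^{-1}$, hence for $RA^{-1}B$, hence for $Q=-\frac{1}{2}\big(RA^{-1}B+(RA^{-1}B)^{\top}\big)$ and for $\mathscr{G}=-2\,E\,Q\,E+E_{+}$, where $E=\mathrm{diag}(\epsilon(T_i))$ and $E_{+}=\mathrm{diag}(\max\{\epsilon(T_i),0\})$.

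It then remains to check $\mathbf{A}=\mathbf{B}\,\mathscr{G}$, which I would organize as an induction on $n$: one shows that passing from $X_n$ to $X_{n+1}$ enlarges each of $\mathbf{A}$, $\mathbf{B}$ and $\mathscr{G}$ by one compatible row and column, so that the identity for $X_n$ propagates, with the small base cases --- including the figure-eight knot, handled essentially as in the running example above --- checked by a direct computation. Because of the banded structure this reduces to a bounded list of \emph{generic row} identities plus the finitely many boundary rows. The step I expect to be the main obstacle is exactly the bookkeeping needed to present the two a priori unrelated combinatorial inputs --- the face adjacencies, which control $\mathscr{G}$ through $R,A,B,Q$, and the edge adjacencies together with the holonomy of $l$, which control $\mathbf{A}$ and $\mathbf{B}$ --- in mutually compatible orderings, and then seeing the edge duality as a structural feature preserved by the layering operation rather than a coincidence to be re-proved afresh for each $n$; in practice, producing $RA^{-1}B$ explicitly for all $n$ is the heaviest ingredient, and a transfer-matrix description of the layered part of $X_n$ is what makes both this inversion and the final entry-by-entry verification tractable.
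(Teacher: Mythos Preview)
Your plan is sound in outline, but it diverges from the paper's argument in organization, and it overestimates how much work is left to do.

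The paper does \emph{not} proceed by induction on $n$. Instead it writes down, for all $n$ simultaneously, explicit closed-form matrices $\mathbf{A}$, $\mathbf{B}$, $\mathbf{B}^{-1}$, and $\mathbf{B}^{-1}\mathbf{A}$ (the banded structure you anticipate is exactly what makes this possible), then compares the result entry by entry with $\mathscr{G}$. The crucial shortcut you are missing is that the matrix $Q$ --- which you correctly identify as the heaviest ingredient via $RA^{-1}B$ --- is \emph{already} computed in closed form in \cite{BAGPN} (their Lemma~5.5 for odd $n$ and Theorem~8.4 for even $n$). So the paper simply imports $Q$, forms $\mathscr{G}$ from it by the diagonal conjugation, and checks $\mathbf{B}^{-1}\mathbf{A}=\mathscr{G}$ as an equality of two explicit $(p+3)\times(p+3)$ matrices. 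No transfer-matrix recursion or inductive step is needed.

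Two further points. First, the paper treats odd and even $n$ separately, because the edge equations and the longitude holonomy differ between the two parities (cf.\ Sections~4.3 and~8.2 of \cite{BAGPN}); your proposal should acknowledge this case split rather than a single layering induction. Second, your suggestion that the figure-eight knot serves as a base case is off: Thurston's two-tetrahedron triangulation of $\SS^3\setminus 4_1$ used in the paper's running example is not the triangulation $X_n$ for small $n$, so it does not anchor the induction you have in mind. Your inductive strategy could be made to work, but it would duplicate effort already absorbed by \cite{BAGPN}; the paper's direct verification is both shorter and cleaner.
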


Proposition \ref{prop:twist:FAMED} follows from the results of \cite{BAGPN} and the computations done in Section \ref{ITXn}. Combined with Theorem \ref{mainthmZ} (2),  we obtain:

\begin{corollary}
Conjecture \ref{conj:expansion:TQFT} holds for the ideal triangulations $X_n$ of the twist knot complements $\SS^3 \setminus K_n$  constructed in \cite{BAGPN},
for $l=l_{K_n}$ the preferred longitude, and $m$ a meridian of $K_n$.
\end{corollary}

\subsection{Approaching the Casson conjecture via FAMED triangulations}\label{sub:intro:casson}

Next, we explore the connection between the exponential decay rate of the invariant and the Casson conjecture (Conjectures \ref{Cassonconj} and \ref{Cassonconj2}). Let $M$ be a 3-manifold with toroidal boundary whose interior admits a complete hyperbolic metric. Let $X$ be an ideal triangulation of the interior of $M$ with $\mathscr{A}_X \neq \emptyset$. Let $\gamma$ be a simple closed curve on the boundary torus $\partial M$ and let 
$\mathrm{H}^\R_{X,\gamma}(\alpha)$
be the angular holonomy of $\gamma$ at the angle structure $\alpha$.
For any $\theta \in \R$, define $\mathscr{A}^\gamma_{X}(\theta):=  \{ \alpha \in \mathscr{A}_X \mid \mathrm{H}^\R_{X,\gamma}(\alpha) = \theta \}$.
The space of angle structures can thus be decomposed into the disjoint union 
$$ \mathscr{A}_X = \bigsqcup_{\theta \in \RR} \mathscr{A}^\gamma_{X}(\theta) = \bigsqcup_{\theta \in \RR} \{ \alpha \in \mathscr{A}_X \mid \mathrm{H}^\R_{X,\gamma}(\alpha) = \theta \}. $$
The existence of hyperbolic cone structures with angle $\theta$ is closely related to the maximization problem of the volume functional on the subspace $\mathscr{A}_{X}^{\gamma}(\theta)$. More precisely, the volume functional has a maximum point on $\mathscr{A}_{X}^{\gamma}(\theta)$ if and only if there exist shape parameters $\mathbf{z}=\mathbf{z_{\gamma,\theta}}$ with positive imaginary parts satisfying $\mathrm{H}^\C_{X,\gamma}(\textbf{z}) = i \theta$. In this case, we have
$$  \sup\{ \Vol(\alpha) \mid \alpha \in \mathscr{A}_{X}^{\gamma}(\theta) \}  
= \max\{ \Vol(\alpha) \mid \alpha \in \mathscr{A}_{X}^{\gamma}(\theta) \}
= \Vol(M;\gamma,\theta),
$$
where $\Vol(M;\gamma,\theta)=\Vol(\mathbf{z_{\gamma,\theta}})$ is the volume of $M$ with the hyperbolic cone structure with $\mathrm{H}^\C_{X,\gamma}(\textbf{z}) = i \theta$.

If the volume functional does not attain its maximum, the Casson conjecture (so called by Luo in \cite{Luo}, see also the last paragraph in \cite[Section 6.4]{FG}) predicts that the volume functional is bounded above by the hyperbolic volume of the manifold.
\begin{conjecture}[Casson conjecture, Problem 5 in \cite{Luo}]\label{Cassonconj} Let $M$ be a 3-manifold with toroidal boundary whose interior admits a complete hyperbolic metric. Let $X$ be an ideal triangulation of the interior of $M$. Suppose the space of angle structures $\mathscr{A}_X$ of $X$ is non-empty. Then we have
$$
 \sup\{ \Vol(\alpha) \mid \alpha \in \mathscr{A}_X \} \leq  \Vol(M).
$$
\end{conjecture}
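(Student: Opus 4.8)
The plan is to study the volume functional $\Vol$ on the closure $\overline{\mathscr{A}_X}$ of the angle structure space. First I would observe that $\Vol$ extends continuously to the compact convex polytope $\overline{\mathscr{A}_X}$: the volume of an ideal tetrahedron with dihedral angles $(a,b,c)$ satisfying $a+b+c=\pi$ and $a,b,c\geq 0$ equals $\Lambda(a)+\Lambda(b)+\Lambda(c)$, where $\Lambda$ is the Lobachevsky function, and this expression is continuous up to the flat configurations where some angle vanishes, on which it equals $0$. Since $\Vol$ is concave on $\overline{\mathscr{A}_X}$ and strictly concave on the open set $\mathscr{A}_X$ (a classical fact going back to Rivin), the supremum $v:=\sup\{\Vol(\alpha)\mid \alpha\in\mathscr{A}_X\}$ is attained at some point $\alpha^{\star}\in\overline{\mathscr{A}_X}$.

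If $\alpha^{\star}$ lies in the interior $\mathscr{A}_X$, then it is a critical point of $\Vol$ under the linear constraints defining angle structures, and the Casson--Rivin criterion applies: combining the Schl\"afli formula with the Lagrange multiplier equations forces the $\alpha^{\star}_i$ to be the dihedral angles of a positively oriented solution of Thurston's gluing equations, hence of the complete hyperbolic structure on $M$. Mostow--Prasad rigidity then yields $\Vol(\alpha^{\star})=\Vol(M)$, so $v=\Vol(M)$ and the inequality holds with equality. The conjecture thus reduces to the case where every maximizer of $\Vol$ on $\overline{\mathscr{A}_X}$ lies on the boundary $\partial\overline{\mathscr{A}_X}$.

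That boundary case is where the real difficulty lies, and is the reason the statement is still a conjecture: when $\alpha^{\star}$ has flat tetrahedra there is no hyperbolic structure to invoke, and one must bound $\Vol(\alpha^{\star})$ --- a sum of the volumes of the non-flat tetrahedra --- directly by $\Vol(M)$. The natural attempt is a decomposition argument with an induction on the number of tetrahedra: the flat locus of $\alpha^{\star}$ is governed by a normal surface $S$ carried by $X$, and one would like to cut $M$ along an essential piece of $S$, check that $\alpha^{\star}$ restricts to an angle structure with the same total volume on each resulting triangulated piece, and conclude by additivity of hyperbolic (equivalently simplicial) volume under cutting along incompressible surfaces, via Gromov--Thurston. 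The obstacle is that one has essentially no control over the topology of $S$: its components need not be incompressible, need not be tori or annuli, and the combinatorics of the degeneration can be arbitrarily complicated, so neither the cutting step nor the volume-additivity step is available unconditionally. I would therefore expect to prove the conjecture in general only after imposing an efficiency hypothesis on $X$ that tames $S$, such as $1$-efficiency or tautness.

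Finally, in the FAMED geometric setting this difficulty can be sidestepped using Theorem \ref{mainthmZ} itself. Indeed, part (1) bounds $\limsup_{\B\to 0} 2\pi\B^2\log|\mathscr{Z}_{\hbar}(X,\alpha)|$ above by $-\sup\{\Vol(\alpha')\mid \alpha'\in\mathscr{A}_X,\ \lambda(\alpha')=\lambda_X(\alpha)\}$, while the full asymptotic of part (2) identifies this same limit with $-\Vol(\SS^3\setminus K,\mathrm{H}(l)=i\lambda_X(\alpha))$; comparing the two, and then letting the longitudinal angle vary over all of $\mathscr{A}_X$ (the complete structure giving the maximal volume $\Vol(\SS^3\setminus K)$), recovers $\sup\{\Vol(\alpha)\mid\alpha\in\mathscr{A}_X\}\leq\Vol(\SS^3\setminus K)$ for every FAMED geometric triangulation $X$. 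The general conjecture, however, would require removing both the FAMED and the geometricity hypotheses, and it is the boundary pathology described above that I expect to be the genuine obstruction.
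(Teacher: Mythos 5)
The statement you are trying to prove is Conjecture~\ref{Cassonconj}, which the paper presents as an \emph{open} conjecture (attributed to Casson) rather than proving it, so there is no internal proof to compare against. Your first two paragraphs correctly distil the classical reduction: $\Vol$ is concave on the compact polytope $\overline{\mathscr{A}_X}$, so a maximizer exists, and if it is interior then Casson--Rivin together with Mostow--Prasad rigidity forces $\Vol(\alpha^\star)=\Vol(M)$; the genuine content of the conjecture is the boundary-maximizer (non-geometric) case, which you openly acknowledge you cannot handle and for which you only sketch the (well-known to be problematic) normal-surface cutting heuristic. That is an honest assessment of where the difficulty lies, and it is consistent with the conjecture remaining open.

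The closing paragraph, however, is where I would flag a real problem. You claim that ``in the FAMED geometric setting this difficulty can be sidestepped using Theorem~\ref{mainthmZ} itself,'' by comparing parts (1) and (2). But if $X$ is geometric, the interior maximizer exists, and your own second paragraph already shows $\sup\Vol=\Vol(M)$ by Casson--Rivin with no TQFT input at all; invoking Theorem~\ref{mainthmZ} there is circular (and strictly weaker, since Theorem~\ref{mainthmZ}(2) requires geometricity at the cone structure $\mathrm H(l)=i\lambda_X(\alpha)$ for every $\lambda_X(\alpha)$ you want to sweep, not merely at the complete structure). The paper's actual contribution in this direction is Corollary~\ref{coroCassonconj2}: for a FAMED $X$ that is \emph{not} assumed geometric, Theorem~\ref{mainthmZ}(1) together with the (conjectural) full asymptotic expansion Conjecture~\ref{AEFZmain} would yield the slice-wise inequality of Conjecture~\ref{Cassonconj2}. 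The non-trivial case of that implication is exactly the non-geometric one, and there Theorem~\ref{mainthmZ}(2) is unavailable, so your argument does not reach it. Finally, even the trivial geometric case you claim to recover is restricted to knot complements in $\SS^3$, whereas Conjecture~\ref{Cassonconj} is stated for arbitrary hyperbolic $M$ with toroidal boundary, so it does not constitute a proof of the statement as written.
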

It is natural to ask the same question on each subspace $\mathscr{A}_{X}^{\gamma}(\theta)$, which lead us to state a "level Casson conjecture":
\begin{conjecture}\label{Cassonconj2} Let $M$ be a 3-manifold with toroidal boundary whose interior admits a complete hyperbolic metric. Let $X$ be an ideal triangulation of the interior of $M$.
Let $\gamma$ be a simple closed curve as before.
Let $\theta\in \RR$ with $\mathscr{A}_{X}^{\gamma}(\theta)\neq \emptyset$. Suppose $M$ has a hyperbolic cone structure with $\mathrm{H}^\C_{X,\gamma}(\textbf{z}) = i \theta$. Then we have
$$
 \sup\{ \Vol(\alpha) \mid \alpha \in \mathscr{A}_{X}^{\gamma}(\theta) \} \leq 
 \Vol(M;\gamma,\theta).
$$
\end{conjecture}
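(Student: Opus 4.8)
\emph{Towards Conjecture~\ref{Cassonconj2}.} The plan is to run the Casson--Rivin variational principle on the affine slice $\mathscr{A}_{X,\theta}=\{\alpha\in\mathscr{A}_X\mid \lambda_\gamma(\alpha)=\theta\}$. Since $\lambda_\gamma$ is an affine functional of the angles, $\mathscr{A}_{X,\theta}$ is a bounded, relatively open convex polytope, and the volume functional $\Vol$ --- a sum of Lobachevsky volumes of ideal tetrahedra --- extends continuously to the compact closure $\overline{\mathscr{A}_{X,\theta}}$, degenerate tetrahedra contributing $0$. By Rivin's Hessian computation $\Vol$ is concave on $\overline{\mathscr{A}_{X,\theta}}$ and strictly concave on the interior, so its supremum is attained at some $\alpha^{\ast}\in\overline{\mathscr{A}_{X,\theta}}$, and it suffices to bound $\Vol(\alpha^{\ast})$.

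First I would dispose of the interior case $\alpha^{\ast}\in\mathscr{A}_{X,\theta}$. There $\alpha^{\ast}$ is a critical point of $\Vol$ constrained to $\mathscr{A}_{X,\theta}$, and the Schl\"afli--Lagrange equations --- the gradient of $\Vol$ lying in the span of the edge-equation gradients and of $d\lambda_\gamma$ --- say exactly that the attached shape parameters, which automatically have positive imaginary part, solve all gluing equations with $\mathrm{H}(\gamma)=i\theta$; i.e.\ $X$ is geometric at a hyperbolic structure with $\mathrm{H}(\gamma)=i\theta$. By Mostow--Prasad rigidity (for $\theta=0$) and by local rigidity of hyperbolic cone structures of cone angle $\le 2\pi$ (Hodgson--Kerckhoff, Kojima) for $\theta\neq 0$, this structure is the cone structure hypothesized in the statement, so $\Vol(\alpha^{\ast})=\Vol(M,\mathrm{H}(\gamma)=i\theta)$ and one in fact gets equality; this recovers the geometric case discussed before the statement.

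The main obstacle is the boundary case $\alpha^{\ast}\in\partial\mathscr{A}_{X,\theta}$, where some tetrahedra are flat and span a subcomplex contributing nothing, and one must still bound the volume of the non-flat part by $\Vol(M,\mathrm{H}(\gamma)=i\theta)$. I see three possible lines of attack. (i) A degeneration argument: for $\theta'$ close to $\theta$ the maximizer on $\mathscr{A}_{X,\theta'}$ may lie in the interior, so the previous paragraph applies, and one lets $\theta'\to\theta$; the difficulty is controlling the family of maximizers when $X$ is non-geometric at $i\theta$. (ii) A normal-surface argument in the spirit of Luo's volume-optimization program and of Luo--Tillmann: a boundary maximizer forces a (spun-)normal surface or an edge-degeneration class, which one then plays against the hyperbolicity of $M$. (iii) For $\gamma=l$ and $X$ FAMED, a Teichm\"uller-TQFT route, combining the unconditional bound $\limsup_{\B\to 0}2\pi\B^2\log|\mathscr{Z}_\hbar(X,\alpha)|\le -\sup\{\Vol(\alpha')\mid \alpha'\in\mathscr{A}_X, \lambda(\alpha')=\lambda_X(\alpha)\}$ of Theorem~\ref{mainthmZ}(1) with the matching lower bound $\lim_{\B\to 0}2\pi\B^2\log|\mathscr{Z}_\hbar(X,\alpha)|=-\Vol(M,\mathrm{H}(l)=i\lambda_X(\alpha))$, which is the content of Conjecture~\ref{AEFZmain} and holds by Theorem~\ref{mainthmZ}(2) once $X$ is moreover geometric at the relevant structure (in which case, though, one is already in the interior case). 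I expect the genuine bottleneck to be precisely this matching lower bound --- equivalently, line (ii) above --- since both amount to ruling out that a non-geometric triangulation over-counts volume, which is exactly what keeps the statement a conjecture.
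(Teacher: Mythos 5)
This statement is posed in the paper as a \emph{conjecture} (Conjecture~\ref{Cassonconj2}); the paper supplies no proof, only the conditional partial result of Corollary~\ref{coroCassonconj2}. Your proposal is honest about exactly that, and the rigorous part of it matches what the paper actually establishes. Your ``interior maximizer'' paragraph reproduces the discussion immediately preceding the conjecture in the paper: an interior critical point of $\Vol$ on $\mathscr{A}_{X,\theta}$ gives a geometric solution of the gluing equations with $\mathrm{H}(\gamma)=i\theta$, and then $\sup\Vol=\Vol(M,\mathrm{H}(\gamma)=i\theta)$ with equality. Your route~(iii) for the boundary case is precisely the mechanism of Corollary~\ref{coroCassonconj2}: Theorem~\ref{mainthmZ}(1) gives the unconditional upper bound $\limsup 2\pi\B^2\log|\mathscr{Z}|\le -\sup\Vol$, and the missing matching lower bound is exactly Conjecture~\ref{AEFZmain}, so the inequality only follows conditionally, for FAMED triangulations with $\gamma=l$. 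You correctly identify that closing the gap would require either proving Conjecture~\ref{AEFZmain} in the non-geometric case or an independent geometric argument (your routes~(i)--(ii)); neither is done in the paper. Two small cautions: Mostow--Prasad rigidity handles uniqueness only for $\theta=0$, and for $\theta\neq 0$ the cited uniqueness (Kojima) requires the cone angle bound $\theta<\pi$, as the paper itself notes in Section~\ref{sub:thurston}; and the interior case needs $\mathscr{A}_{X,\theta}$ to actually contain a critical point, which is not automatic -- that is exactly why the boundary case cannot be discarded. Overall, your proposal is a correct and accurate assessment of the status of the statement rather than a proof, in full agreement with the paper.
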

The following corollary to Theorem \ref{mainthmZ} provides the following partial results on Conjecture \ref{Cassonconj2}.
\begin{corollary}\label{coroCassonconj2}
Let $M$ be a one-cusped hyperbolic $3$-manifold with trivial second homology
and let $X$ be an ordered ideal triangulation of 
$M$.
Let $\alpha \in \mathscr{A}_X$ be an angle structure of $X$ and $l$ be a peripheral curve. Suppose $X$ is FAMED with respect to $l$ and Conjecture \ref{conj:expansion:TQFT} is true for $(X,\alpha)$, then we have
$$ \sup\{ \Vol(\alpha') \mid \alpha' \in \mathscr{A}_{X}^l (\mathrm{H}^\R_{X,l}(\alpha))\} \leq 
\Vol\left(M;l,\mathrm{H}^\R_{X,l}(\alpha)\right).
$$
In other words, $X$ satisfies Conjecture \ref{Cassonconj2} for the preferred curve $l$.
\end{corollary}
See Section \ref{sub:proofs:thm15} for the proof of Corollary \ref{coroCassonconj2}.

\subsection{Computational results about FAMED triangulations}\label{sub:intro:computer}
In \cite{BAGW}, A. Guilloux and the authors checked the FAMED conditions for the 
59,937
triangulations of knot complements in the Snappy census HTLinkExteriors. We have the following observations at the time of writing.
\begin{enumerate}
\item For 42,117 knots of the census, including all knots with at most 12 crossings and all but six knots whose census triangulation has at most 23 tetrahedra, we could find a FAMED geometric triangulation of the knot complement, up to retriangulation if need be. This yields certified  proofs of the Andersen--Kashaev volume conjecture for these 42,117 knots.
\item We observed that conditions (2) and (3) in Definition \ref{def:FAMED} were always satisfied at the same time, and when they were satisfied, so was condition (4).
\item When the triangulation of a knot complement is FAMED, we always observe  we have $\det \mathcal{A} = \pm 1$ and $\det \mathbf{B} = \pm 2$.
    \item The FAMED condition is not equivalent to geometricity. There exists a triangulation of a knot complement which is geometric but not FAMED. Similarly, there exists a triangulation of a knot complement which is FAMED but not geometric.
    \item There exists a knot complement with a FAMED triangulation and a non-FAMED triangulation.
\end{enumerate}

\subsection{Outline of the paper}
This paper is organized as follows. In Section \ref{sectprelim}, we review the preliminary results required to understand the rest of the paper. In Section \ref{sec:asymp:expan}, we study the asymptotics of the partition function. To do that, in Section \ref{sect:expression}, we first utilize the FAMED property to find a nice expression of the partition function in terms of Neumann-Zagier matrices (Proposition \ref{Tpartiexpress2}). In Section \ref{sect:potential:prop}, we study various properties of the potential function required for applying the saddle point approximation, including its critical point equation (Proposition \ref{critThurscorrespondence}), the critical value (Proposition \ref{dilogVolgen}), the concavity (Proposition \ref{concavSgen}) and the determinant of the Hessian matrix (Proposition \ref{Hesstotor}). Then we prove Theorem \ref{mainthmZ} and Corollary \ref{coroCassonconj2} in Section \ref{sub:proofs:thm15}. In Section \ref{subsec:existJ}, we prove the existence of the Jones function in Proposition \ref{pfexistJ}. In Section \ref{subsect:potentialpropJ}, we study the properties of the potential function, including its relationship with the Neumann-Zagier potential function (Proposition \ref{JandNZ}), concavity (Proposition \ref{concavSxgen}) and the determinant of the Hessian (Proposition \ref{1loopJ}). We prove Theorem \ref{thm:Jones:FAMED} and Theorem \ref{thm:AJ} in Section \ref{subsect:AEFJ} and Section \ref{sub:AJ} respectively. Finally, we prove Proposition \ref{prop:twist:FAMED} in Section \ref{ITXn}.

\section*{Acknowledgements}
We thank the organizers of the AIM workshop "Quantum invariants and low-dimensional topology", where the authors met and initiated this research project. We would like to thank  Stéphane Baseilhac, François Guéritaud, Antonin Guilloux, Rinat Kashaev, Feng Luo, Clément Maria, Gregor Masbaum, Andrew Nietzke, Saul Schleimer and Tian Yang for valuable discussions and suggestions. We thank Federica Fanoni for suggesting the acronym \textit{FAMED}.

\section{Preliminaries}\label{sectprelim}

\subsection{Triangulations}

In this section we follow \cite{AK, KaWB}.
A tetrahedron $T$ with faces $\mathsf{A},\mathsf{B},\mathsf{C},\mathsf{D}$ will be denoted as in Figure \ref{fig:tetrahedron}, where the face outside the circle represents the back face and the center of the circle is the opposite vertex pointing towards the reader. We always choose an \textit{order} on the four vertices of $T$ and we call them $0_T,1_T,2_T,3_T$ (or $0,1,2,3$ if the context makes it obvious). Consequently, if we rotate $T$ such that $0$ is in the center and $1$ at the top, then there are two possible places for vertices $2$ and $3$; we call $T$ a \textit{positive} tetrahedron if they are as in Figure \ref{fig:tetrahedron}, and \textit{negative} otherwise. We denote $\varepsilon(T) \in \{ \pm 1\}$ the corresponding \textit{sign} of $T$. We \textit{orient the edges} of $T$  {according} to the order on vertices, and we endow each edge with a parametrisation by $[0,1]$ respecting the orientation. Note that such a structure was called a \textit{branching} in \cite{BB} and equivalently described as an orientation of edges without $3$-cycles on faces.

Thus, up to isotopies fixing the $1$-skeleton pointwise, there is only one way of \textit{gluing} two triangular faces together while \textit{respecting the order of the vertices} and the edge parametrisations, and that is the only type of face gluing we consider in this paper.

\begin{figure}[h]
\centering
\begin{tikzpicture}

\begin{scope}[xshift=0cm,yshift=0cm,rotate=0,scale=1.5]

\draw (0,-0.15) node{\scriptsize $0$} ;
\draw (0,1.15) node{\scriptsize $1$} ;
\draw (1,-0.55) node{\scriptsize $2$} ;
\draw (-1,-0.55) node{\scriptsize $3$} ;
\draw (1,1) node{$\mathsf{A}$} ;
\draw (0,-0.6) node{$\mathsf{B}$} ;
\draw (-0.5,0.3) node{$\mathsf{C}$} ;
\draw (0.5,0.3) node{$\mathsf{D}$} ;

\path [draw=black,postaction={on each segment={mid arrow =black}}]
(0,0)--(-1.732/2,-0.5);

\path [draw=black,postaction={on each segment={mid arrow =black}}]
(0,0)--(0,1);

\path [draw=black,postaction={on each segment={mid arrow =black}}]
(0,0)--(1.732/2,-0.5);

\draw[->](1.732/2,-0.5) arc (-30:-90:1);
\draw (0,-1) arc (-90:-150:1);

\draw[->](0,1) arc (90:30:1);
\draw (1.732/2,0.5) arc (30:-30:1);

\draw[->](0,1) arc (90:150:1);
\draw (-1.732/2,0.5) arc (150:210:1);

\end{scope}

\end{tikzpicture}
\caption{The positive tetrahedron $T$} 
\label{fig:tetrahedron}
\end{figure}

Note that a tetrahedron $T$ like in Figure \ref{fig:tetrahedron} will 
represent 
an \textit{ideal} tetrahedron homeomorphic to a $3$-ball minus $4$ points in the boundary.

A \textit{triangulation} $X=(T_1,\ldots,T_N,\sim)$ is the data of $N$ distinct tetrahedra $T_1, \ldots, T_N$ and an equivalence relation $\sim$ first defined on the faces by pairing and the only gluing that respects vertex order, and also induced on edges  {and} vertices by the combined identifications. We call $M_X$ the (pseudo-)$3$-manifold 
$ M_X = T_1 \sqcup \cdots \sqcup T_N / \sim$ obtained by quotient. Note that $M_X$ may fail to be a manifold only at  (the image by the quotient map of) a vertex  of the triangulation, whose regular  {neighbourhood} might not be a $3$-ball (but for instance a cone over a torus for exteriors of links).

We denote $X^{k}$ (for $k=0, \ldots, 3$) the set of $k$-cells of $X$ after identification by $\sim$. In this paper we always  {assume} that \textit{no face is left unpaired  by $\sim$}, thus $X^{2}$ is always of  {cardinality} $2N$. By a slight abuse of notation we also call $T_j$ the $3$-cell inside the tetrahedron $T_j$, so that $X^{3} = \{T_1, \ldots, T_N\}$. Elements of $X^{1}$ are usually represented by distinct types of arrows, which are drawn on the corresponding preimage edges, see Figure 
\ref{fig:41:face:matrices}
for an example.

An \textit{ideal triangulation} $X$ contains ideal tetrahedra, and in this case the quotient space minus its vertices $M_X \setminus X^0$ is an open manifold. In this case we will denote $M=M_X \setminus X^0$ and say that the open manifold $M$ admits the ideal triangulation $X$.

Finally, for $X$ a triangulation and $k=0,1,2,3,$ we define $x_k\colon X^3 \to X^2$ the  {function} such that $x_k(T)$ is the equivalence class of the face of $T$ opposed to its vertex $k$.

\begin{example}\label{ex:41}
Figure 
\ref{fig:41:face:matrices}
displays the classical ideal triangulation of the complement of the figure-eight knot $M=S^3 \setminus 4_1$, with one positive and one negative tetrahedron. Here $X^3 =\{T_+, T_-\}$, $X^2 =\{\mathsf{A},\mathsf{B},\mathsf{C},\mathsf{D}\}$, $X^1 =\{ \sarrow , \darrow \}$ and $X^0$ is a singleton. 
\end{example}

\subsection{Angle structures}\label{sub:angle:str}

For a given triangulation $X=(T_1,\ldots,T_N,\sim)$ we denote $\mathcal{S}_X$ the set of \textit{shape structures on $X$}, defined as
$$
\mathcal{S}_X 
 = 
 \left \{ 
\alpha = \left (a_1,b_1,c_1,\ldots,
a_N,b_N,c_N\right ) \in (0,\pi)^{3N} \ \big | \
\forall k \in \{1,\ldots,N\}, \
a_k+b_k+c_k = \pi
\right \}.
$$
An angle $a_k$ (respectively $b_k,c_k$) represents the value of a dihedral angle on the edge $\overrightarrow{01}$ (respectively $\overrightarrow{02}$, $\overrightarrow{03}$) and its opposite edge in the tetrahedron $T_k$. If a particular shape structure $\alpha=(a_1,\ldots,c_N)\in \mathcal{S}_X$ is fixed, we define three associated  {function}s $\alpha_j\colon X^3 \to (0,\pi)$ (for $j=1,2,3$) that send $T_k$ to the $j$-th element of $\{a_k,b_k,c_k\}$ for each $k \in \{1,\ldots,N\}$.

Let $(X,\alpha)$ be a triangulation with a shape structure as before. We denote $\omega_{X,\alpha}\colon X^1 \to \mathbb{R}$ the associated \textit{weight function}, which sends an edge $e\in X^1$ to the sum of angles $\alpha_j(T_k)$ corresponding to tetrahedral edges that are preimages of $e$ by $\sim$. 
For example, if we denote 
$\alpha=(a_+,b_+,c_+,a_-,b_-,c_-)$ a shape structure on the triangulation $X$ of Figure \ref{fig:41:face:matrices}, then $\omega_{X,\alpha}(\sarrow) =
2 a_+ + c_+ + 2 b_- + c_-.$

We finally define $
\mathcal{A}_X := \left \{
\alpha \in \mathcal{S}_X \ \big | \
\forall e \in X^1, \ \omega_{X,\alpha}(e)=2\pi
\right \}
$
 the set of \textit{balanced shape structures on $X$}, or \textit{angle structures on $X$}.

\subsection{Thurston's complex gluing equations}\label{sub:thurston}

To a shape structure $(a,b,c)$ on an ordered tetrahedron $T$ (i.e.\ an element of $(0,\pi)^3$ of coordinate sum $\pi$) we can associate bijectively a \textit{complex shape structure} $z \in \R+i\R_{>0}$, as well as two companion complex numbers of positive imaginary part
$$z':=\frac{1}{1-z} \text{\ and \ } z'':=\frac{z-1}{z}.$$
Each of the $z, z', z''$ is associated to an edge, in a slightly different way according to $\varepsilon(T)$:
\begin{itemize}
\item In all cases, $z$ corresponds to the same two edges as the angle $a$.
\item If $\varepsilon(T)=1$, then $z'$ corresponds to $c$ and $z''$ to $b$.
\item If $\varepsilon(T)=-1$, then $z'$ corresponds to $b$ and $z''$ to $c$.
\end{itemize}
Another way of phrasing it is that $z, z', z''$ are always in a counterclockwise order around a vertex, whereas $a,b,c$ need to follow the specific vertex ordering of $T$.

In this article we will use the following definition of the complex logarithm:
\[
\Log(z) := \log\vert z \vert + i\arg(z)  \ \textrm{for} \ z \in \C^{*},
\]
where $\arg(z) \in (-\pi,\pi]$.

We now introduce a third way of describing the shape associated to a tetrahedron $T$, by the complex number
$$y := -\Log(z) + i \pi \in \R  + i(0, \pi),$$
 which lives in a horizontal strip of the complex plane. In particular, we have 
 $$z = -e^{-y}, \ \ \Log(z) = -y + i\pi, \ \ \Log(z') = -\Log(1+e^{-y}) \ \ \text{and} \ \ \Log(z'') = -\Log(1+e^{y}).$$
For clarity, we have the diffeomorphism
$$\psi_T\colon \R+i\R_{>0} \to \R + i (0,\pi), \ z \mapsto -\Log(z) + i \pi,$$
and its inverse
$$\psi^{-1}_T\colon \R + i (0,\pi) \to \R+i\R_{>0}, \ y \mapsto -e^{-y}.$$

We can now define the \textit{complex weight function}
$\omega^{\C}_{X,\alpha}\colon X^1 \to \C$ associated to a triangulation $X$ and an angle structure $\alpha \in \mathcal{A}_X$, which sends an edge $e \in X^1$ to the sum of logarithms of complex shapes associated to preimages of $e$ by $\sim$. For example, for the triangulation $X$ of Figure 
\ref{fig:41:face:matrices} and an angle structure $\alpha=(a_+,b_+,c_+,a_-,b_-,c_-)$, we have:
\begin{align*}
\omega^{\C}_{X,\alpha}(\sarrow) &=
2 \Log(z_+) + \Log(z'_+) + 2 \Log(z'_-) + \Log(z''_-)\\
&= \log\left (
\dfrac{\sin(c_+)^2 \sin(b_+) \sin(c_-)^2 \sin(a_-)}
{\sin(b_+)^2 \sin(a_+) \sin(a_-)^2 \sin(b_-)}
\right ) +i \omega_{X,\alpha}(\sarrow).
\end{align*}

Let $\mathbb{T}$ denote one toroidal boundary component of a $3$-manifold $M$ ideally triangulated by $X=(T_1,\ldots,T_N,\sim)$, and $\sigma$ an oriented normal closed curve in $S$. 
Truncating the tetrahedra $T_j$ at each vertex yields a triangulation of $S$ by triangles coming from vertices of $X$ (called the \textit{cusp triangulation}, see Figure \ref{fig:41:cusp:triang:NZ} for an example).
If the curve $\sigma$ intersects these triangles transversely (without back-tracking), then $\sigma$  cuts off corners of each such encountered triangle. Let us then denote $(z_1,\ldots,z_l)$ the sequence of (abstract) complex shape variables associated to these corners (each such $z_k$ is of the form $z_{T_{j_k}}, z'_{T_{j_k}}$ or $z''_{T_{j_k}}$).
Following \cite{FG}, we define the
 \textit{complex (or logarithmic)} holonomy $\mathrm{H}^\C_{X,\sigma}(\mathbf{z})$  as 
$\mathrm{H}^\C_{X,\sigma}(\mathbf{z}):= \sum_{k=1}^l \epsilon_k \Log(z_k),$
where $\epsilon_k$ is $1$ if the $k$-th cut corner lies on the left of $\sigma$ and $-1$ if it lies on the right. The \textit{angular holonomy} $\mathrm{H}^\R_{X,\sigma}(\alpha)$ of $\sigma$ is similarly defined, replacing the term $\Log(z_k)$ by the (abstract) angle  $d_k=\arg(z_k)=\Im (\Log (z_k))$  (which is of the form $a_{T_{j_k}}$, $b_{T_{j_k}}$ or $c_{T_{j_k}}$) lying in the $i$-th corner. The \textit{complex gluing edge equations} associated to $X$ consist in asking that the complex holonomies of each closed curve in $\partial M$ circling a vertex of the induced boundary triangulation are all equal to $2i\pi$, or in other words that
$$\forall e\in X^1, \omega_{X,\alpha}^\C(e) = 2i \pi.$$
The \textit{complex completeness equations} require that the complex holonomies of all curves generating the first homology $H_1(\partial M)$ vanish (when $M$ is of toroidal boundary).
Compare with Section \ref{sub:intro:FAMED} and Figure \ref{fig:41:cusp:triang:NZ}.

If $M$ is an orientable $3$-manifold with boundary consisting of tori, and  ideally triangulated by $X$, then an angle structure $\alpha \in \mathcal{A}_X$ corresponds to the complete hyperbolic metric on the interior of $M$ (which is unique) if and only if $\alpha$ satisfies the complex gluing edge equations and the complex completeness equations. 
We say that the triangulation $X$ is \textit{geometric} if such a unique $\alpha$ exists.

More generally, let $\partial M =\mathbb{T}_1 \coprod \dots \coprod \mathbb{T}_k$ and let $\gamma_j$ be an oriented normal closed curve in $\mathbb{T}_j$ for $j=1,\dots, k$. An angle structure $\alpha \in \mathcal{A}_X$ corresponds to the (possibly incomplete) hyperbolic metric on the interior of $M$ with $\mathrm{H}^\C_{X,\gamma_j}(\mathbf{z}) = i \theta_j$ for $j=1,\dots k$ if and only if $\alpha$ satisfies the complex gluing edge equations and the \emph{(complex) holonomy equations} $\mathrm{H}^\C_{X,\gamma_j}(\mathbf{z})  = i\theta_j$ for $j=1,\dots,k$. It is proved in \cite{Ko} that if $\theta_j <\pi$ for all $j$, then such a hyperbolic structure is unique if it exists.

\subsection{The classical dilogarithm}\label{Rogerdilog}

For the dilogarithm function, we will use the definition:
$$ \Li(z) := - \int_0^z \Log(1-u) \frac{du}{u} \ \ \ \textrm{for} \ z \in \C \setminus [1,\infty)$$
(see for example \cite{Za}).
For $z$ in the unit disk, $\Li(z)=\sum_{n\geq 1} n^{-2} z^n$.
We will use the following properties of the dilogarithm function, referring for example to \cite[Appendix A]{AH} for the proofs.

\begin{proposition}[Some properties of $\Li$]\label{prop:dilog}
\

\begin{enumerate}
\item (inversion relation) $$ \forall z \in \C \setminus [1,\infty), \
 \Li\left (\frac{1}{z}\right ) = - \Li(z) - \frac{\pi^2}{6} - \frac{1}{2}\Log(-z)^2.
$$
\item (integral form) For all $y \in \R +i(-\pi,\pi)$,
$$ \frac{-i}{2 \pi} \Li(-e^y) =
\int_{v \in \R + i 0^+}
\dfrac{\exp\left (-i \frac{y v}{\pi}\right )}{4 v^2 \sinh(v)} \,  dv.
$$
 In the previous formula and in the remainder of the paper, $\R + i 0^+$ denotes a contour in $\C$ that is deformed from the horizontal line $\R \subset \C$ by avoiding $0$ via the upper half-plane (with a small half-circle for example).
\end{enumerate}
\end{proposition}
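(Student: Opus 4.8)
The plan is to handle the two assertions by complementary elementary arguments: part~(1) by differentiating and evaluating at a single convenient point, and part~(2) by a contour-deformation/residue computation followed by analytic continuation across the strip $\R+i(-\pi,\pi)$.

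\emph{Part (1).} First I would restrict to the connected slit domain $\Omega=\C\setminus[0,\infty)$, on which $\Li(z)$, $\Li(1/z)$ and $\Log(-z)$ are all defined by their principal branches (note $z\mapsto 1/z$ maps $\Omega$ to itself). Set $F(z)=\Li(z)+\Li(1/z)+\tfrac{\pi^2}{6}+\tfrac12\Log(-z)^2$. From $\frac{d}{dz}\Li(z)=-\Log(1-z)/z$ one gets, by the chain rule, $\frac{d}{dz}\Li(1/z)=\Log\!\big(\tfrac{z-1}{z}\big)/z$, while $\frac{d}{dz}\big(\tfrac12\Log(-z)^2\big)=\Log(-z)/z$. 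Writing $\tfrac{z-1}{z}=(1-z)\cdot\tfrac1{-z}$ and using the branch identity $\Log\!\big(\tfrac{z-1}{z}\big)=\Log(1-z)-\Log(-z)$, valid on $\Omega$ (checked first on, say, the open lower half-plane where the argument bookkeeping is transparent, then propagated to all of $\Omega$ by the identity theorem), one finds $F'\equiv 0$ on $\Omega$. Hence $F$ is constant there; evaluating at $z=-1$, where $\Li(-1)=-\tfrac{\pi^2}{12}$ and $\Log(1)=0$, shows the constant is $0$, and continuity extends the identity to the remaining points of $\C\setminus[1,\infty)$ at which the statement is meaningful.

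\emph{Part (2).} Write $g(v)=\dfrac{e^{-iyv/\pi}}{4v^2\sinh v}$ and $I(y)=\int_{\R+i0^+}g(v)\,dv$. On the contour $|e^{-iyv/\pi}|=e^{\im(y)\Re(v)/\pi}$, so the hypothesis $\im(y)\in(-\pi,\pi)$ is precisely what makes $I(y)$ converge absolutely (the factor $1/\sinh v$ decays like $e^{-|\Re v|}$), and $I$ is holomorphic on that strip. Next I would fix $y$ with $\Re(y)<0$ and close the contour by a semicircle of radius $R_N=\pi(N+\tfrac12)$ in the upper half-plane; using $|\sinh v|\ge\max(|\sinh(\Re v)|,|\sin(\im v)|)$ one shows $|\sinh v|$ is bounded below by a positive constant independent of $N$ on $|v|=R_N$, and combined with the decay of $e^{-iyv/\pi}$ for $\Re(y)<0$ the semicircular arc contributes $o(1)$. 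The poles enclosed are the simple poles $v=i\pi n$, $n\ge 1$ (the triple pole at $v=0$ sits below the shifted contour, hence is not enclosed), and $\Res_{v=i\pi n}g=\dfrac{e^{yn}}{4(i\pi n)^2(-1)^n}=-\dfrac{1}{4\pi^2}\cdot\dfrac{(-e^y)^n}{n^2}$; the residue theorem then gives $I(y)=2\pi i\sum_{n\ge1}\Res_{v=i\pi n}g=\dfrac{-i}{2\pi}\sum_{n\ge1}\dfrac{(-e^y)^n}{n^2}=\dfrac{-i}{2\pi}\Li(-e^y)$. Since both sides are holomorphic on $\{\,|\im y|<\pi\,\}$ and agree on the half-strip $\Re(y)<0$, the identity theorem yields the formula on the whole strip.

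\emph{Main obstacle.} The only genuinely delicate steps are the branch-cut bookkeeping needed to justify $\Log\!\big(\tfrac{z-1}{z}\big)=\Log(1-z)-\Log(-z)$ on $\Omega$ in part~(1), and the uniform lower bound on $|\sinh v|$ along the circles $|v|=R_N$ that kills the closing arcs in part~(2); both are classical, but they must be executed with care to pin down signs and constants exactly. As a cross-check, part~(2) can instead be deduced from part~(1): closing the original contour downward (for $\Re(y)>0$) encircles also the triple pole at $v=0$, whose residue computes to $-\tfrac1{24}-\tfrac{y^2}{8\pi^2}$, together with a series summing to $\tfrac{i}{2\pi}\Li(-e^{-y})$, and the inversion relation in the form $\Li(-e^y)+\Li(-e^{-y})=-\tfrac{\pi^2}{6}-\tfrac{y^2}{2}$ turns this back into the stated integral formula.
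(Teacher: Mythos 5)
The paper does not give its own proof of this proposition: it simply refers the reader to \cite[Appendix A]{AH}, so there is no in-paper argument to compare against. Your proof, however, is correct and uses the two standard techniques one would expect to find in such a reference. For part (1), the ``differentiate and evaluate at one point'' strategy is sound: restricting to the connected slit domain $\C\setminus[0,\infty)$ (where all three logarithms are single-valued and where, as you note, the formula is in fact the only place it is meaningful) is the right move, the derivative computations are correct, and the branch identity $\Log\!\big(\tfrac{z-1}{z}\big)=\Log(1-z)-\Log(-z)$ holds on that domain because for $z\notin[0,\infty)$ the arguments of $1-z$ and $-z$ lie in the same open half of $(-\pi,\pi)$, so their difference never leaves $(-\pi,\pi)$; evaluating at $z=-1$ gives the constant $0$. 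For part (2), the residue computation at $v=i\pi n$ and the orientation bookkeeping are correct, the choice of radii $R_N=\pi(N+\tfrac12)$ with a uniform lower bound on $|\sinh v|$ is exactly what is needed to kill the closing arcs, the triple pole at $v=0$ is indeed not enclosed by a contour shifted into the upper half-plane, and analytic continuation from $\{\Re y<0\}$ to the full strip finishes the argument. The cross-check deducing part (2) from part (1) via the residue $-\tfrac1{24}-\tfrac{y^2}{8\pi^2}$ at $v=0$ and the reflection $\Li(-e^y)+\Li(-e^{-y})=-\tfrac{\pi^2}{6}-\tfrac{y^2}{2}$ also checks out. The only stylistic quibble is the closing phrase of part (1) about ``continuity extends the identity to the remaining points'': for $z\in(0,1)$ the formula is simply not meaningful (since $1/z\in(1,\infty)$), so there is nothing left to extend to; your own parenthetical already acknowledges this.
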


\subsection{The Bloch--Wigner function}

The \emph{Bloch--Wigner function} $D:\C \rightarrow \R $ defined by
\[
D(z) := \Im(\Li(z)) + \arg(1-z)\log \vert z \vert \quad \text{ if $z\in \C \smallsetminus \R$, and $0$ otherwise}
\]
is continuous on $\C$, and real-analytic on $\mathbb{C} \backslash \{0,1\}$ (see \cite[Section 3]{Za} for details). 
The Bloch-Wigner function plays a central role in hyperbolic geometry.  
The following result will be important for us (for a proof, see \cite{NZ}).

\begin{proposition}
Let $T$ be an ideal tetrahedron in $\mathbb{H}^3$ with complex shape structure $z$. Then, its volume is given by
\[
\Vol(T)= D(z) = D \left( \frac{z-1}{z} \right) = D \left( \frac{1}{1-z} \right).
\]
\end{proposition}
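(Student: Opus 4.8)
The plan is to separate the claim into two parts: the three-fold symmetry $D(z) = D\!\left(\frac{z-1}{z}\right) = D\!\left(\frac{1}{1-z}\right)$ of the Bloch--Wigner function, which is a pure functional-equation statement, and the identification $\Vol(T) = D(z)$. For the symmetry I would take imaginary parts in the functional equations of $\Li$. From the inversion relation of Proposition~\ref{prop:dilog}(1), writing $D(z) = \Im(\Li(z)) + \arg(1-z)\log|z|$ and simplifying the boundary terms gives $D(z) + D(1/z) = 0$; applying the same manipulation to Euler's reflection identity $\Li(z) + \Li(1-z) = \frac{\pi^2}{6} - \Log(z)\Log(1-z)$ gives $D(z) + D(1-z) = 0$. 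The involutions $z\mapsto 1/z$ and $z\mapsto 1-z$ generate the anharmonic group $\cong S_3$, and $z\mapsto \frac{1}{1-z}$ and $z\mapsto \frac{z-1}{z}$ are products of an even number of them, so $D$ is invariant under both. (These symmetries of $D$ are recorded in \cite[Section~3]{Za} and may simply be quoted.)

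For the identification I would use the formula $\Vol(T) = \lambda(a)+\lambda(b)+\lambda(c)$ already recalled in Subsection~\ref{sub:volume}, so that it suffices to show $D(z) = \lambda(\arg z) + \lambda(\arg z') + \lambda(\arg z'')$ for $z$ in the open upper half-plane $\mathbb{H} = \R + i\R_{>0}$. Here $z,z',z'' \in \mathbb{H}$, and by Thurston's equations (Subsection~\ref{sub:thurston}) the dihedral angles of $T$ are $a = \arg z$, $b = \arg z''$, $c = \arg z'$, with $a+b+c = \pi$ since $zz'z'' = -1$. A direct differentiation of the definition of $D$, using $\Li'(u) = -\Log(1-u)/u$, gives the $1$-form $dD = \log|z|\,d\arg(1-z) - \log|1-z|\,d\arg z$ on $\mathbb{H}$; substituting $1-z = 1/z'$ (so $\arg(1-z) = -c$ and $\log|1-z| = \log(\sin a/\sin b)$) together with $\log|z| = \log(\sin c /\sin b)$ turns this into $dD = \log\frac{\sin b}{\sin a}\,da + \log\frac{\sin b}{\sin c}\,dc$. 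On the other hand, from $\lambda'(x) = -\log|2\sin x|$ one gets $d\big(\lambda(a)+\lambda(b)+\lambda(c)\big) = \big(\lambda'(a)-\lambda'(b)\big)\,da + \big(\lambda'(c)-\lambda'(b)\big)\,dc = \log\frac{\sin b}{\sin a}\,da + \log\frac{\sin b}{\sin c}\,dc$, the same $1$-form. Hence $D(z) - \big(\lambda(a)+\lambda(b)+\lambda(c)\big)$ is constant on the connected set $\mathbb{H}$, and it vanishes there because as $z\to 0$ within $\mathbb{H}$ (a degenerate flat tetrahedron) both terms tend to $0$, using the continuity of $D$ and of $\lambda$ and the values $\lambda(0) = \lambda(\pi) = 0$.

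Putting the two parts together yields $\Vol(T) = \lambda(a)+\lambda(b)+\lambda(c) = D(z)$ and then $D(z) = D(z') = D(z'') = D\!\left(\frac{1}{1-z}\right) = D\!\left(\frac{z-1}{z}\right)$, which is the statement. The main obstacle I anticipate is not conceptual but bookkeeping: tracking the $\arg$ and branch corrections when converting the dilogarithm functional equations into those for $D$, and checking that the dihedral angles appearing in the Lobachevsky volume formula are exactly $\arg z, \arg z', \arg z''$ under the orientation and labelling conventions of Subsection~\ref{sub:thurston}; once those identifications are nailed down, the differential computation is mechanical. As an alternative to the differential argument one could observe that ideal-tetrahedron volume is additive under $2$--$3$ Pachner moves, hence satisfies the five-term relation, so it is a constant multiple of $D$ by the characterisation of $D$, and fix the constant on the regular ideal tetrahedron $z = e^{i\pi/3}$; I would nevertheless present the differential version, as it is more self-contained (and avoids invoking a rigidity statement for $D$).
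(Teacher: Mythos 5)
The paper gives no proof of this proposition; it simply cites Neumann--Zagier \cite{NZ}, so there is no argument in the text to compare against. Your proof is correct and reproduces the standard calculation in a self-contained way: you take the Lobachevsky-function formula $\Vol(T)=\lambda(a)+\lambda(b)+\lambda(c)$ as already recalled in the paper, reduce the claim to $D(z)=\lambda(a)+\lambda(b)+\lambda(c)$ by matching $1$-forms, and derive the threefold symmetry of $D$ from the inversion and reflection relations of $\Li$ by taking imaginary parts (the two generating involutions each flip the sign of $D$, and the two maps in the statement are products of two of them, hence preserve $D$). The differential bookkeeping checks out with the conventions of the paper's Thurston section: from $1-z=1/z'$ one gets $\arg(1-z)=-c$ and $\log|1-z|=\log(\sin a/\sin b)$, and together with $\log|z|=\log(\sin c/\sin b)$ this turns $dD=\log|z|\,d\arg(1-z)-\log|1-z|\,d\arg z$ into $\log\tfrac{\sin b}{\sin a}\,da+\log\tfrac{\sin b}{\sin c}\,dc$, which indeed equals $d(\lambda(a)+\lambda(b)+\lambda(c))$. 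The one place you should tighten the wording is the endgame: if $z\to0$ along a generic ray $\arg z=\phi$ in $\mathbb H$, the angles tend to $(a,b,c)\to(\phi,\pi-\phi,0)$, so concluding that the Lobachevsky sum vanishes in the limit uses $\lambda(\phi)+\lambda(\pi-\phi)=0$, i.e.\ oddness together with $\pi$-periodicity of $\lambda$, not just $\lambda(0)=\lambda(\pi)=0$ as you state; alternatively approach $0$ tangentially to $\R_{>0}$, so that $a,c\to0$ and $b\to\pi$, and then your stated values do suffice. This is a small clarification, not a gap.
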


\subsection{Faddeev's quantum dilogarithm}

Recall \cite{AK} that for $\hbar >0$ and $\B >0$ such that $$(\B+\B^{-1}) \sqrt{\hbar} = 1,$$
 \emph{Faddeev's quantum dilogarithm} $\Phi_\B$ is the holomorphic function on $\R + i \left (\frac{-1}{2 \sqrt{\hbar}}, \frac{1}{2 \sqrt{\hbar}}\right )$ given by
$$
\Phi_\B(z) = \exp\left (
\frac{1}{4} \int_{w \in \R + i 0^+}
\dfrac{e^{-2 i z w} dw}{\sinh(\B w) \sinh({\B}^{-1}w) w}
\right ) \ \ \ \ \text{for} \ z \in \R + i \left (\frac{-1}{2 \sqrt{\hbar}}, \frac{1}{2 \sqrt{\hbar}}\right ),
$$
and extended to a meromorphic function for $z\in \C$ via the functional equation 
$$\Phi_\B\left (z-i \frac{\B^{\pm  1}}{2}\right )= \left (1+e^{2\pi \B^{\pm 1} z}\right )
\Phi_\B\left (z + i \frac{\B^{\pm 1}}{2}\right ).
$$
 Recall that $\R + i 0^+$ denotes a contour in $\C$ that is deformed from the horizontal line $\R \subset \C$ by avoiding $0$ by above.

Note that $\Phi_\B$ depends only on $\hbar = \frac{1}{(\B+\B^{-1})^2}$. Furthermore, as a consquence of the functional equation, the poles of $\Phi_\B$  lie on $ i \left [\frac{1}{2 \sqrt{\hbar}}, \infty\right ) $ and the zeroes lie symmetrically on $i \left (-\infty, \frac{-1}{2 \sqrt{\hbar}}\right ]$.  We stress the fact that in this paper we always assume that $\B$ is a real positive number, which simplifies several formulas in \cite[Appendix A]{AK}; notably the poles and zeroes live in the imaginary line instead of in sectors.

We now list several useful properties of Faddeev's quantum dilogarithm. We refer to \cite[Appendix A]{AK} for these properties (and several more), and to \cite[Lemma 3]{AH} for an alternate proof of the semi-classical limit property.

\begin{proposition}[Some properties of $\Phi_\B$]\label{prop:quant:dilog}
\

\begin{enumerate}
\item (inversion relation) For any $\B \in \R_{>0}$ and any  $z \in \R + i \left (\frac{-1}{2 \sqrt{\hbar}}, \frac{1}{2 \sqrt{\hbar}}\right )$, 
$$\Phi_\B(z) \Phi_\B(-z) = e^{i\frac{\pi}{12}(\B^2 + \B^{-2})} e^{i \pi z^2}.$$
\item (unitarity) For any $\B \in \R_{>0}$ and any  $z \in \R + i \left (\frac{-1}{2 \sqrt{\hbar}}, \frac{1}{2 \sqrt{\hbar}}\right )$, 
$$\overline{\Phi_\B(z)} = \frac{1}{\Phi_\B(\overline{z})}.$$
\item (semi-classical limit in $\B$) For any $z \in \R + i \left (-\pi,\pi \right )$,
$$\Phi_\B\left (\frac{z}{2 \pi \B}\right ) = \exp\left (\frac{-i}{2 \pi \B^2} \Li (- e^z)\right ) \left ( 1 + O_{\B \to 0^+}(\B^2)\right ).$$
\item (behavior at infinity) For any  $\B \in \R_{>0}$,
\begin{align*}
 \Phi_\B(z) \ \ \underset{\Re(z)\to -\infty}{\sim} & \ \ 1, \\
 \Phi_\B(z) \ \  \underset{\Re(z)\to \infty}{\sim} & \ \ e^{i\frac{\pi}{12}(\B^2 + \B^{-2})} e^{i \pi z^2}.
\end{align*}
In particular, for any  $\B \in \R_{>0}$ and any $d \in \left (\frac{-1}{2 \sqrt{\hbar}}, \frac{1}{2 \sqrt{\hbar}}\right )$,
\begin{align*}
|\Phi_\B(x+id) | \ \ \underset{\R \ni x \to -\infty}{\sim} & \ \ 1, \\
|\Phi_\B(x+id) | \ \  \underset{\R \ni x \to +\infty}{\sim} & \ \ e^{-2 \pi x d}.
\end{align*}
\end{enumerate}
\end{proposition}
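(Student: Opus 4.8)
The plan is to derive all four statements from the contour-integral definition of $\Phi_\B$ on the strip $\R+i\!\left(\tfrac{-1}{2\sqrt\hbar},\tfrac{1}{2\sqrt\hbar}\right)$; these are classical facts, recorded with full proofs in \cite[Appendix~A]{AK} (and, for the semi-classical limit, in \cite[Lemma~3]{AH}), so in the paper we would simply cite those references. Still, the arguments run as follows. For the inversion relation (1), I would form $\log\Phi_\B(z)+\log\Phi_\B(-z) = \tfrac14\int_{\R+i0^+}\frac{2\cos(2zw)\,dw}{\sinh(\B w)\sinh(\B^{-1}w)\,w}$; the integrand is odd in $w$, so the symmetric principal value along $\R$ vanishes and the integral collapses to $-i\pi$ times the coefficient of $w^{-1}$ in the Laurent expansion at $0$ (the $w^{-3}$ term contributes nothing to the small half-circle around the origin, since a half-turn integral of $e^{-2i\theta}$ vanishes). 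Reading that coefficient off from $\sinh(\B w)\sinh(\B^{-1}w)\,w = w^3\big(1+\tfrac{\B^2+\B^{-2}}{6}w^2+\cdots\big)$ and $2\cos(2zw)=2\big(1-2z^2w^2+\cdots\big)$ gives $-4z^2-\tfrac{\B^2+\B^{-2}}{3}$, hence $\log\Phi_\B(z)+\log\Phi_\B(-z)=i\pi z^2+\tfrac{i\pi}{12}(\B^2+\B^{-2})$, which exponentiates to (1). For the unitarity (2), I would conjugate the defining integral, use that $\B$ is real so that $\overline{\sinh(\B\bar w)}=\sinh(\B w)$ and likewise for $\B^{-1}$, and then substitute $w\mapsto -w$ to send the resulting $\R-i0^+$ contour back to $\R+i0^+$ with an overall sign change, landing on $\overline{\log\Phi_\B(z)}=-\log\Phi_\B(\bar z)$, i.e.\ $\overline{\Phi_\B(z)}=\Phi_\B(\bar z)^{-1}$.

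For the semi-classical limit (3) I would substitute $w=\B v$ in $\log\Phi_\B\!\big(\tfrac{z}{2\pi\B}\big)=\tfrac14\int_{\R+i0^+}\frac{e^{-izw/(\pi\B)}\,dw}{\sinh(\B w)\sinh(\B^{-1}w)\,w}$ to obtain $\tfrac14\int_{\R+i0^+}\frac{e^{-izv/\pi}\,dv}{\sinh(\B^2 v)\sinh(v)\,v}$, then write $\tfrac{1}{\sinh(\B^2 v)}=\tfrac{1}{\B^2 v}+O(\B^2 v)$ and identify the leading term with $\tfrac{-i}{2\pi\B^2}\Li(-e^z)$ by means of the integral representation $\tfrac{-i}{2\pi}\Li(-e^{z})=\int_{\R+i0^+}\frac{e^{-izv/\pi}}{4v^2\sinh(v)}\,dv$ from Proposition~\ref{prop:dilog}(2). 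The restriction $z\in\R+i(-\pi,\pi)$ is precisely what makes $\frac{e^{-izv/\pi}}{v^2\sinh(v)}$ absolutely integrable on the contour — the growth $e^{(\Im z)v/\pi}$ as $v\to\pm\infty$ is beaten by the decay of $1/\sinh(v)$ exactly because $|\Im z|<\pi$. I expect the main obstacle to be the step from this pointwise expansion to a remainder bound that is uniform in $\B$, i.e.\ to the rigorous meaning of the stated $1+O(\B^2)$; this uniform estimate is the substantive point that we would import from \cite[Lemma~3]{AH}.

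For the behaviour at infinity (4), when $\Re(z)\to-\infty$ I would close the defining contour with a large semicircle in the \emph{upper} half-plane: because $\R+i0^+$ runs above $w=0$, the origin is not enclosed, and the enclosed poles all lie on $i(0,\infty)$, where $e^{-2izw}\to 0$; combined with the bound $|\Im z|<\tfrac1{2\sqrt\hbar}$ which suppresses the arc, this forces $\log\Phi_\B(z)\to 0$, so $\Phi_\B(z)\to 1$. The case $\Re(z)\to+\infty$ then follows immediately from (1): $\Phi_\B(z)=e^{i\pi z^2}e^{i\pi(\B^2+\B^{-2})/12}/\Phi_\B(-z)$ and $\Phi_\B(-z)\to 1$. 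The two modulus statements come out by setting $z=x+id$: as $x\to-\infty$ one gets $|\Phi_\B(x+id)|\to 1$, and as $x\to+\infty$, since $\B$ real makes $e^{i\pi(\B^2+\B^{-2})/12}$ unimodular while $\Re\big(i\pi(x+id)^2\big)=-2\pi x d$, one gets $|\Phi_\B(x+id)|\sim e^{-2\pi x d}$.
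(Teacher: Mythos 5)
Your proposal matches the paper, which gives no proof here and simply cites \cite[Appendix A]{AK} for all four properties (and \cite[Lemma 3]{AH} for the semi-classical limit); your supplementary sketches are correct and follow the standard derivations. The one step in (4) that deserves a little more care is the claim that the large upper-half-plane semicircle contribution vanishes as $\Re(z)\to-\infty$: that arc passes arbitrarily close to the poles of the integrand on $i\R_{>0}$, and it is cleaner to instead shift $\R+i0^+$ upward to $\R+i\delta$ for some $0<\delta<\min(\pi\B,\pi\B^{-1})$ (so no poles are crossed), on which $|e^{-2izw}|=e^{2\delta\,\Re z}\,e^{2\,\Im(z)\,\Re(w)}$ carries a uniform $e^{2\delta\,\Re z}\to 0$ factor while the strip condition $|\Im z|<(\B+\B^{-1})/2$ keeps the remaining integrand in $L^1$ on the shifted contour.
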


Let us expand on Proposition \ref{prop:quant:dilog} (3), via the following \textit{uniform} estimate in the semi-classical limit.

\begin{proposition}\label{prop:quant:dilog:uniform}
There exists constants $C, C'>0$ such that for all $\delta \in (0,\pi)$, for all $\B \in(0,\sqrt{\delta})$, for all 
$y \in \R+i[-(\pi-\delta),\pi-\delta]$, we have:
    \begin{enumerate}
        \item (uniform semi-classical limit with $\B$)
        $$
        e^{-
        \left (\frac{C}{\delta} + C' \right) \B^2}
        \leqslant
        \left | 
        \Phi_\B\left (\frac{y}{2 \pi \B}\right )  \exp\left (\frac{i}{2 \pi \B^2} \Li (- e^y)\right )
        \right | \leqslant 
        e^{
        \left (\frac{C}{\delta} + C' \right) \B^2},
        $$
        \item similar inequalities hold if one replaces $y$ with $y(1+\B^2)$:
        $$
        e^{-
        \left (\frac{C}{\delta} + C' \right) \B^2}
        \leqslant
        \left | 
        \Phi_\B\left (\frac{y(1+\B^2)}{2 \pi \B}\right )  \exp\left (\frac{i}{2 \pi \B^2} \Li (- e^{y(1+\B^2)})\right )
        \right | \leqslant 
        e^{
        \left (\frac{C}{\delta} + C' \right) \B^2},
        $$
        \item (comparison of classical dilogarithms with $\B$ and $\hbar$)
        $$
        e^{-C_\delta}
        \leqslant
        \left | 
        \exp\left (\frac{-i}{2 \pi \B^2}(1+\B^2)^2 \Li (- e^{y})\right )
        \exp\left (\frac{i}{2 \pi \B^2} \Li (- e^{y(1+\B^2)})\right )
        \right | \leqslant 
        e^{C_\delta},
        $$
        where $C_\delta>0$ is a constant independant of $\B$ (see \cite[Lemma 7.17]{BAGPN}).
        \item (uniform semi-classical limit with $\hbar$)
        $$
        e^{-
        \left (\frac{C}{\delta} + C' \right) \B^2-C_\delta}
        \leqslant
        \left | 
        \Phi_\B\left (\frac{y}{2 \pi \sqrt{\hbar}}\right )  \exp\left (\frac{i}{2 \pi \hbar} \Li (- e^{y})\right )
        \right | \leqslant 
        e^{
        \left (\frac{C}{\delta} + C' \right) \B^2+C_\delta},
        $$
    \end{enumerate}
Moreover, the constants $C, C'$ can be computed explicitly (see \cite[Section 7]{BAGPN}).
\end{proposition}

\begin{proof}
    The inequality (1) follows directly from \cite[Section 7]{BAGPN}. Indeed, Lemmas 7.13 and 7.14 in \cite{BAGPN} deal with $\textstyle \Phi_{\B}(\frac{y}{2\pi \B})$, with the assumptions $\Im(y)\in [\delta,\pi-\delta]$ and $\Im(y)\in [-\pi+\delta,-\delta]$  respectively. It is easy to see that these assumptions, keeping the same conclusions, can be extended to $\Im(y)\in [0,\pi-\delta]$ and $\Im(y)\in [-\pi+\delta,0]$.

    As for (2), let us prove it follows from (1). Inequality (1) holds in particular for all 
    $y \in \R+i[-\frac{(\pi-\delta)}{1+\B^2},\frac{(\pi-\delta)}{1+\B^2}]$, and for those $y$ we obtain:
    $$
    e^{-\left (\frac{C}{\pi-\frac{(\pi-\delta)}{1+\B^2}} + C' \right) \B^2} \leqslant 
        \left | 
        \Phi_\B\left (\frac{y(1+\B^2)}{2 \pi \B}\right )  \exp\left (\frac{i}{2 \pi \B^2} \Li (- e^{y(1+\B^2)})\right )
        \right | \leqslant e^{\left (\frac{C}{\pi-\frac{(\pi-\delta)}{1+\B^2}} + C' \right) \B^2}.
        $$
    We conclude by remarking that $\pi-\frac{(\pi-\delta)}{1+\B^2}>\pi-(\pi-\delta)=\delta$, and thus 
    (2) follows.

    Inequalities of (3) follow almost directly from \cite[Lemma 7.17]{BAGPN}, except that the proof of \cite[Lemma 7.17]{BAGPN} can be immediately generalized to $\Im(y)\in [0,\pi-\delta]$ and $\Im(y)\in [-\pi+\delta,0]$ (like in (1)). Moreover, the upper bound $c_\delta$ on $\B$ in the proof of \cite[Lemma 7.17]{BAGPN} is smaller than $\sqrt{\delta}$, so we use this bound instead to avoid introducing yet another notation.

    Finally, (4) is a direct application of (2) and (3), using that $\hbar^{-1/2}=\B+\B^{-1}$.
\end{proof}

To study the asymptotics of partition functions in $\hbar$, we will also need the following result about the semi-classical limit of the quantum dilogarithm function in $\hbar$, going one step further into the Taylor expansion than Proposition \ref{prop:quant:dilog:uniform} but losing an uniform bound on the error term over non-compact contours. The result has also been used in \cite[Lemma 3]{AK} by Andersen and Kashaev when they prove their conjecture for $4_1$ and $5_2$. 
\begin{proposition}\label{semihbar} (semi-classical limit in $\hbar$) For any $z \in \R + i \left (-\pi,\pi \right )$,
$$\Phi_\B\left (\frac{z}{2 \pi \sqrt{\hbar}}\right ) = \exp\left (\frac{i}{2\pi}z\log(1+e^z) + \frac{i}{\pi}\Li \left(- e^{z}\right) - \frac{i}{2 \pi \hbar}  \Li \left(- e^{z}\right) \right ) \left ( 1 + O_{\hbar \to 0^+}(\hbar)\right ).$$
\end{proposition}
\begin{proof}
For any $z \in \R + i \left (-\pi,\pi \right )$, by Proposition \ref{prop:quant:dilog}, we have 
$$\Phi_\B\left (\frac{z}{2 \pi \B}\right ) = \exp\left (\frac{-i}{2 \pi \B^2} \Li (- e^z)\right ) \left ( 1 + O_{\B \to 0^+}(\B^2)\right ).$$
In particular, since $(\sqrt{\hbar})^{-1} = \B + \B^{-1}$, we have
\begin{align*}
   \Phi_\B\left (\frac{z}{2 \pi \sqrt{\hbar}}\right ) 
   = \Phi_\B\left (\frac{z(1+\B^2)}{2 \pi \B}\right ) 
   = \exp\left (\frac{-i}{2 \pi \B^2} \Li \left(- e^{(1+\B^2)z}\right) \right ) \left ( 1 + O_{\B \to 0^+}(\B^2)\right ).
\end{align*}
By Taylor's theorem, we have
$$
\Li \left(- e^{(1+\B^2)z}\right) 
= \Li \left(- e^{z}\right) 
- \B^2 z\log(1+e^{z}) + O_{\B \to 0^+}(\B^4).
$$
Thus,
\begin{align*}
   \Phi_\B\left (\frac{z}{2 \pi \sqrt{\hbar}}\right ) 
   = &\ \exp\left (\frac{i}{2\pi}z\log(1+e^z)  - \frac{i}{2 \pi \B^2}  \Li \left(- e^{z}\right) \right ) \left ( 1 + O_{\B \to 0^+}(\B^2)\right ).
\end{align*}
Next, since
$$
\frac{1}{\hbar} = \B^2 + 2 + \frac{1}{\B^2}, 
$$
we can write
\begin{align*}
\Phi_\B\left (\frac{z}{2 \pi \sqrt{\hbar}}\right ) 
   = &\ \exp\left (\frac{i}{2\pi}z\log(1+e^z) + \frac{i}{\pi}\Li \left(- e^{z}\right) - \frac{i}{2 \pi \hbar}  \Li \left(- e^{z}\right) \right ) \left ( 1 + O_{\B \to 0^+}(\B^2)\right ).
\end{align*}
Finally, since 
$$
\lim_{\B\to 0} \frac{\B^2}{\hbar}
= \lim_{\B\to 0}  (\B^4 + 2\B^2 + 1 ) 
= 1,
$$
we can replace $O_{\B \to 0^+}(\B^2)$ by $O_{\hbar \to 0^+}(\hbar)$. This completes the proof.
\end{proof}

\subsection{The Teichm\"uller TQFT of Andersen--Kashaev}\label{sub:AK:TQFT}

In this section we follow \cite{AK, KaWB, Kan}. Let $\mathcal{S}(\R^d)$ denote the Schwartz space of smooth 
functions from $\R^d$ to $\C$ that are rapidly decaying (in the sense that any derivative decays faster than any negative power of the norm of the input). 
Its continuous dual $\mathcal{S}'(\R^d)$ is the space of tempered distributions.

Recall that the \emph{Dirac delta function} is the tempered distribution $\mathcal{S}(\R) \to \C$ denoted by $\delta(x)$ or $\delta$ and defined by
$
\delta(x) \cdot f:= \int_{x \in \R} \delta(x) f(x) dx =
f(0)
$ for all $f \in \mathcal{S}(\R)$ (where $x \in \R$ denotes the argument of $f\in \mathcal{S}(\R)$).
Furthermore, we have the equality of tempered distributions
\[
\delta(x)=\int_{w \in \R} e^{-2 i \pi  x w} \,dw,
\] 
in the sense that for all $f \in \mathcal{S}(\R)$, 
$$
\left (\int_{w \in \R} e^{-2 i \pi x w} \,dw\right ) (f) =
\int_{x \in \R} \int_{w \in \R} e^{-2 i \pi x w} f(x)  \,dw \, dx \ = f(0) = \delta(x) \cdot f.
$$
The second equality follows from applying the Fourier transform $\mathcal{F}$ twice and using the fact that $\mathcal{F}(\mathcal{F}(f))(x) = f(-x)$ for $f\in \mathcal{S}(\R), x \in \R$. Recall also that the definition of the Dirac delta function and the previous argument have multi-dimensional analogues (see for example \cite{Kan} for details).

Given a triangulation $X$, 
writing $X^k$ for its collection of $k$-cells ($k\in \{0,1,2,3\}$), we assign to the tetrahedra $T_1, \ldots,T_N \in X^3$ formal real variables $t_1, \ldots, t_N$. 
We name $\mathsf{t}\colon T_j \mapsto t_{j}$ the corresponding bijection, and $\mathbf{t} = (t_{1},\ldots,t_{N})$ the corresponding
formal vector in $\R^{X^{3}}$.

Recall the notation $x_i(T) \in X^2$ for the $i$-th face ($i\in \{0,1,2,3\}$) of the tetrahedron $T\in X^3$.

We now define the kinematical kernel of $X$, which is a tempered distribution. In many cases of interest, a distribution-free formula holds (Lemma~\ref{lem:dirac} below)  and might be used as an alternate definition. 
Note that our notation is related to the notation in \cite{BAGPN} by the change of variables $(t_1,\dots, t_N) \to (\epsilon(T_1)t_1,\dots, \epsilon(T_N)t_N)$, which only induces a sign change and does not change the modulus of the partition function.
\begin{definition}
Let $X$ be a triangulation such that $H_2(M_X\smallsetminus X^0,\Z)=0$. The \textit{kinematical kernel of $X$} is a tempered distribution $\mathcal{K}_X \in \mathcal{S}'\left (\R^{X^{3}}\right )$ defined by the integral
\begin{align*}
    &\ \mathcal{K}_X(\mathbf{t}) \\
    =&\  \int_{\mathbf{x} \in \R^{X^{2}}} d\mathbf{x} \prod_{T \in X^3} e^{ 2 i \pi  x_0(T) \mathsf{t}(T)}
\delta\left ( x_0(T)- x_1(T)+ x_2(T)\right )
\delta\left ( x_2(T)- x_3(T)+ \varepsilon(T)\mathsf{t}(T)\right )
\end{align*}
where, with a slight abuse of notation, $x_i(T)$ 
refers to the $x_i(T)$-th component of $\mathbf{x}\in \R^{X^2}$.
(This convention, of denoting by $x_i(T)$ both a $2$-cell and the formal variable associated to it, is taken from~\cite{AK}: it will help keep our formulas short.)
\end{definition}

Essentially, if $\pi : \R^{X^2 \cup X^3} \rightarrow \R^{X^3}$ denotes the canonical projection, then
 $\mathcal{K}_X(\mathbf{t})$ associates to a Schwartz function $f:\R^{X^3} \rightarrow \R$ the (normalized) integral,
 over the affine subspace of $\R^{X^2 \cup X^3}$ where the arguments of the $\delta$'s vanish,
  of the product $(f \circ \pi) \cdot g$, where $g:\R^{X^2 \cup X^3} \rightarrow \R$ is the exponential of a certain quadratic form.
  
More formally, one
should understand the integral of the previous formula as the following equality of tempered distributions, similarly as above ( ${\!\top}$ denoting the transpose):
$$
\mathcal{K}_X(\mathbf{t}) =
\int_{\mathbf{x} \in \R^{X^{2}}} d\mathbf{x} 
\int_{\mathbf{w} \in \R^{2 N}} d\mathbf{w} \
e^{ 2 i \pi \mathbf{t}^{\!\top} \mathcal{R}} \mathbf{x}
e^{ -2 i \pi \mathbf{w}^{\!\top} \mathcal{A}} \mathbf{x}
e^{ -2 i \pi \mathbf{w}^{\!\top} \mathcal{B}} \mathbf{t} \
\in \mathcal{S}'\left (\R^{X^{3}}\right ),
$$
where
$\mathbf{w}=(w_1,\ldots,w_N,w'_1, \ldots,w'_N)$ is a vector of $2N$ new real variables, such that $w_j,w'_j$ are associated to 
$\delta\left ( x_0(T_j)- x_1(T_j)+ x_2(T_j)\right )$ and
$\delta\left ( x_2(T_j)- x_3(T_j)+ \mathsf{t}(T_j)\right )$, and where
 $\mathcal{R,A,B}$ are matrices with integer coefficients depending on the values $x_k(T_j)$, i.e.\ on the combinatorics of the face gluings. More precisely, the rows (resp.\ columns) of $\mathcal{R}$ are indexed by the vector of tetrahedron variables $\mathbf{t}$ (resp.\ of face variables $\mathbf{x}$) and $\mathcal{R}$ has a coefficient $1$ at coordinate $(t_j,x_0(T_j))$ and zero everywhere else; $\mathcal{B}$ is indexed by  $\mathbf{w}$ (rows) and  $\mathbf{t}$ (columns) and has a $1$ at the coordinate $(w'_j,t_j)$; finally, $\mathcal{A}$ is such that $\mathcal{A} \mathbf{x} + \mathcal{B}  \mathbf{t}$ is a column vector indexed by  $\mathbf{w}$ containing the values 
 $\left (x_0(T_j)- x_1(T_j)+ x_2(T_j)\right )_{1\leq j \leq N}$ followed by $\left (x_2(T_j)- x_3(T_j)+ \varepsilon(T_j) t_j \right)_{1\leq j \leq N}$.

\begin{remark}
    The matrices $\mathcal{R,A,B}$
    can also be defined directly from the boundary maps $x_0, x_1, x_2, x_3$, as in Section \ref{sub:intro:FAMED}: we have $\mathcal{R} = \mathcal{X}_0$, \ 
    $\mathcal{A} = \begin{pmatrix}
    \mathcal{X}_0-\mathcal{X}_1+\mathcal{X}_2\\\mathcal{X}_2-\mathcal{X}_3
\end{pmatrix}$ and
     $\mathcal{B}:=\begin{pmatrix}
    0_N \\\mathcal{E}
\end{pmatrix}$. See also Figure \ref{fig:41:face:matrices} for the values of these matrices for the figure-eight knot complement.
\end{remark}

\begin{lemma}\label{lem:dirac}\cite{BAGPN}
If the $2N\times 2N$ matrix $\mathcal{A}$ in the previous formula is invertible, then the kinematical kernel is simply a bounded function given by:
$$ \mathcal{K}_X(\mathbf{t}) = \frac{1}{| \det(\mathcal{A}) |} e^{ 2 i \pi \mathbf{t}^{\!\top} Q \mathbf{t}},
$$
where $Q
=\frac{-1}{2}\big(  \mathcal{X}_0 \mathcal{A}^{-1} \mathcal{B} + ( \mathcal{X}_0 \mathcal{A}^{-1} \mathcal{B})^{\!\top} \big)
$.
\end{lemma}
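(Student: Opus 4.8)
The plan is to unwind the definition of the kinematical kernel as a Gaussian-type integral over $\mathbf{x}$ and $\mathbf{w}$ and integrate out the auxiliary variables one group at a time. Recall that
$$
\mathcal{K}_X(\mathbf{t}) =
\int_{\mathbf{x} \in \R^{X^{2}}} d\mathbf{x}
\int_{\mathbf{w} \in \R^{2 N}} d\mathbf{w} \
e^{ 2 i \pi \mathbf{t}^{\!\top} R \mathbf{x}}
e^{ -2 i \pi \mathbf{w}^{\!\top} A \mathbf{x}}
e^{ -2 i \pi \mathbf{w}^{\!\top} B \mathbf{t}},
$$
so the only dependence on $\mathbf{w}$ is through the linear form $\mathbf{w}^{\!\top}(A\mathbf{x}+B\mathbf{t})$. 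First I would perform the $\mathbf{w}$-integration: using the multidimensional version of the identity $\int_{\R^{2N}} e^{-2\pi i \mathbf{w}^{\!\top}\mathbf{v}}\,d\mathbf{w} = \delta^{2N}(\mathbf{v})$ recalled just before the definition of $\mathcal{K}_X$, this produces the distribution $\delta\left(A\mathbf{x}+B\mathbf{t}\right)$ (a product of $2N$ one-dimensional Dirac deltas), recovering exactly the original distributional formula for $\mathcal{K}_X$.

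Next I would carry out the $\mathbf{x}$-integration against $\delta\left(A\mathbf{x}+B\mathbf{t}\right)$. Since $A$ is a square $2N\times 2N$ matrix and is assumed invertible, the affine subspace $\{A\mathbf{x}+B\mathbf{t}=0\}$ is the single point $\mathbf{x} = -A^{-1}B\mathbf{t}$. The change of variables $\mathbf{u} = A\mathbf{x}+B\mathbf{t}$ has constant Jacobian $|\det A|$, so $\delta\left(A\mathbf{x}+B\mathbf{t}\right)$ integrates to $\tfrac{1}{|\det A|}$ times evaluation at $\mathbf{x} = -A^{-1}B\mathbf{t}$. Substituting this value into the remaining exponential $e^{2 i \pi \mathbf{t}^{\!\top} R \mathbf{x}}$ yields
$$
\mathcal{K}_X(\mathbf{t}) = \frac{1}{|\det A|}\, e^{-2 i \pi \mathbf{t}^{\!\top} R A^{-1} B\, \mathbf{t}}.
$$
Finally, because $\mathbf{t}^{\!\top} M \mathbf{t} = \mathbf{t}^{\!\top}\left(\tfrac12(M+M^{\!\top})\right)\mathbf{t}$ for any square matrix $M$, the quadratic form in the exponent may be symmetrized: with $Q = \tfrac{-1}{2}\left(RA^{-1}B + (RA^{-1}B)^{\!\top}\right)$ we get $-2i\pi\,\mathbf{t}^{\!\top}RA^{-1}B\,\mathbf{t} = 2i\pi\,\mathbf{t}^{\!\top}Q\,\mathbf{t}$, which is the claimed formula; the resulting function of $\mathbf{t}$ is bounded since the exponent is purely imaginary.

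The main subtlety is not the linear algebra but the distributional bookkeeping: one must justify that integrating a tempered distribution of the form $e^{2i\pi \mathbf{t}^{\!\top} R\mathbf{x}}\delta\left(A\mathbf{x}+B\mathbf{t}\right)$ against a Schwartz test function $f(\mathbf{t})$ genuinely reduces, via the (linear, invertible) change of variables $(\mathbf{t},\mathbf{x}) \mapsto (\mathbf{t}, A\mathbf{x}+B\mathbf{t})$, to the absolutely convergent expression $\tfrac{1}{|\det A|}\int f(\mathbf{t}) e^{-2i\pi \mathbf{t}^{\!\top} RA^{-1}B\mathbf{t}}\,d\mathbf{t}$ — i.e. that pushing the delta forward and collapsing it is legitimate here. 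This is a standard fact about composition of the Dirac distribution with a submersion (here a linear isomorphism in the $\mathbf{x}$-variables for each fixed $\mathbf{t}$), and the cited reference \cite{Kan} provides the multidimensional framework; I would invoke it rather than reprove it, and then the identification of $\mathcal{K}_X(\mathbf{t})$ with the stated bounded function is immediate.
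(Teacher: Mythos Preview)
Your proposal is correct and follows the standard route: integrate out $\mathbf{w}$ to recover the product of Dirac deltas $\delta(A\mathbf{x}+B\mathbf{t})$, then use invertibility of $A$ to collapse the $\mathbf{x}$-integral onto the single point $\mathbf{x}=-A^{-1}B\mathbf{t}$ with Jacobian factor $|\det A|^{-1}$, and finally symmetrize the resulting quadratic form. The paper does not give its own proof of this lemma but simply cites \cite{BAGPN}; your argument is precisely the computation one finds there, and your identification of the only delicate point (the legitimacy of the change of variables inside the tempered distribution, handled by the standard submersion formula for $\delta$) is appropriate.
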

\begin{proof}
This follows from \cite[Lemma 2.9]{BAGPN} and the change of variables 
$$(t_1,\dots, t_N) \to (\epsilon(T_1)t_1,\dots, \epsilon(T_N)t_N).$$
\end{proof}

The product of several Dirac delta functions might not be a tempered distribution in general. However the kinematical kernels in this paper will always be, thanks to the assumption that \linebreak $H_2(M_X\setminus X^0,\Z)=0$ (satisfied by  {any} knot complement). See \cite{AK} for more details, via the theory of wave fronts. The key property to notice is the linear independence of the terms $x_0(T_j)- x_1(T_j)+ x_2(T_j), \ x_2(T_j)- x_3(T_j)+ \varepsilon(T_j)t_j$.

\begin{definition}
Let $X$ be a triangulation. Its \textit{dynamical content} associated to $\hbar>0$ is a function $\mathcal{D}_{\hbar,X}\colon \mathcal{A}_X \to  \mathcal{S}\left (\R^{X^{3}}\right )$ defined on each set of angles $\alpha \in \mathcal{A}_X$ by
$$\mathcal{D}_{\hbar,X}(\mathbf{t},\alpha)= \prod_{T\in X^{3}} 
\dfrac{\exp \left( \hbar^{-1/2} \alpha_3(T) \varepsilon(T)\mathsf{t}(T) \right )}
{\Phi_\B\left ( \varepsilon(T) \left(\mathsf{t}(T) - \dfrac{i}{2 \pi \sqrt{\hbar}}(\pi-\alpha_1(T))\right)\right )^{\varepsilon(T)}}.
$$
\end{definition}

Note that $\mathcal{D}_{\hbar,X}(\cdot,\alpha)$ is in $\mathcal{S}\left (\R^{X^{3}}\right )$ thanks to the properties of $\Phi_\B$ and the positivity of the dihedral angles in $\alpha$ (see \cite{AK} for details).

More precisely, each term in the dynamical content has exponential decrease as described in the following lemma, which immediately follows from Proposition \ref{prop:quant:dilog} (4).

\begin{lemma}\label{lem:dec:exp}\cite[Lemma 2.11]{BAGPN}
	Let $\B \in \R_{>0}$ and $a,b,c \in (0,\pi)$ such that $a+b+c=\pi$. Then
	$$
	\left |
	\dfrac{e^{\frac{1}{ \sqrt{\hbar}} c x}}{\Phi_\B\left (x-\frac{i}{ 2 \pi \sqrt{\hbar}}(b+c)\right )}
	\right | \underset{\R \ni x \to \pm \infty}{\sim} \left |
	e^{\frac{1}{ \sqrt{\hbar}} c x} \Phi_\B\left (x+\frac{i}{ 2 \pi \sqrt{\hbar}}(b+c)\right )
	\right | \ \ \left \{
	\begin{matrix}
	\underset{\R \ni x \to -\infty}{\sim} e^{\frac{1}{ \sqrt{\hbar}} c x}. \\
	\ \\
	\underset{\R \ni x \to +\infty}{\sim} e^{-\frac{1}{ \sqrt{\hbar}} b x}.
	\end{matrix} \right .
	$$
\end{lemma}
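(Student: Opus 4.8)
The plan is to deduce this lemma directly from two properties of Faddeev's quantum dilogarithm recorded in Proposition~\ref{prop:quant:dilog}: unitarity (part~(2)) and the behaviour at infinity (part~(4)). Throughout, write $d := \frac{b+c}{2\pi\sqrt{\hbar}}$, so that the arguments appearing in the statement are $x - id$ and $x + id$. Since $a = \pi - b - c > 0$ we have $b + c < \pi$, hence $0 < d < \frac{1}{2\sqrt{\hbar}}$; in particular both $x \pm id$ lie in the horizontal strip $\R + i\left(\frac{-1}{2\sqrt{\hbar}}, \frac{1}{2\sqrt{\hbar}}\right)$ on which parts~(2) and~(4) of Proposition~\ref{prop:quant:dilog} apply. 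This strict inequality $d < \frac{1}{2\sqrt{\hbar}}$ is the only hypothesis that genuinely needs checking, and it follows at once from $a + b + c = \pi$ with $a > 0$; everything else is bookkeeping with moduli.

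For the leftmost equivalence I would use unitarity: $\overline{\Phi_\B(x - id)} = \dfrac{1}{\Phi_\B\!\left(\overline{x - id}\right)} = \dfrac{1}{\Phi_\B(x + id)}$, and taking moduli gives $\dfrac{1}{\left|\Phi_\B(x - id)\right|} = \left|\Phi_\B(x + id)\right|$. Multiplying by $\left|e^{\frac{1}{\sqrt{\hbar}} c x}\right| = e^{\frac{1}{\sqrt{\hbar}} c x}$ (recall $c, x \in \R$) yields the identity $\left|\dfrac{e^{\frac{1}{\sqrt{\hbar}} c x}}{\Phi_\B(x - id)}\right| = \left|e^{\frac{1}{\sqrt{\hbar}} c x}\,\Phi_\B(x + id)\right|$, valid for every $x \in \R$, so the first ``$\sim$'' in the statement is in fact an equality.

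It then remains to estimate $\left|e^{\frac{1}{\sqrt{\hbar}} c x}\,\Phi_\B(x + id)\right| = e^{\frac{1}{\sqrt{\hbar}} c x}\,\left|\Phi_\B(x + id)\right|$ as $x \to \pm\infty$. By Proposition~\ref{prop:quant:dilog}(4) (the version for $|\Phi_\B(x + id)|$), $\left|\Phi_\B(x + id)\right| \sim 1$ as $x \to -\infty$, giving the equivalent $e^{\frac{1}{\sqrt{\hbar}} c x}$; and $\left|\Phi_\B(x + id)\right| \sim e^{-2\pi x d} = e^{-\frac{1}{\sqrt{\hbar}}(b+c)x}$ as $x \to +\infty$, so the product is equivalent to $e^{\frac{1}{\sqrt{\hbar}}(c - b - c)x} = e^{-\frac{1}{\sqrt{\hbar}} b x}$. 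Assembling these gives the claim. The main (in truth, the only) subtlety is the domain verification carried out in the first paragraph — once $b + c < \pi$ is noted, the lemma is an immediate consequence of the two cited properties of $\Phi_\B$.
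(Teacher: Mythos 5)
Your proposal is correct and follows essentially the paper's route: the paper proves this lemma by immediately invoking Proposition~\ref{prop:quant:dilog}(4), and your verification of the domain condition $0 < d < \frac{1}{2\sqrt{\hbar}}$ (via $b+c<\pi$) and the two asymptotic estimates at $x\to\pm\infty$ are exactly the content of that invocation. Your use of Proposition~\ref{prop:quant:dilog}(2) (unitarity) is a small refinement over the paper's one-line proof: it upgrades the first ``$\sim$'' to an exact identity $\bigl|{e^{cx/\sqrt{\hbar}}}/{\Phi_\B(x-id)}\bigr| = \bigl|e^{cx/\sqrt{\hbar}}\,\Phi_\B(x+id)\bigr|$ for all $x\in\R$, whereas the paper presumably just applies part~(4) to both $\Phi_\B(x-id)$ and $\Phi_\B(x+id)$ and notes that the asymptotics coincide.
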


Lemma \ref{lem:dec:exp} illustrates why we need the three angles $a,b,c$ to be in $(0,\pi)$: $b$ and $c$ must be positive in order to have exponential decrease in both directions, and $a$ must be  {positive} as well so that $b+c < \pi$ and $\Phi_\B\left (x \pm \frac{i}{ 2 \pi \sqrt{\hbar}}(b+c)\right )$ is always defined.
Alternatively, using the inversion formula of the quantum dilogarithm function (Lemma \ref{prop:quant:dilog} (1)), the dynamical content can be written in the following form.
\begin{lemma}\label{altDC}
Let $X$ be a triangulation. Up to a multiplicative constant with norm 1, we have
\begin{align*}
\mathcal{D}_{\hbar,X}(\mathbf{t},\alpha)
=&\ \ \prod_{T\in X^{3}} 
\exp \left( \hbar^{-1/2} \alpha_3(T) \epsilon(T) \mathsf{t}(T) - i\pi \left(\frac{\epsilon(T)+1}{2}\right)\left(\mathsf{t}(T) - \dfrac{i}{2 \pi \sqrt{\hbar}} (\pi-\alpha_1(T))\right)^2
\right ) \\
& \ \  \times \prod_{T\in X^{3}}
\Phi_\B\left ( -\mathsf{t}(T) + \dfrac{i}{2 \pi \sqrt{\hbar}} (\pi-\alpha_1(T))\right ).
\end{align*}

\end{lemma}
\begin{proof}
The result follows immediately by using Lemma \ref{prop:quant:dilog} (1) and the fact that $|e^{i\frac{\pi}{12}(\B^2 + \B^{-2})}|=1$ for all $\B\in \RR_{>0}$. 
\end{proof}

Now, for $X$ a triangulation such that $H_2(M_X\setminus X_0,\Z)=0$, $\hbar>0$ and $\alpha \in \mathcal{A}_X$ an angle structure,  the associated \textit{partition function of the Teichm\"uller TQFT} is the complex number:
$$\mathcal{Z}_{\hbar}(X,\alpha)= \int_{\mathbf{t} \in \R^{X^3}}  \mathcal{K}_X(\mathbf{t}) \mathcal{D}_{\hbar,X}(\mathbf{t},\alpha) d\mathbf{t} \ \ \ \in \C. $$

Andersen and Kashaev proved in \cite{AK} that the  {modulus} $\left |\mathcal{Z}_{\hbar}(X,\alpha) \right | \in \R_{>0}$ is invariant under Pachner moves with positive angles, and then studied a generalization of this property to a larger class of moves and triangulations with angles, using analytic continuation in complex-valued $\alpha$ \cite{AKicm}.

One natural question is :
for an hyperbolic structure on $M$ and any two corresponding angle structures $(X,\alpha), (X',\alpha')$, do we have 
$\left |\mathcal{Z}_{\hbar}(X,\alpha) \right |=\left |\mathcal{Z}_{\hbar}(X',\alpha') \right |$?
This question is still open at the time of writing.
The previously mentioned analytic continuation in complex-valued $\alpha$ may be a useful tool for proving a positive answer to this question.

Let $\alpha =(a_1,b_1,c_1,\dots, a_N, b_N, c_N) \in \mathcal{A}_X$. Define
$C(\alpha)=
(\varepsilon(T_1)c_1,\dots, \varepsilon(T_N)c_N)^{\!\top}$ and $\mathscr{W}(\alpha) = 2Q (\boldsymbol{\pi-a}) - C(\alpha),$ where the first term in $\mathscr{W}(\alpha)$ is the product of the matrix $Q \in M_{N}(\QQ)$ and the vector $\boldsymbol{\pi - a} = (\pi-a_1,\dots, \pi-a_N)^{\!\top}$.
Following \cite{BAGPN}, under the assumption that $\det \mathcal{A} \neq 0$, we have the following expression of the partition function. 
\begin{proposition}\label{Tpartiexpress1}
Let 
$$
\tilde{\mathscr{Y}}_{\hbar, \alpha} = \prod_{k=1}^N \left(\RR + i(\pi-a_k)\right) .$$
Assume that $\det \mathcal{A} \neq 0$. We have
\begin{align*}
&\ |\mathcal{Z}_{\hbar}(X,\alpha) | = \frac{1}{|\det \mathcal{A}|}
\Big(\frac{1}{2\pi \sqrt{\hbar}}\Big)^{N}\\
&\  \times \left| 
\int_{\mathscr{Y}_\alpha} 
e^{\frac{1}{2\pi\hbar}\left(i \mathbf{y}^{\!\top} Q \mathbf{y} + \mathbf{y}^{\!\top} \mathscr{W}(\alpha) 
- i\sum_{k=1}^N \left(\frac{\varepsilon(T_k)+1}{4}\right) y_k^2\right)}
\prod_{k=1}^N \Phi_\B\left(\frac{y_k}{2\pi \sqrt{\hbar}}\right) d\mathbf{y} \right|.
\end{align*}
\end{proposition}
\begin{proof}
Denote
$$
\tilde{\mathscr{Y}}_{\hbar, \alpha} = \prod_{k=1}^N \left(\RR + \frac{i}{2\pi\sqrt{\hbar}}(\pi-a_k)\right) .$$
By applying the change of variables $\tilde{\mathbf{y}} = -\mathbf{t} + \frac{i}{2\pi\sqrt{\hbar}}(\boldsymbol{\pi-a})$, we have
\begin{align*}
&\ \left|\mathcal{Z}_{\hbar}(X,\alpha)\right|\\
=&\ \left|\int_{\mathbf{t} \in \R^{X^3}}  \mathcal{K}_X(\mathbf{t}) \mathcal{D}_{\hbar,X}(\mathbf{t},\alpha) d\mathbf{t} \right| \\
=&\ \left|\int_{\mathbf{y} \in \tilde{\mathscr{Y}}_{\hbar, \alpha} }  \mathcal{K}_X\left(-\tilde{\mathbf{y}} + \frac{i}{2\pi\sqrt{\hbar}}(\boldsymbol{\pi-a})\right) \mathcal{D}_{\hbar,X}\left(- \tilde{\mathbf{y}} + \frac{i}{2\pi\sqrt{\hbar}}\boldsymbol{\pi - a}\right) d\mathbf{t} \right| \\
=&\  \left|\frac{1}{\det \mathcal{A}}\int_{\mathbf{y} \in \tilde{\mathscr{Y}}_{\hbar, \alpha} } e^{2i\pi \tilde{\mathbf{y}}^{\!\top} Q \tilde{\mathbf{y}} + \frac{2}{\sqrt{\hbar}} (\boldsymbol{\pi-a}) Q  \tilde{\mathbf{y}} - \frac{i}{2\pi\sqrt{\hbar}} (\boldsymbol{\pi-a}) Q (\boldsymbol{\pi-a}) - \frac{1}{\sqrt{\hbar}} C(\alpha)^{\!\top} \tilde{\mathbf{y}} + \frac{i}{2\pi\sqrt{\hbar}} C(\alpha)^{\!\top} (\boldsymbol{\pi-a})} \right.\\
& \left. \qquad \qquad \qquad \qquad e^{-\sum_{k=1}^N i \pi\left(\frac{\varepsilon(T_k)+1}{2}\right) \tilde y_j^2} \prod_{k=1}^N \Phi_\B(\tilde{y}_k) d\mathbf{\tilde y} \right| \\
=&\  \left|\frac{1}{\det \mathcal{A}}\int_{\mathbf{y} \in \tilde{\mathscr{Y}}_{\hbar, \alpha} } e^{2i\pi \tilde{\mathbf{y}}^{\!\top} Q \tilde{\mathbf{y}} + \frac{1}{\sqrt{\hbar}} \mathscr{W}(\alpha)^{\!\top} \tilde{\mathbf{y}} - \sum_{k=1}^N i \pi\left(\frac{\varepsilon(T_k)+1}{2}\right) \tilde y_j^2}
 \prod_{k=1}^N \Phi_\B(\tilde{y}_k) d\mathbf{\tilde y} \right|,
\end{align*}
where $\mathscr{W}(\alpha)= 2Q^{\!\top}(\boldsymbol{\pi - a}) - C(\alpha)= 2Q(\boldsymbol{\pi - a}) - C(\alpha)$.
Applying the change of variables $\mathbf{y} = 2\pi \sqrt{\hbar}\tilde{\mathbf{y}}$, we have the first result.
\end{proof}

\subsection{$\mathbf{PSL(2;\C)}$ A-polynomial}\label{PSL2CApoly}
In this section, we follow the setup in \cite{P20} to define the $PSL(2;\C)$ A-polynomial. Given a hyperbolic knot $K\subset \SS^3$, let $m$ and $l$ be a meridian and corresponding preferred longitude respectively. Let $R(\pi_1(\SS^3\setminus K)))$ be the $PSL(2;\C)$ representation variety of the fundamental group of $\SS^3\setminus K$. Let $R^U(\pi_1(\SS^3\setminus K)) \subset R(\pi_1(\SS^3\setminus K))$ be the affine algebraic set consisting of $\rho$ with the property that $\rho(m)$ and $\rho(l)$ are upper triangular, i.e. $\rho(m)$ and $\rho(l)$ are of the form
$$
\rho(m) = \pm
\begin{pmatrix}
    e^{\frac{w_m^{loc}}{2}} & * \\
    0 & e^{-\frac{w_m^{loc}}{2}}
\end{pmatrix}, \quad
\rho(l) = \pm
\begin{pmatrix}
    e^{\frac{w_l^{loc}}{2}} & * \\
    0 & e^{-\frac{w_l^{loc}}{2}}
\end{pmatrix}
$$
for some complex numbers $w_m^{loc}$ and $ w_l^{loc}$.
Define a map $\zeta: R^U(\pi_1(\SS^3\setminus K)) \to \C^2$ by 
$$ \zeta(\rho) = \left( \left(\pm e^{\frac{w_m^{loc}}{2}}\right)^2, \left(\pm e^{\frac{w_l^{loc}}{2}}\right)^2 \right) = \left( e^{w_m^{loc}}, e^{w_l^{loc}}\right).$$
We let $(M,L)$ be the coordinates of $\C^2$. For each irreducible component $C$ of the algebraic set $R^U(\pi_1(\SS^3\setminus K))$, if the Zariski closure of $\zeta(C) \subset \C^2$ is one-dimensional, we let $F_C(M,L)$ be the defining polynomial. Otherwise, we let $F_C(M,L) =1$. 
\begin{definition}
    The $PSL(2;\C)$ A-polynomial of $K$ is defined by
    $$ A_K(M,L) = \prod_{C} F_C(M,L),$$
    where the product is over all irreducible components $C$ of $R^U(\pi_1(\SS^3\setminus K))$.
\end{definition}
Let $\rho_0: \pi_1(\SS^3\setminus K) \to PSL(2;C)$ be the discrete faithful representation associated to the complete hyperbolic structure of $\SS^3 \setminus K$. Up to conjugation we assume that $\rho_0 \in R^U(\pi_1(\SS^3\setminus K))$. Since $\rho_0(m)$ and $\rho_0(l)$ are parabolic elements, we have
$$
\rho_0(m) = 
\pm \begin{pmatrix}
    e^{\frac{0}{2}} & * \\
    0 & e^{-\frac{0}{2}}
\end{pmatrix}
= \pm
\begin{pmatrix}
    1 & * \\
    0 & 1
\end{pmatrix}, \quad
\rho_0(l) = 
\pm \begin{pmatrix}
    e^{\frac{0}{2}} & * \\
    0 & e^{-\frac{0}{2}}
\end{pmatrix}
= \pm
\begin{pmatrix}
    1 & * \\
    0 & 1
\end{pmatrix}.
$$
Let $C_0$ be the irreducible component containing $\rho_0$.
\begin{definition}
The geometric component of the $PSL(2;\C)$ A-polynomial of $K$ is defined by
$$ A_{K}^0(M,L) = F_{C_0}(M,L).$$
\end{definition}
Note that generically, $\rho(m)$ and $\rho(l)$ are loxodromy elements of the form
$$
\rho(m) = \pm
\begin{pmatrix}
    e^{\frac{w_m^{loc}}{2}} & 0 \\
    0 & e^{-\frac{w_m^{loc}}{2}}
\end{pmatrix}, \quad
\rho(l) = \pm
\begin{pmatrix}
    e^{\frac{w_l^{loc}}{2}} & 0 \\
    0 & e^{-\frac{w_l^{loc}}{2}}
\end{pmatrix}.
$$
In particular, we have
$$
\begin{pmatrix}
    0 & 1 \\
    1 & 0
\end{pmatrix}
\rho(m)
\begin{pmatrix}
    0 & 1 \\
    1 & 0
\end{pmatrix}= \pm
\begin{pmatrix}
    e^{-\frac{w_m^{loc}}{2}} & 0 \\
    0 & e^{\frac{w_m^{loc}}{2}}
\end{pmatrix}, \quad
\begin{pmatrix}
    0 & 1 \\
    1 & 0
\end{pmatrix}
\rho(l)
\begin{pmatrix}
    0 & 1 \\
    1 & 0
\end{pmatrix}
= \pm
\begin{pmatrix}
    e^{-\frac{w_l^{loc}}{2}} & 0 \\
    0 & e^{\frac{w_l^{loc}}{2}}
\end{pmatrix}.
$$
As a result, these two conjugated representations correspond to two distinct points in $\CC^2$. 

We conclude the discussion by giving several remarks. First, for any $\rho$ sufficiently near $\rho_0$, we define the complex logarithmic holonomies of $m$ and $l$ to be $w_m^{loc}$ and $w_l^{loc}$ in such a way that $w_m^{loc} = w_l^{loc} = 0$ at $\rho_0$. Besides, it is known that for hyperbolic knots, $A_K^0(M,L) \neq 1$ and $(1,1)$ is a smooth point. Especially, near the complete hyperbolic structure, $w_m^{loc}$ implicitly determines $w_l^{loc}$ through the defining equation $A_K^0(M,L)=0$. Finally, the A-polynomial can be defined similarly with respect to any pair of simple closed curves on $\partial (\SS^3\setminus\nu(K))$ that generate $\pi_1(\partial (\SS^3\setminus\nu(K)))$.

\subsection{Neumann-Zagier potential function}\label{NZpotentintro}
Let 
$\mathcal{N}$ 
be a 3-manifold and let $K\subset \mathcal{N}$ be a hyperbolic knot in $\mathcal{N}$, i.e. the complement $M=\mathcal{N}\setminus K$ admits a complete hyperbolic structure with finite volume. Recall from \cite{NZ} that for any simple closed curve $\gamma \in \pi_1(\partial (\mathcal{N}\setminus \nu(K)))$, locally near the complete hyperbolic structure, the deformation space of hyperbolic structure of $M$ can be parametrized in a generically 2:1 way with one complex variable $w^{loc}_\gamma$ 
such that, for any complex number $\xi \in \C$ close enough to $0$, 
the point $w^{loc}_{\gamma}=\xi$ corresponds to the hyperbolic structure where the complex logarithmic holonomy of $\gamma$ is equal to $\xi$ (if $X$ is an ideal triangulation of $\mathcal{N} \setminus K$, this means the structure such that
$\mathrm{H}^\C_{X,\gamma}(\mathbf{z})=\xi$).

Furthermore, given a pair of simple closed curves $(\gamma_1, \gamma_2)$ that generates $\pi_1(\partial (\mathcal{N}\setminus \nu(K)))$, the transition map 
$\Psi^{loc}_{\gamma_1,\gamma_2}$
from $w^{loc}_{\gamma_1}$ to 
$w^{loc}_{\gamma_2}
=\Psi^{loc}_{\gamma_1,\gamma_2}
(w^{loc}_{\gamma_1})$
is a locally biholomorphic map around 0 that sends 0 to 0. 
For example, when $\mathbf{z}$ is a shape structure very close to the complete one, we have $\Psi^{loc}_{\gamma_1,\gamma_2}(\mathrm{H}^\C_{X,\gamma_1}(\mathbf{z}))=\mathrm{H}^\C_{X,\gamma_2}(\mathbf{z})$.

In particular, one can consider the holomorphic function $\phi_{\gamma_1,\gamma_2}$ defined locally on a simply connected neighborhood around 0 by
$$
\phi_{\gamma_1,\gamma_2}(w^{loc}_{\gamma_1})
=
i\Big(\Vol(M) + i\CS(M)\Big) + \frac{1}{2}
\int_0^{w^{loc}_{\gamma_1}} 
\Psi^{loc}_{\gamma_1,\gamma_2}
(t)
dt
,
$$
where $w^{loc}_{\gamma_2}=\Psi^{loc}_{\gamma_1,\gamma_2}
(t)$ is regarded as a function in a complex variable $t$, the integral is along any contour from $0$ to $w^{loc}_{\gamma_1}$, and $\Vol(M)$ and $\CS(M)$ are the hyperbolic volume and the Chern-Simons invariant of $M=N\setminus K$ respectively. 
Note that the holomorphic function $\phi_{\gamma_1,\gamma_2}$ satisfies the properties that
\begin{align}\label{NZprop}
\phi_{\gamma_1,\gamma_2}(0) =  i\Big(\Vol(M) + i\CS(M)\Big) \quad \text{and} \quad \frac{d \phi_{\gamma_1,\gamma_2}\big(w^{loc}_{\gamma_1}\big)}{d\big(w^{loc}_{\gamma_1}\big)} = \frac{
\Psi^{loc}_{\gamma_1,\gamma_2}
(w^{loc}_{\gamma_1})
}{2},
\end{align}
which uniquely characterize the function.

Moreover, by \cite[Equation 2.4]{WY2}, in the special case where $w^{loc}_{\gamma_1} = i\theta$ for some $\theta\in \RR$, we have
\begin{align}\label{NZandVol}
\mathrm{Im} \phi_{\gamma_1,\gamma_2} (i\theta) = \Vol \Big(M;\gamma_1,\theta\Big), 
\end{align}
where $\Vol(M;\gamma_1,\theta)$ is the hyperbolic volume of $M$ with (possibly incomplete) hyperbolic structure satisfying $\mathrm{H}^\C_{X,\gamma_1}(\mathbf{z}) = i\theta$.

\subsection{Neumann-Zagier datum and gluing equations}\label{NZD}
Let $L$ be a hyperbolic link in a closed oriented 3-manifold $\mathcal{N}$ and let $\partial (\mathcal{N}\setminus\nu(L)) = \mathbb{T}_1 \coprod \dots \coprod \mathbb{T}_k$. On each $\mathbb{T}_i$, we choose a simple closed curve $\gamma_i \in \pi_1(\mathbb{T}_i)$ and let 
$\boldsymbol{\gamma} = (\gamma_1,\dots,\gamma_k) \in \pi_1(\partial (\mathcal{N}\setminus \nu(L)))$ be the system of simple closed curves. Let $X = \{T_1,\dots,T_N\}$ be an ideal triangulation of $M$ and let $X^1 = \{E_1,\dots, E_N\}$ be the set of edges. For each $T_i$, we choose a quad type (i.e. a pair of opposite edges) and assign a shape parameter $z_i\in \CC\setminus \{0,1\}$ to the edges. For $z_i \in \CC\setminus \{0,1\}$, we define $z_i ' = \frac{1}{1-z_i}$, $z_i'' = 1-\frac{1}{z_i}$. Recall that for each ideal tetrahedron, opposite edges share the same shape parameters (See Figure \ref{edgehol}, left). By \cite{NZ}, there exists $N-k$ linearly independent edge equations, in the sense that if these $N-k$ edge equations are satisfied, the remaining $k$ edge equations will automatically be satisfied. Without loss of generality we assume that $\{E_1,\dots,E_{N-k}\}$ is a set of linearly independent edges. For each edge $E_i$, we let $E_{i,j}$ be the numbers of edges with shape parameter $z_j$ that is incident to $E_i$. We define $E_{i,j}'$ and $E_{i,j}''$ be respectively the corresponding counting with respect to $z_j'$ and $z_j''$ (see Figure \ref{edgehol}, middle). The gluing variety $\mathcal{V}_{X}$ is the affine variety in $(z_1,z_1',z_1'', \dots, z_N, z_N', z_N'') \in \CC^{3N}$ defined by the zero sets of the polynomials
\begin{align*}
p_i = z_i(1-z_i'')- 1,\quad p_i' = z_i'(1-z_i) -1,\quad p_i'' = z_i''(1-z_i') -1 
\end{align*}
for $i=1,\dots, N$ and the polynomials
\begin{align*}
\prod_{j=1}^N z_j^{E_{ij}} (z_j')^{E_{ij}'} (z_j'')^{E_{ij}''}  -1
\end{align*}
for $i=1,\dots, N-k$. By using the equations $p_i=p_i'=p_i'' =0$ for $i=1,\dots,N$, for simplicity we will use $(z_1,\dots, z_N) \in (\CC\setminus \{0,1\})^N$ to represent a point in $\mathcal{V}_{X}$. Besides, for each $\gamma_i \in \pi_1(\mathbb{T}_i)$, we let $C_{i,j}$ be the numbers of edges with shape $z_j$ on the left hand side of $\gamma_i$ minus the numbers of edges with shape $z_j$ on the right hand side of $\gamma_i$. We define $C_{i,j}'$ and $C_{i,j}''$ be respectively the corresponding counting with respect to $z_j'$ and $z_j''$ (see Figure \ref{edgehol}, right). Given $(z_1,\dots, z_n) \in \mathcal{V}_{X}$, the (logarithmic) holonomy of $\gamma_i$ is given by
$$
\mathrm{H}^\C_{X,\gamma_i}(\mathbf{z}) = \sum_{j=1}^N \left( C_{ij} \Log(z_i) + C_{ij}' \Log(z_i') + C_{ij}'' \Log (z_i'') \right)
$$
for $i=1,\dots, k$. There is a well-defined map
$$ \mathcal{P}_{X} : \mathcal{V}_{X} \to \mathrm{X}(M)$$
that sends $\mathbf{z}=(z_1,\dots, z_n)  \in \mathcal{V}_{X}$ to the character $[\rho_{\mathbf{z}}]$ of the pseudo-developing map $\rho_{\mathbf{z}}$ with 
$$
\rho_{\mathbf{z}}(\gamma_i) = \pm 
\begin{pmatrix}
e^{\frac{\mathrm{H}^\C_{X,\gamma_i}(\mathbf{z})}{2}} & * \\
0 & e^{-\frac{\mathrm{H}^\C_{X,\gamma_i}(\mathbf{z})}{2}} 
\end{pmatrix}$$
for $i=1,\dots,k$ up to conjugation.

\begin{figure}[h]
  \centering
          \includegraphics[scale=0.13]{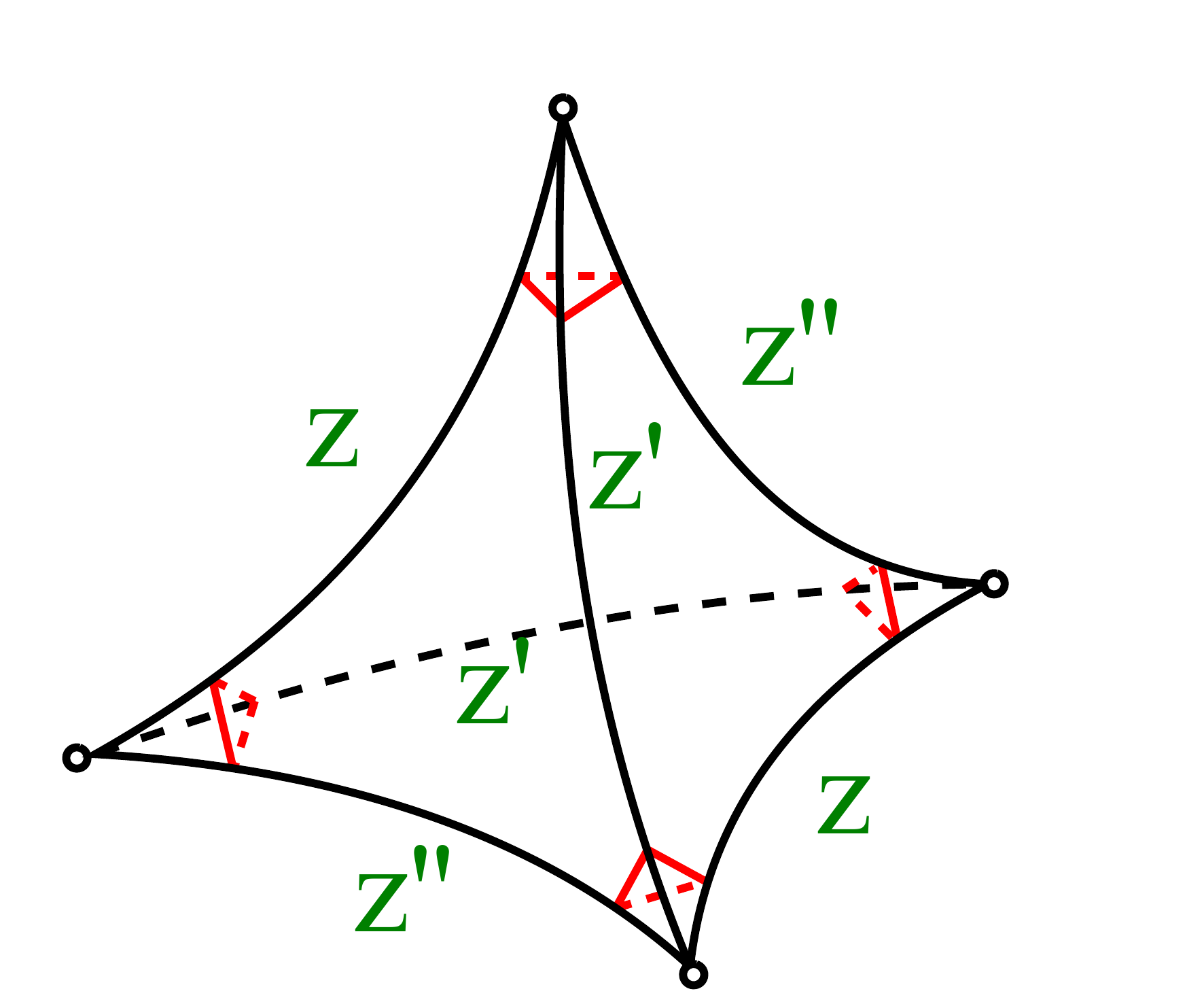} \qquad
\includegraphics[scale=0.13]{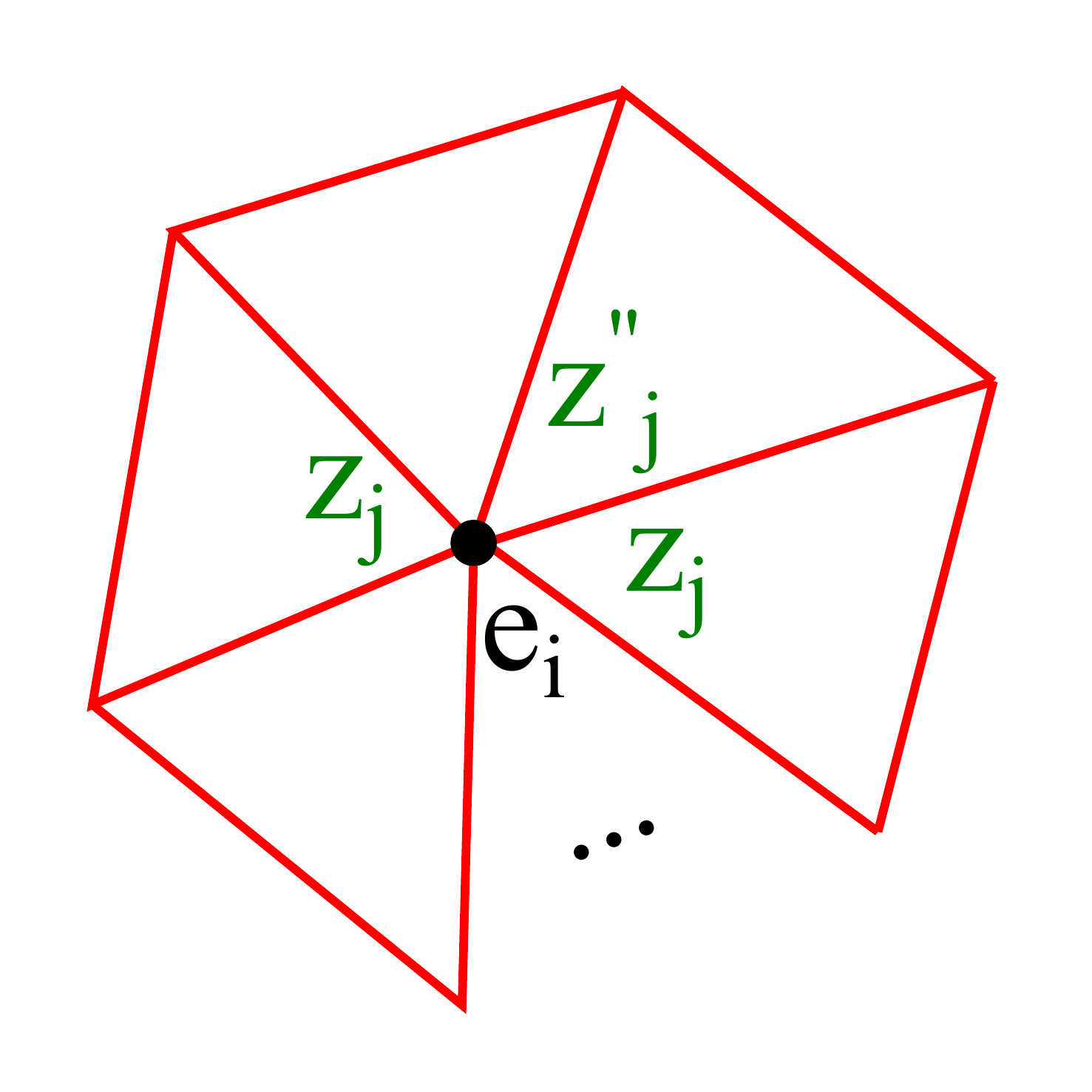}      \qquad\quad  \includegraphics[scale=0.13]{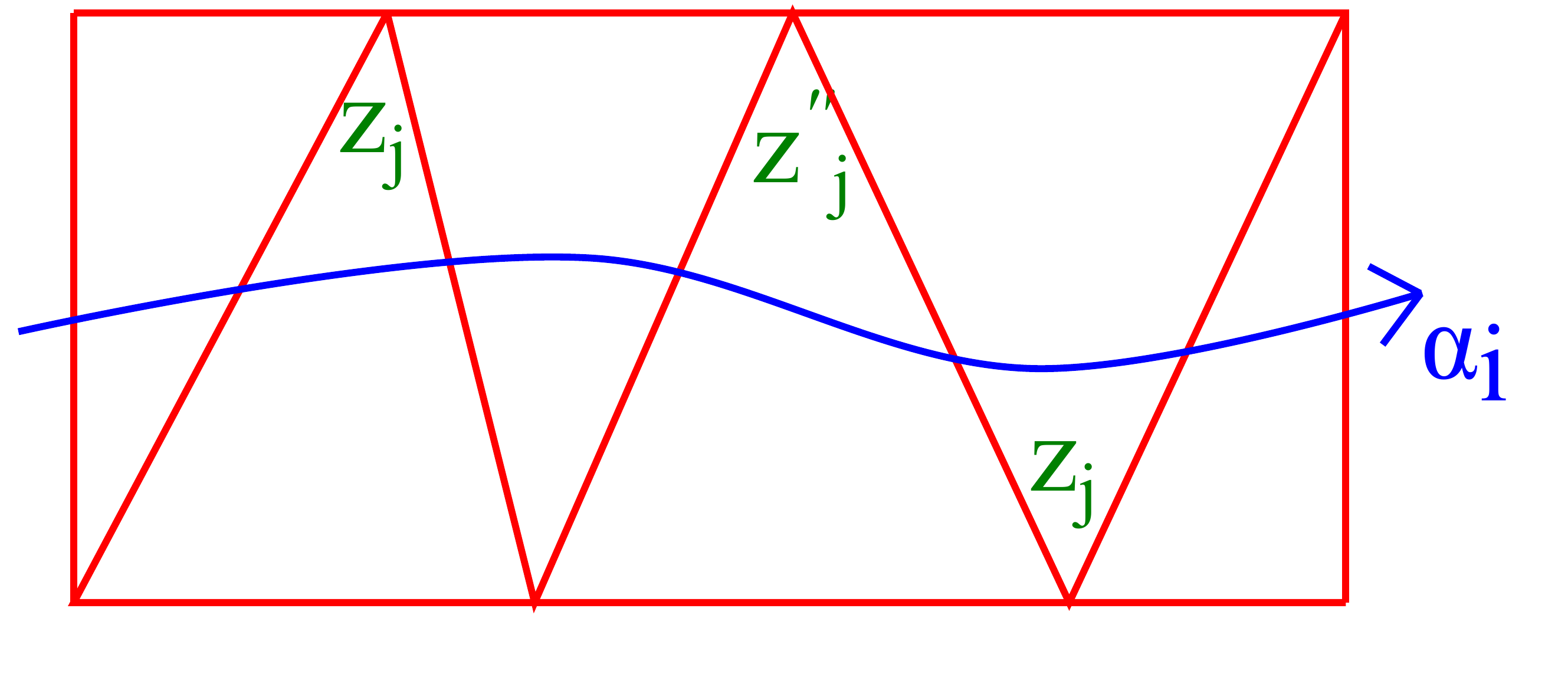}
\caption{On the left hand side, we have an ideal tetrahedron with shape parameters $z_i, z_i'$ and $z_i''$ assigned to the edges. In the middle, the black dot with a label $e_i$ corresponds to the $i$-th edge, which is surrounded by truncated triangles around the ideal vertices of the tetrahedra in the triangulation. In this example, $E_{ij} = 2$. On the right hand side, the rectangle is a fundamental domain of the boundary torus $\mathbb{T}_i$ and $\alpha_i$ is a lifting of the simple closed curve $\alpha_i \in \pi_1(\mathbb{T}_i)$ to the fundamental domain. In this example, we have $C_{ij} = 1 - 1 = 0$.}\label{edgehol}
\end{figure}
        
We define three $N \times N$ matrices $\mathbf{G},\mathbf{G'},\mathbf{G''} \in M_{N\times N}(\ZZ)$ by
\begin{align*}
\mathbf{G}=
\begin{pmatrix}
E_{1,1} & E_{1,2} & \dots & E_{1,N} \\
\vdots & &  & \vdots \\
E_{N-k,1} & E_{N-k,2} & \dots & E_{N-k,N} \\
C_{1,1} & C_{1,2} & \dots & C_{1,N} \\
\vdots & &  & \vdots   \\
C_{k,1} & C_{k,2} & \dots & C_{k,N}   
\end{pmatrix},\qquad
\mathbf{G'}=
\begin{pmatrix}
E_{1,1}' & E_{1,2}' & \dots & E_{1,N}' \\
\vdots & &  & \vdots \\
E_{N-k,1}' & E_{N-k,2}' & \dots & E_{N-k,N}' \\
C_{1,1}' & C_{1,2}' & \dots & C_{1,N}' \\
\vdots & &  & \vdots   \\
C_{k,1}' & C_{k,2}' & \dots & C_{k,N}'   
\end{pmatrix}
\end{align*}
and
\begin{align*}
\mathbf{G''}=
\begin{pmatrix}
E_{1,1}'' & E_{1,2}'' & \dots & E_{1,N}'' \\
\vdots & &  & \vdots \\
E_{N-k,1}'' & E_{N-k,2}'' & \dots & E_{N-k,N}'' \\
C_{1,1}'' & C_{1,2}'' & \dots & C_{1,N}'' \\
\vdots & &  & \vdots   \\
C_{k,1}'' & C_{k,2}'' & \dots & C_{k,N}''  
\end{pmatrix}.
\end{align*}
Given $\xi_1, \ldots, \xi_k \in \CC$, the edges equations and holonomy equations can be written in the form
$$
\mathbf{G} \BLog \mathbf{z} + \mathbf{G'} \BLog \mathbf{z'} + \mathbf{G''} \BLog \mathbf{z''}
= (
    2\pi i,
    \dots,
    2\pi i,
   \xi_1, \ldots, \xi_k
)^{\!\top}.
$$
By using the equation $\Log z + \Log z' + \Log z'' = \pi i$, the equation can be rewritten as
$$\mathbf{A} \BLog \mathbf{z} + \mathbf{B} \BLog \mathbf{z''} 
= i \boldsymbol{\nu} + \tilde{\boldsymbol u},$$
where $\mathbf{A} = \mathbf{G-G'}, \mathbf{B} = \mathbf{G''-G'}$, $\boldsymbol{\nu} \in \pi \ZZ^N$ and $\tilde{\boldsymbol u} = (0,\dots,0,\xi_1, \ldots, \xi_k)^{\!\top}$.

\subsection{Combinatorial flattening}
Recall that a system of simple closed curves $\boldsymbol\gamma = (\gamma_1,\dots,\gamma_k)\in \pi_1(\partial (\mathcal{N}\setminus \nu(L)))$ consists of $k$ non-trivial simple closed curves $\gamma_i \in \pi_1(\mathbb{T}_i)$. Given a vector $\mathbf v\in \CC^N$, we denote the transpose of $\mathbf v$ by $\mathbf v^{\!\top}$.

\begin{definition}\label{CF}
For a system of simple closed curves $\boldsymbol\gamma = (\gamma_1,\dots,\gamma_k)\in \pi_1(\partial \left(\mathcal{N}\setminus \nu(L)\right) )$, a combinatorial flattening with respect to $\boldsymbol \gamma$ consists of three vectors 
\begin{align*}
\mathbf{f} = (f_1,\dots,f_N),\quad
\mathbf{f'} = (f_1',\dots,f_N'),\quad
\mathbf{f}'' = (f_1'',\dots,f_N'') \in \mathbb{Z}^N
\end{align*}
such that 
\begin{itemize}
\item for $i=1,\dots, n$, we have $f_i + f_i' + f_i'' = 1$ and
\item the $i$-th entry of the vector 
$$\mathbf{G}\cdot \mathbf{f}^{\!\top} + \mathbf{G'} \cdot {\mathbf{f}'}^{\!\top} + \mathbf{G''} \cdot {\mathbf{f}''}^{\!\top} $$ is equal to $2$ for $i=1,\dots, N-k$ and is equal to $0$ for $i=N-k+1,\dots, N$.
\end{itemize}
\end{definition}

We have the following stronger version of combinatorial flattening, which requires the last condition in Definition \ref{CF} to be satisfied for all simple closed curves.
\begin{definition}\label{SCF}
A strong combinatorial flattening consists of three vectors 
\begin{align*}
\mathbf{f} = (f_1,\dots,f_N),\quad
\mathbf{f'} = (f_1',\dots,f_N'),\quad
\mathbf{f}'' = (f_1'',\dots,f_N'') \in \mathbb{Z}^N
\end{align*}such that 
\begin{itemize}
\item for $i=1,\dots, n$, we have $f_i + f_i' + f_i'' = 1$ and
\item for {\textbf {\textit any}} system of simple closed curves $\boldsymbol \gamma = (\gamma_1,\dots,\gamma_k)$, the $i$-th entry of the vector 
$$\mathbf{G}\cdot \mathbf{f}^{\!\top} + \mathbf{G'} \cdot {\mathbf{f}'}^{\!\top} + \mathbf{G''} \cdot {\mathbf{f}''}^{\!\top} $$  is equal to $2$ for $i=1,\dots, n-k$ and is equal to $0$ for $i=n-k+1,\dots, n$.
\end{itemize}
\end{definition}
\begin{remark}
By \cite[Lemma 6.1]{N}, a strong combinatorial flattening exists for any ideal triangulation.
\end{remark}

\subsection{1-loop invariant and torsion as rational functions on the gluing variety}\label{1loopsection}

Let $\mathcal{N},L,\boldsymbol\gamma$ be as in the previous section.
\begin{definition}\label{defnrhoregular}
Let $\rho : \pi_1(\mathcal{N}\setminus L) \to \mathrm{PSL}(2;\CC)$ be a representation. An ideal triangulation $X$ of $\mathcal{N}\setminus L$ is $\rho$-regular if there exists $\mathbf{z} \in \mathcal{V}(X)$ such that $\mathcal{P}_{X}(\mathbf z) = [\rho]$.
\end{definition}
\begin{definition}[\cite{DG}] 
Let $X$ be an ideal triangulation of $\mathcal{N}\setminus L$ and $\mathbf{z}$ a shape structure on $X$.
Then the 1-loop invariant of $(\mathcal{N}\setminus L,\boldsymbol\gamma,X, \mathbf{z})$ is defined by
$$\tau(\mathcal{N}\setminus L, \boldsymbol\gamma, X, \mathbf{z}) = 
\pm \frac{1}{2} \mathrm{det}\Big( \mathbf{A} \Delta_{\mathbf{z}''} + \mathbf{B} \Delta_{\mathbf{z}}^{-1}\Big) \prod_{i=1}^N \Big(z_i^{f_i''} z_i''^{-f_i}\Big)
,$$
where $\mathbf{A}=\mathbf{G} - \mathbf{G'}, \mathbf{B} = \mathbf{G''} - \mathbf{G}$ and $(\mathbf{f},\mathbf{f}',\mathbf{f}'')$ is a strong combinatorial flattening, 
$$\Delta_{\mathbf{z}} 
= \begin{pmatrix}
z_1 & 0 & 0 & \dots & 0 \\
0 & z_2 & 0 & \dots & 0 \\
\vdots & \vdots & \vdots & \vdots & \vdots \\
0 & 0 & 0 & 0 & z_N
\end{pmatrix}
\quad \text{and} \quad
\Delta_{\mathbf{z}''} 
= \begin{pmatrix}
z_1'' & 0 & 0 & \dots & 0 \\
0 & z_2'' & 0 & \dots & 0 \\
\vdots & \vdots & \vdots & \vdots & \vdots \\
0 & 0 & 0 & 0 & z_N''
\end{pmatrix}.$$
\end{definition}

The 1-loop conjecture proposed by Dimofte and Garoufalidis suggests that the 1-loop invariant coincides with the adjoint twisted Reidemeister torsion. We used the following formulation of the conjecture from \cite{PW}.
\begin{conjecture}\label{1loopconjstatement}
Let $M$ be a hyperbolic 3-manifold $M$ with toroidal boundary $\partial M = \mathbb{T}_1 \coprod \dots \coprod \mathbb{T}_k$ and let $\boldsymbol\gamma$ be a system of simple closed curves of $\partial M$. Let $\rho_0$ be the unique discrete faithful representation corresponding to the complete hyperbolic structure of $M$. Let $X$ be a $\rho_0$-regular ideal triangulation of $M$ with $\mathcal{P}_{X}(\mathbf z_0) = [\rho_0]$. Let $\mathcal{V}_0(X)$ be the irreducible component of $\mathcal{V}(X)$ containing $\mathbf{z_0}$. For any $\mathbf{z} \in \mathcal{V}_0(X)$ with ${\mathcal{P}}_{X}(\mathbf{z}) = [\rho_{\mathbf{z}}]$, we have
$$
\tau(M, \boldsymbol\gamma, X, \mathbf z) 
= \pm \mathbb T_{(M,\boldsymbol\gamma)}([\rho_{\mathbf{z}}]).
$$
\end{conjecture}
Recently, Garoufalidis and Yoon proved the following result.
\begin{theorem}[\cite{GY}] \label{1looptwobridge}
The 1-loop conjecture holds for all hyperbolic two-bridge knot complements with the discrete faithful representation associated to the complete hyperbolic structure.
\end{theorem}
In Proposition \ref{Hesstotor}, we will show that the 1-loop invariant naturally shows up in the asymptotic expansion formula of the partition functions of the Teichm\"{u}ller TQFT invariants.

\subsection{FAMED triangulations}\label{sub:FAMED}
In this section we will expand on the definition of FAMED triangulations stated in Section \ref{sub:intro:FAMED}, and fix some notations for the proofs in the following sections.

For the rest of this section, we let $M$ be a one-cusped hyperbolic $3$-manifold with trivial second homology, $X$ be an ordered ideal triangulation of 
$M$ with $N$ tetrahedra and $l$ be a simple closed boundary curve of $M$.

Recall that 
$$Q = \frac{1}{2}\big[ -\mathcal{R}\mathcal{A}^{-1} \mathcal{B} - (\mathcal{R}\mathcal{A}^{-1} \mathcal{B} )^{\!\top} \big] 
,$$
where $\mathcal{R},\mathcal{A},\mathcal{B}$ are the matrices in the definition of $\mathscr{Z}$, $\mathcal{A}$ is assumed to be invertible for $Q$ to be defined, and $\alpha \in \mathcal{A}_X$ is an angle structure.

Let $\mathbf{G},\mathbf{G'},\mathbf{G''},\mathbf{A},\mathbf{B}$ be the Neumann-Zagier matrices associated to the boundary curve $l$. Recall that this means that:
\begin{enumerate}
    \item For any shape structure $\mathbf{z}$ on $X$ (not necessarily related to the fixed $\alpha$), we have
\begin{align*}
\mathbf{G} \BLog \mathbf{z} + \mathbf{G'} \BLog \mathbf{z'} + \mathbf{G''} \BLog \mathbf{z''} = 
\begin{pmatrix}
2i \pi \\
\vdots \\
2i \pi\\
\mathrm{H}^\C_{X,l}(\mathbf{z})
\end{pmatrix},
\end{align*}
$$\mathbf{A}=\mathbf{G} -\mathbf{G'}, \ \ \mathbf{B}=\mathbf{G''}-\mathbf{G'},      $$
\begin{align}\label{thurstoneqs}
\mathbf{A} \BLog \mathbf{z} +  \mathbf{B} \BLog \mathbf{z''} = i \boldsymbol \nu +  
\tilde{\boldsymbol u},
\end{align}
where 
$
\boldsymbol \nu
\in \pi \ZZ^N$ and 
 $\tilde{\boldsymbol u}
= (0,\dots,0,
\mathrm{H}^\C_{X,l}(\mathbf{z}))^{\!\top}$
\item By passing to the imaginary part in (1), for any angle structure $\alpha \in \mathcal{A}_X$, we have
\begin{align*}
\mathbf{G} \begin{pmatrix}
\boldsymbol{a_+}\\
\boldsymbol{a_-}
\end{pmatrix}
+ \mathbf{G'} \begin{pmatrix}
\boldsymbol{c_+}\\
\boldsymbol{b_-}
\end{pmatrix} 
+ \mathbf{G''} 
\begin{pmatrix}
\boldsymbol{b_+}\\
\boldsymbol{c_-}
\end{pmatrix}
= 
\begin{pmatrix}
2 \pi \\
\vdots \\
2 \pi\\
\mathrm{H}^\R_{X,l}(\alpha)
\end{pmatrix},
\end{align*}
$$\mathbf{A}=\mathbf{G} -\mathbf{G'}, \ \ \mathbf{B}=\mathbf{G''}-\mathbf{G'},      $$
\begin{align}\label{thurstoneqs2}
\mathbf{A} \begin{pmatrix}
\boldsymbol{a_+}\\
\boldsymbol{a_-}
\end{pmatrix} +  \mathbf{B} 
\begin{pmatrix}
\boldsymbol{b_+}\\
\boldsymbol{c_-}
\end{pmatrix}
= \boldsymbol \nu +  
{\boldsymbol u},
\end{align}
where 
$
\boldsymbol \nu
\in \pi \ZZ^N$ and 
 $\boldsymbol u
= (0,\dots,0,
\mathrm{H}^\R_{X,l}(\alpha))^{\!\top}$.
\end{enumerate}

When $\mathcal{A}$ is invertible,
we further denote
$$\mathscr{G} := 
-2
Q
+
\frac{\mathcal{E}+{\rm Id}_N}{2}
=
\mathcal{X}_0 \mathcal{A}^{-1} \mathcal{B} + \left ( \mathcal{X}_0 \mathcal{A}^{-1} \mathcal{B} \right )^{\!\top} + \frac{\mathcal{E}+{\rm Id}_N}{2}.
$$
Hence, condition (4) on Definition \ref{def:FAMED} can be restated as
$$\mathscr{G}=\mathbf{B}^{-1}\mathbf{A}.$$

\subsection{Saddle point approximation}
The following version of saddle point approximation is a special case of \cite[Proposition 5.1]{WY2}. 
\begin{proposition}[Proposition 5.1, \cite{WY2}]\label{saddle}
Let $D$ be a region in $\mathbb C^{N}$. Let $f(\mathbf z)$ and $g(\mathbf z)$ be complex valued functions on $D$  which are holomorphic in $\mathbf z$. For each positive $r,$ let $f_r(\mathbf z)$ be a complex valued function on $D$ holomorphic in $\mathbf z$ of the form
$$ f_r(\mathbf z) = f(\mathbf z) + \frac{\upsilon_r(\mathbf z)}{r^2},$$
where $\upsilon_r(\mathbf z)$ is another complex valued function on $D$ holomorphic in $\mathbf z$.
Let $S$ be an embedded $N$-dimensional disk and $\mathbf c$ be a critical point of $f$ on $S$. If for each $r>0$
\begin{enumerate}[(1)]
\item $\mathrm{Re}f(\mathbf c) > \mathrm{Re}f(\mathbf z)$ for all $\mathbf z \in S\setminus \{\mathbf c\},$
\item the domain $\{\mathbf z\in D\ |\ \mathrm{Re} f(\mathbf z) < \mathrm{Re} f(\mathbf c)\}$ deformation retracts to $S\setminus\{\mathbf c\},$
\item $g(\mathbf c)\neq 0$,
\item $|\upsilon_r(\mathbf z)|$ is bounded from above by a constant independent of $r$ on $D,$ and
\item  the Hessian matrix $\mathrm{Hess}(f)$ of $f$ at $\mathbf c$ is non-singular,
\end{enumerate}
then as $r\to \infty$,
\begin{equation*}
\begin{split}
 \int_{S} g(\mathbf z) e^{rf_r(\mathbf z)} d\mathbf z= \Big(\frac{2\pi}{r}\Big)^{\frac{{N}}{2}}\frac{g(\mathbf c)}{\sqrt{(-1)^{{N}}\det\mathrm{Hess}(f)(\mathbf c)}} e^{rf(\mathbf c)} \Big( 1 + O \Big( \frac{1}{r} \Big) \Big).
 \end{split}
 \end{equation*}
\end{proposition}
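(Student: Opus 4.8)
The plan is to run a multidimensional steepest-descent (Laplace) argument, with the analytic hypotheses (3)--(5) producing the Gaussian leading term at $\mathbf c$ and the topological hypotheses (1)--(2) licensing the deformation of $S$ onto a descent contour while confining all error terms to a region where $\mathrm{Re}\,f$ is strictly below $\mathrm{Re}\,f(\mathbf c)$. First I would normalize $f$ near $\mathbf c$: since $f$ is holomorphic and $\mathbf c$ is a non-degenerate critical point (hypothesis (5)), the holomorphic Morse lemma provides a biholomorphism $\phi$ from a neighbourhood $U$ of $\mathbf c$ onto a neighbourhood of $0\in\mathbb C^n$ with $\phi(\mathbf c)=0$ and $f(\mathbf z)-f(\mathbf c)=-\sum_{j=1}^n w_j^2$ for $\mathbf w=\phi(\mathbf z)$. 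Differentiating twice at $\mathbf c$ and using $\nabla f(\mathbf c)=0$ gives $-2I=(D\phi^{-1}(0))^{\!\top}\mathrm{Hess}(f)(\mathbf c)\,D\phi^{-1}(0)$, hence $\big(\det D\phi^{-1}(0)\big)^2=(-2)^n/\det\mathrm{Hess}(f)(\mathbf c)$. In these coordinates the model contour $\Gamma_0:=\phi^{-1}(\mathbb R^n)$ satisfies $\mathrm{Re}\big(f-f(\mathbf c)\big)=-\sum(\mathrm{Re}\,w_j)^2\le 0$ with equality only at $\mathbf c$, so $\Gamma_0$ is a genuine descent contour through $\mathbf c$.

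Next I would replace $S$ by a descent contour up to exponentially small error. Split $S=S_{\mathrm{in}}\cup S_{\mathrm{out}}$ with $S_{\mathrm{in}}=S\cap U$. By hypothesis (1), together with the behaviour of $\mathrm{Re}\,f$ at the ends of $S$ (or compactness of $S$), there is $\delta>0$ with $\mathrm{Re}\,f\le\mathrm{Re}\,f(\mathbf c)-\delta$ on $S_{\mathrm{out}}$; since $g$ is continuous and $|\upsilon_r|$ is uniformly bounded (hypothesis (4)), $\int_{S_{\mathrm{out}}}g\,e^{rf_r}\,d\mathbf z=O\big(e^{r(\mathrm{Re}\,f(\mathbf c)-\delta)}\big)$. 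As $g\,e^{rf_r}\,d\mathbf z$ is a holomorphic, hence closed, $n$-form, Stokes' theorem makes its integral over a relative cycle depend only on the cycle's class. Using hypothesis (2) --- the sublevel set $\{\mathrm{Re}\,f<\mathrm{Re}\,f(\mathbf c)\}$ deformation retracts onto $S\setminus\{\mathbf c\}$ --- one builds an $(n+1)$-chain contained in that sublevel set together with $U$ whose boundary is $S-\Gamma$, where $\Gamma$ agrees with $\Gamma_0$ inside $U$ and lies in the sublevel set outside $U$. Hence $\int_S g\,e^{rf_r}\,d\mathbf z=\int_{\Gamma}g\,e^{rf_r}\,d\mathbf z$, and the contribution of $\Gamma\setminus U$ is again $O\big(e^{r(\mathrm{Re}\,f(\mathbf c)-\delta')}\big)$.

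Then I would evaluate the localized integral. On $\Gamma\cap U$, changing variables by $\mathbf w=\phi(\mathbf z)$ and writing $h(\mathbf w):=g(\phi^{-1}(\mathbf w))\det D\phi^{-1}(\mathbf w)$, the integral becomes $e^{rf(\mathbf c)}\int_{\mathbb R^n\cap\phi(U)}h(\mathbf w)\,e^{-r\sum w_j^2}\,e^{\upsilon_r(\phi^{-1}(\mathbf w))/r}\,d\mathbf w$. By hypothesis (4), $e^{\upsilon_r/r}=1+O(1/r)$ uniformly, and the standard Laplace expansion (Taylor-expand $h$ at $0$, the odd Gaussian moments vanishing, tails controlled by smoothness of $h$ and $\int_{\mathbb R^n}e^{-r\sum w_j^2}\,d\mathbf w=(\pi/r)^{n/2}$) gives $(\pi/r)^{n/2}\big(h(0)+O(1/r)\big)$, with $h(0)=g(\mathbf c)\det D\phi^{-1}(0)\neq 0$ by hypothesis (3). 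Substituting $\det D\phi^{-1}(0)=\pm 2^{n/2}/\sqrt{(-1)^n\det\mathrm{Hess}(f)(\mathbf c)}$ and $(\pi/r)^{n/2}2^{n/2}=(2\pi/r)^{n/2}$ yields the asserted formula
$$\int_{S} g(\mathbf z)\,e^{rf_r(\mathbf z)}\,d\mathbf z=\Big(\frac{2\pi}{r}\Big)^{n/2}\frac{g(\mathbf c)}{\sqrt{(-1)^n\det\mathrm{Hess}(f)(\mathbf c)}}\,e^{rf(\mathbf c)}\,\big(1+O(1/r)\big),$$
the overall sign being absorbed into the square root as usual.

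I expect the main obstacle to be the contour-deformation step: extracting from the purely topological hypothesis (2) an explicit interpolating $(n+1)$-chain lying in $\{\mathrm{Re}\,f<\mathrm{Re}\,f(\mathbf c)\}$, and simultaneously arranging that $S$ can be homotoped to match the Morse-model descent contour $\Gamma_0$ near $\mathbf c$, so that Stokes' theorem applies cleanly and every discarded piece is provably exponentially small \emph{uniformly in $r$}. Once this topological bookkeeping is in place, the Morse normalization and the Gaussian asymptotics are routine.
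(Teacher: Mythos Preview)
The paper does not provide a proof of this proposition: it is stated as a special case of \cite[Proposition~5.1]{WY2} and simply cited without argument. Your sketch is the standard steepest-descent proof via the holomorphic Morse lemma, Stokes-theorem contour replacement onto a descent chain, and Gaussian localization, and it is essentially correct as an outline; this is almost certainly the method used in \cite{WY2} as well, so there is nothing in the present paper to compare it against.
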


\section{Asymptotics of the Teichm\"{u}ller TQFT partition functions}\label{sec:asymp:expan}

In this section, we will prove Theorem \ref{mainthmZ}, and in particular the asymptotic expansion for the partition function of Teichmüller TQFT at a hyperbolic cone structure. For this we will refine and expand on the analytical techniques used in \cite{BAGPN}, and highlight the role of the FAMED condition.

For the rest of this section, we let $M$ be a one-cusped hyperbolic $3$-manifold with trivial second homology, $X$ be an ordered ideal triangulation of 
$M$ with $N$ tetrahedra, $\alpha \in \mathscr{A}_X$ be an angle structure of $X$ and $l$ be a simple closed boundary curve of $M$.

\subsection{Expression of the partition functions for FAMED ideal triangulations}\label{sect:expression}
We follow the notations of Section \ref{sub:FAMED}.
Let $X = X_+ \coprod X_-$, where $X_\pm$ consists of positive/negative tetrahedra respectively.  We emphasize the difference between positive and negative tetrahedra by writing $\mathbf{y} = (\mathbf{y_+}, \mathbf{y_-})^{\!\top}$, $\boldsymbol{a = (a_+,a_-), c= (c_+,c_-)}$, etc. 
Hence, with this numbering of tetrahedra, the matrix of tetrahedra signs is simply written as $\mathcal{E}=\begin{pmatrix}
    \mathbf{1} & \mathbf{0}\\\mathbf{0}&\mathbf{-1}
\end{pmatrix}$.
Recall that 
$$Q = \frac{1}{2}\big[ -\mathcal{R}\mathcal{A}^{-1} \mathcal{B} - (\mathcal{R}\mathcal{A}^{-1} \mathcal{B})^{\!\top} \big] \quad \text{and} \quad \mathscr{W}(\alpha) = 2Q (\boldsymbol{\pi-a}) -C(\alpha),$$
where $\mathcal{R},\mathcal{A},\mathcal{B}$ are the matrices in the definition of $\mathscr{Z}$, $\mathcal{A}$ is assumed to be invertible for $Q$ to be defined, $\alpha \in \mathcal{A}_X$ is the angle structure we fixed at the beginning of the section, and 
$C(\alpha) = (-\mathbf{c_+}, \mathbf{c_-})$.

Let $\mathbf{G},\mathbf{G'},\mathbf{G''},\mathbf{A},\mathbf{B}$ be the Neumann-Zagier matrices associated to the boundary curve $l$. 
When $\mathcal{A}$ is invertible, let
$$\mathscr{G} = 
-2
Q
+
\begin{pmatrix}
\mathbf{1} & \mathbf{0} \\
\mathbf{0} & \mathbf{0}
\end{pmatrix}=
\mathcal{X}_0 \mathcal{A}^{-1} \mathcal{B} + \left ( \mathcal{X}_0 \mathcal{A}^{-1} \mathcal{B} \right )^{\!\top} + \frac{\mathcal{E}+{\rm Id}_N}{2}.$$

The following lemma and proposition will clarify how the FAMED condition implies that the only contribution of $\alpha$ in $|\mathcal{Z}_\hbar(X,\alpha)|$ is $\mathrm{H}^\R_{X,l}(\alpha)$ (like in Conjecture \ref{conj:vol:BAGPN} (1) and (3a)).

\begin{lemma}\label{computeW}
Assume that $X$ is a FAMED triangulation for the curve $l$. Then
we have 
\begin{align*}
\mathscr{W}(\alpha)
=
\mathbf{B}^{-1}(\boldsymbol \nu +\boldsymbol{u}) - 
\mathscr{G}
\boldsymbol{\pi},
\end{align*}
where $\mathbf{B}$ depends on $l$ as before,
$
\boldsymbol \nu
\in \pi \ZZ^N$ and 
 $\boldsymbol u
= (0,\dots,0,
\mathrm{H}^\R_{X,l}(\alpha))^{\!\top}$.
In particular, $\mathscr{W}(\alpha)$ only depends on $\mathrm{H}^\R_{X,l}(\alpha)$ (via $\boldsymbol u$).
\end{lemma}
\begin{proof}
Recall that for positively (resp. negatively) ordered tetrahedron, we have $a = {\Arg(z)}$ and $b = {\Arg(z'')}$ (resp. $a = {\Arg(z)}$ and $c = {\Arg(z'')}$). Besides, by definition of a FAMED triangulation (see Definition \ref{def:FAMED} and Section \ref{sub:FAMED}), we have
$$\mathscr{G} = 
-2
Q
+
\begin{pmatrix}
\mathbf{1} & \mathbf{0} \\
\mathbf{0} & \mathbf{0}
\end{pmatrix}
= \mathbf{B}^{-1} \mathbf{A}.
$$
By a direct computation, for any FAMED ideal triangulation, we have 
\begin{align*}
\mathscr{W}(\alpha)
&= 
-2
Q
\begin{pmatrix}
\boldsymbol{a_+} - \boldsymbol\pi \\
\boldsymbol{a_-} - \boldsymbol\pi
\end{pmatrix}
+
\begin{pmatrix}
\boldsymbol{a_+} + \boldsymbol{b_+} - \pi\\
\boldsymbol{c_-}
\end{pmatrix}
\\
&= 
\left(
\mathscr{G}
-
\begin{pmatrix}
\mathbf{1} & \mathbf{0} \\
\mathbf{0} & \mathbf{0}
\end{pmatrix}
\right)
\begin{pmatrix}
\boldsymbol{a_+} - \boldsymbol\pi \\
\boldsymbol{a_-} - \boldsymbol\pi
\end{pmatrix}
+
\begin{pmatrix}
\boldsymbol{a_+} + \boldsymbol{b_+} - \pi\\
\boldsymbol{c_-}
\end{pmatrix}
\\
&=
\left[
\mathscr{G}
\begin{pmatrix}
\boldsymbol{a_+}\\
\boldsymbol{a_-}
\end{pmatrix}
+
\begin{pmatrix}
\boldsymbol{b_+}\\
\boldsymbol{c_-}
\end{pmatrix}
\right]
-
\mathscr{G}
\begin{pmatrix}
\boldsymbol{\pi}\\
\boldsymbol{\pi}
\end{pmatrix}\\
&=
\mathbf{B}^{-1}
\left[
\mathbf{A}
\begin{pmatrix}
\boldsymbol{a_+}\\
\boldsymbol{a_-}
\end{pmatrix}
+
\mathbf{B}
\begin{pmatrix}
\boldsymbol{b_+}\\
\boldsymbol{c_-}
\end{pmatrix}
\right]
-
\mathscr{G}
\begin{pmatrix}
\boldsymbol{\pi}\\
\boldsymbol{\pi}
\end{pmatrix}\\
&=
\mathbf{B}^{-1}(\boldsymbol \nu +\boldsymbol{u}) - 
\mathscr{G}
\boldsymbol{\pi},
\end{align*}
where the last equality follows from the definition of the angle structure $\alpha$ (see (\ref{thurstoneqs2})).
\end{proof}
 
\begin{proposition}\label{Tpartiexpress2}
Let $X$ be a FAMED ideal triangulation. 
Let $\alpha \in \mathscr{A}_{X}$ be an angle structure given by $\alpha = (a_1,b_1,c_1,\dots,a_N, b_N, c_N)$. Let $\mathscr{Y}_\alpha $ be a multi-contour given by
$$\mathscr{Y}_\alpha = \prod_{k=1}^N \Big( \RR + i(\pi - a_k)\Big). $$
For all $\hbar>0$, we have 
\begin{align}\label{Tpartexpress2formhbar}
&\ |\mathcal{Z}_{\hbar}(X,\alpha) | = \frac{1}{|\det \mathcal{A}|}
\Big(\frac{1}{2\pi \sqrt{\hbar}}\Big)^{N}\times \notag\\
&\   \left| 
\int_{\mathscr{Y}_\alpha} 
e^{\frac{1}{2\pi\hbar}\left(i \mathbf{y}^{\!\top} Q \mathbf{y} + \mathbf{y}^{\!\top} (\mathbf{B}^{-1}(\boldsymbol \nu +\boldsymbol{u}) - 
\mathscr{G}
\boldsymbol{\pi})
- i\sum_{k=1}^N \left(\frac{\varepsilon(T_k)+1}{4}\right) y_k^2\right)}
\prod_{k=1}^N \Phi_\B\left(\frac{y_k}{2\pi \sqrt{\hbar}}\right) d\mathbf{y} \right|.
\end{align}
Moreover, for any $\alpha_1, \alpha_2 \in \mathcal{A}_X$ with $\mathrm{H}^\R_{X,l}(\alpha_1) = \mathrm{H}^\R_{X,l}(\alpha_2)$, we have $|\mathscr{Z}_{\hbar}(X, \alpha_1)|  = |\mathscr{Z}_{\hbar}(X, \alpha_2)| $.
\end{proposition}
\begin{proof}
The expression of $|\mathscr{Z}_{\hbar}(X, \alpha)|$ follows from Proposition \ref{Tpartiexpress1} and Lemma \ref{computeW}. For the second result, note that although the integrand only depends on $\mathrm{H}^{\R}_{X,l}(\alpha)$, a priori the integration multi-contour depends on $\alpha$. We are going to show that we can change the integration multi-contour from $\mathscr{Y}_{\alpha_1}$ to $\mathscr{Y}_{\alpha_2}$ without changing the integral.

Let $\alpha_1, \alpha_2 \in \mathscr{A}_X$ with same angular holonomy $\theta := \mathrm{H}^\R_{X,l}(\alpha_1)=\mathrm{H}^\R_{X,l}(\alpha_2)$ along the longitude. Consider the subspace of angle structure
$\mathscr{A}^l_{X}(\theta):=  \{ \alpha \in \mathscr{A}_X \mid \mathrm{H}^\R_{X,l}(\alpha) = \theta \}$. Since $\mathscr{A}_X$ is convex, $\mathscr{A}^l_{X}(\theta)$, which is a restriction of a convex set on a linear subspace, is also convex. Consider the projection map $\mathrm{proj}_a: \mathscr{A}^l_{X}(\theta) \to \RR^N$ defined by $\mathrm{proj}_a(\alpha) = (a_1,\dots, a_N)$. Since $\mathrm{proj}_a$ is an affine map and $\mathscr{A}^l_{X}(\theta)$ is convex, the image $\mathrm{proj}_a(\mathscr{A}^l_{X}(\theta) )$ is also convex. 

Moreover, since $\mathbf{B}^{-1}$ is invertible, for any $(a_1,\dots, a_N)$ sufficiently close to $\mathrm{proj}_a(\alpha_1)$, by using (\ref{thurstoneqs2}) and the equations $a_k + b_k + c_k = \pi$ for $k=1,\dots, N$, we can find the corresponding $(a_1,b_1,c_1,\dots, a_N, b_N, c_N)$ that depends continuously on $(a_1,\dots,a_N)$ and satisfies $(\ref{thurstoneqs2})$. In particular, by continuity, since $\alpha_1 \in \mathcal{A}_X$, for any $(a_1,\dots, a_N)$ sufficiently close to $\mathrm{proj}_a(\alpha_1)$, we have $(a_1,b_1,c_1,\dots, a_N, b_N, c_N) \in (0,\pi)^N$. Since $(a_1,b_1,c_1,\dots, a_N, b_N, c_N)$ satisfies (\ref{thurstoneqs2}), we have $(a_1,b_1,c_1,\dots, a_N, b_N, c_N)\in  \mathscr{A}^l_{X}(\theta)$. Therefore, for any $(a_1,\dots, a_N)$ sufficiently close to $\mathrm{proj}_a(\alpha_1)$, we can find an angle structure $\alpha \in \mathscr{A}^l_{X}(\theta)$ such that $\mathrm{proj}_a(\alpha) = (a_1,\dots, a_N)$. 

This shows that the image $\mathrm{proj}_a(\mathscr{A}^l_{X}(\theta) )$ has dimension $N$. Altogether, for any $\alpha'$ such that $\mathrm{proj}_a(\alpha')=(a_1', \dots, a_N')$ satisfies $a_k' \in [\min\{a^1_k, a^2_k\}, \max\{a^1_k,a^2_k\}]$, the formula for $|\mathscr{Z}_{\hbar}(X, \alpha')|$ holds. In particular, this implies the exponential decay properties of the integrand at infinity and the absolute convergence of the integral for all such $(a_1', \dots, a_N')$. As a result, by Bochner-Martinelli formula (see \cite{Kr}), we have
$|\mathscr{Z}_{\hbar}(X, \alpha_1)| = |\mathscr{Z}_{\hbar}(X, \alpha_2)|$
for all $\alpha_1, \alpha_2 \in \mathscr{A}_X$ with $\mathrm{H}^\R_{X,l}(\alpha_1)=\mathrm{H}^\R_{X,l}(\alpha_2)=\theta$. This proves the second result. 
\end{proof}

\begin{remark}
Given $\theta\in \R$, for all $\hbar>0$, the integrand in (\ref{Tpartexpress2formhbar}) does not depends on the choices of $\alpha \in  \mathscr{A}^l_{X}(\theta)$. Moreover, the integration multi-contour can be chosen to be any $\mathscr{Y}_\alpha$ as long as $\alpha \in  \mathscr{A}^l_{X}(\theta)$. In this sense, we say that $|\mathscr{Z}_{\hbar}(X, \alpha)|$ depends only on $\mathrm{H}^\R_{X,l}(\alpha)$.
\end{remark}

\subsection{Potential function and its properties}\label{sect:potential:prop}
Consider the potential function 
\begin{align*}
S\left (\mathbf{y}; \mathrm{H}^\R_{X,l}(\alpha)\right ) 
= i \mathbf{y}^{\!\top}
Q \mathbf{y}
 + \mathbf{y}^{\!\top} (\mathbf{B}^{-1}(\boldsymbol \nu + \boldsymbol{u}) - 
\mathscr{G}
\boldsymbol{\pi}) - i\sum_{k=1}^N \left(\frac{\varepsilon(T_k)+1}{4}\right) y_k^2
 - i \sum_{k=1}^N \mathrm{Li}_2\left(-e^{y_k}\right),
\end{align*}
where $\boldsymbol u
= (0,\dots,0,
\mathrm{H}^\R_{X,l}(\alpha))^{\!\top}$.
The function $S$ plays an important role in the asymptotics of the partition function. More generally, we can complexify the parameter $\mathrm{H}^\R_{X,l}(\alpha)$ and consider the potential function $\tilde{S}$ defined by
\begin{align*}
 \tilde{S}\left (\mathbf{y}; \xi\right ) 
= i \mathbf{y}^{\!\top}
Q \mathbf{y}
 + \mathbf{y}^{\!\top} (\mathbf{B}^{-1}(\boldsymbol \nu - i \boldsymbol{\tilde u}) - 
\mathscr{G}
\boldsymbol{\pi}) - i\sum_{k=1}^N \left(\frac{\varepsilon(T_k)+1}{4}\right) y_k^2
 - i \sum_{k=1}^N \mathrm{Li}_2\left(-e^{y_k}\right),
\end{align*}
where $\boldsymbol{\tilde u}
= (0,\dots,0,
\xi)$ with $\xi \in \C$.
The functions $S$ and $\tilde{S}$ are related by 
$$
S\left(\mathbf{y};\mathrm{H}^\R_{X,l}(\alpha)\right)
= \tilde{S}\left (\mathbf{y}; i\mathrm{H}^\R_{X,l}(\alpha) \right ) .
$$
In particular, in this subsection, every results on $\tilde S$ also apply to $S$. The function $\tilde{S}$ will also play a role in Section \ref{sec:Jones} when we study the asymptotics of the Jones function. We relate the critical point equations of $\tilde{S}$ with the gluing equations as follows.

\begin{proposition}\label{critThurscorrespondence}
Assume that $\mathbf{B}$ is invertible and $\mathscr{G} = \mathbf{B}^{-1} \mathbf{A}$ (for example if $X$ is FAMED for $l$). Then via the bijection 
$$y_k = -\Log z_k + i \pi$$ 
defined in Section \ref{sub:thurston}, we have the correspondence between the critical point equation
$$
\nabla \tilde S = 0
$$
and the gluing equations
\begin{align*}
\mathbf{A} \BLog \mathbf{z}
+ \mathbf{B}
\BLog \mathbf{z}''
=
i \boldsymbol \nu + \tilde{\boldsymbol{u}}.
\end{align*}
In particular, for the function $S\left (\mathbf{y}; \mathrm{H}^\R_{X,l}(\alpha)\right ) $,
if one considers the system of (\ref{thurstoneqs}) where $\mathrm{H}^\C_{X,l}(\mathbf{z})$ is replaced with $i \mathrm{H}^\R_{X,l}(\alpha)$, then any solution $\mathbf{y}$ (or $\mathbf{z}$ according to the point of view) to this system gives a critical point $\mathbf{y_c(\alpha)}$ of $\mathbf{y}\mapsto S\left(\mathbf{y},\mathrm{H}^\R_{X,l}(\alpha)\right )$.
 
Moreover, it follows that 
for each angle structure $\alpha' \in \mathcal{A}_{X}^l( \mathrm{H}^\R_{X,l}(\alpha))$, the logarithmic shape structure $\mathbf{y_c(\alpha')}$ associated to $\alpha'$ (as in Section \ref{sub:thurston}) gives a critical point of $(\Re S)\left(\mathbf{y},\mathrm{H}^\R_{X,l}(\alpha)\right )$ relatively to the real parts of the variables $y_j$.
\end{proposition}
\begin{proof}
Note that since $Q$ is symmetric, we have
\begin{align*}
\nabla \tilde{S}(\mathbf{y})
&= 2i Q 
\mathbf{y}
 + 
 (\mathbf{B}^{-1}(\boldsymbol \nu - i\boldsymbol{\tilde u}) - 
\mathscr{G}
\boldsymbol{\pi}) 
- i\left(\frac{\mathcal{E}+\rm{Id}}{2}\right)\mathbf{y}
+ i 
\mathbf{ \BLog(1+e^{y_-})}\\
&= -i\mathcal{G}\mathbf{y} + 
 (\mathbf{B}^{-1}(\boldsymbol \nu - i\boldsymbol{\tilde u}) - 
\mathscr{G}
\boldsymbol{\pi}) + i 
\mathbf{ \BLog(1+e^{y_-})} \\
&= i\mathcal{G}(-\mathbf{y} + i\boldsymbol{\pi}) +  
 \mathbf{B}^{-1}(\boldsymbol \nu - i\boldsymbol{\tilde u})  + i 
\mathbf{ \BLog(1+e^{y_-})}
\end{align*}
Recall that $y_k = -\Log z_k + i \pi$ and $\Log(1+e^{y_k}) = \Log(z_k'').$
Thus, we have
\begin{align}\label{gradScomputation}
\nabla \tilde{S}(\mathbf{y})
&= i\mathcal{G}\BLog\mathbf{z} + i 
\mathbf{ \BLog z''} +  
 \mathbf{B}^{-1}(\boldsymbol \nu - i\boldsymbol{\tilde u}) ,
\end{align}
which is equal to zero if and only if 
\begin{align}\label{eq:gradS}
i\mathcal{G}\BLog\mathbf{z} + i \BLog
\mathbf{ z''} = -  
 \mathbf{B}^{-1}(\boldsymbol \nu - i\boldsymbol{\tilde u}).
\end{align}
By assumption, since $\mathscr{G} = \mathbf{B}^{-1} \mathbf{A}$, the equation above is equivalent to
\begin{align*}
i\left(
\mathbf{A} \BLog \mathbf{z}
+ \mathbf{B}
\BLog \mathbf{z''}
\right)
=
- (\boldsymbol \nu - i\boldsymbol{\tilde u}).
\end{align*}
This gives the desired result.

Finally, by passing to the real part in the previous equivalence \ref{eq:gradS}, we deduce the last sentence of the Proposition.
\end{proof}

We temporarily focus specifically on the property of $S$, or in other words the case where $\xi = i \mathrm{H}^{R}_{X,l}(\alpha)$. The following proposition relates
 the critical values of $S\left (\mathbf{y};\mathrm{H}^\R_{X,l}(\alpha)\right )$ at the critical points defined in Proposition \ref{critThurscorrespondence} to hyperbolic volumes.
 
\begin{proposition}\label{dilogVolgen}
Write $y_l = h_l + i d_l \in \CC$ for $l=1,\dots,N$.
Under the assumption and the correspondence described in Proposition \ref{critThurscorrespondence}, we have 
\begin{align*}
\mathrm{Re}S\left (\mathbf{y};\mathrm{H}^\R_{X,l}(\alpha)\right )
= - \sum_{l}^N D(z_l) + \sum_{l=1}^N h_l \frac{\partial}{\partial h_l} 
\mathrm{Re}S\left (\mathbf{y};\mathrm{H}^\R_{X,l}(\alpha)\right ),
\end{align*}
where $D(z)$ is the Bloch-Wigner dilogarithm function given by
$$ D(z) 
= \mathrm{Im} \mathrm{Li}_2(z) + \log|z| \mathrm{Arg}(1-z).$$
In particular, for every angle structure $\alpha'\in \mathcal{A}_{X}^l( \mathrm{H}^\R_{X,l}(\alpha) )$ and the corresponding $\mathbf{y}_c(\alpha')$ defined in Proposition \ref{critThurscorrespondence}, we have
$$
\mathrm{Re}S\left (\mathbf{y}_c(\alpha');\mathrm{H}^\R_{X,l}(\alpha)\right )
= - \Vol(\alpha'),
$$
where $\Vol(\alpha')$ is the volume of the angle structure $\alpha'$.

In particular, if we let $\mathbf{y}_c(\alpha)$ be the critical point of $S$ described in Proposition \ref{critThurscorrespondence}, then we have
$$
\mathrm{Re}S\left (\mathbf{y}_c(\alpha);\mathrm{H}^\R_{X,l}(\alpha)\right ) =  -\Vol\Big(
M; l, \mathrm{H}^\R_{X,l}(\alpha)
\Big),
$$
 is the volume of $M$ with the (possibly incomplete) hyperbolic cone structure satisfying $\mathrm{H}^\C_{X,l}(\mathbf{z})=i \mathrm{H}^\R_{X,l}(\alpha)$. 
\end{proposition}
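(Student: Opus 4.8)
The plan is to reduce the first identity to a per-tetrahedron computation, exploiting that $S(\cdot;\lambda_X(\alpha))$ is holomorphic in each variable $y_l$ on a neighbourhood of the point $\mathbf{y}$ determined by the correspondence of Proposition \ref{critThurscorrespondence} (the only possible singularity of $\mathrm{Li}_2(-e^{y_l})$ is avoided since $\im(y_l)\in(-\pi,\pi)\setminus\{0\}$ there). By the Cauchy--Riemann equations $\frac{\partial}{\partial u_l}\mathrm{Re}\, S=\mathrm{Re}\big(\frac{\partial S}{\partial y_l}\big)$, where $\frac{\partial}{\partial y_l}$ is the complex derivative; since each $u_l$ is real this gives
\begin{align*}
\mathrm{Re}\, S(\mathbf{y};\lambda_X(\alpha))-\sum_{l=1}^N u_l\,\frac{\partial}{\partial u_l}\mathrm{Re}\, S(\mathbf{y};\lambda_X(\alpha))=\mathrm{Re}\Big(S(\mathbf{y};\lambda_X(\alpha))-\sum_{l=1}^N u_l\,\frac{\partial S}{\partial y_l}(\mathbf{y};\lambda_X(\alpha))\Big),
\end{align*}
so it suffices to show the right-hand side equals $-\sum_l D(z_l)$. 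I would split $S$ into its quadratic part $i\,\mathbf{y}^{\!\top}Q\mathbf{y}$, its linear part, which by Lemma \ref{computeW} is $\mathbf{y}^{\!\top}\mathscr{W}(\alpha)$ with $\mathscr{W}(\alpha)=2Q\Gamma(\alpha)+C(\alpha)$ a real vector, and its dilogarithmic part $i\sum_l\epsilon(T_l)\mathrm{Li}_2(-e^{y_l})$, and treat each separately.

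Writing $\mathbf{y}=\mathbf{u}+i\mathbf{v}$ and using that $Q$ is real and symmetric, the quadratic contribution $i\,\mathbf{y}^{\!\top}Q\mathbf{y}-2i\,\mathbf{u}^{\!\top}Q\mathbf{y}$ collapses to $-i\big(\mathbf{u}^{\!\top}Q\mathbf{u}+\mathbf{v}^{\!\top}Q\mathbf{v}\big)$, which is purely imaginary and so contributes $0$; similarly the linear contribution $\mathbf{y}^{\!\top}\mathscr{W}(\alpha)-\mathbf{u}^{\!\top}\mathscr{W}(\alpha)=i\,\mathbf{v}^{\!\top}\mathscr{W}(\alpha)$ is purely imaginary (as $\mathscr{W}(\alpha)$ is real) and contributes $0$. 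The substantive part is the dilogarithmic term: using $\frac{d}{dy}\mathrm{Li}_2(-e^{y})=-\Log(1+e^{y})$, its contribution is $\mathrm{Re}\big(i\sum_l\epsilon(T_l)\big[\mathrm{Li}_2(-e^{y_l})+u_l\Log(1+e^{y_l})\big]\big)$, and the claim is that the $l$-th summand equals $-D(z_l)$. For a positive tetrahedron the correspondence gives $-e^{y_l}=z_l$, $1+e^{y_l}=1-z_l$ and $u_l=\log|z_l|$, so the summand is $-\im\mathrm{Li}_2(z_l)-\log|z_l|\arg(1-z_l)$, which is $-D(z_l)$ directly from the definition of the Bloch--Wigner function. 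For a negative tetrahedron $-e^{y_l}=z_l^{-1}$, $1+e^{y_l}=z_l''$ and $u_l=-\log|z_l|$; here I would apply the inversion relation for $\mathrm{Li}_2$ (Proposition \ref{prop:dilog}(1)) to replace $\mathrm{Li}_2(z_l^{-1})$ by $-\mathrm{Li}_2(z_l)-\tfrac{\pi^2}{6}-\tfrac12\Log(-z_l)^2$, and then use $z_lz_l'z_l''=-1$ (equivalently $\arg z_l+\arg z_l'+\arg z_l''=\pi$) together with $1-z_l=1/z_l'$ to collapse the argument terms, again obtaining $-D(z_l)$. Summing over $l$ proves the displayed identity. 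I expect this negative-tetrahedron bookkeeping — tracking the branches of $\Log$ and $\arg$ through the inversion relation — to be the main obstacle; everything else is formal.

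The remaining assertions follow by substituting distinguished points into the identity. If $\alpha'$ is an angle structure with $\lambda_X(\alpha')=\lambda_X(\alpha)$, then by Proposition \ref{critThurscorrespondence} the corresponding $\mathbf{y}$ is a critical point of $\mathrm{Re}\, S(\cdot;\lambda_X(\alpha))$, so $\frac{\partial}{\partial u_l}\mathrm{Re}\, S=0$ there for every $l$ and the identity reduces to $\mathrm{Re}\, S=-\sum_l D(z_l)$; since $z_l$ is the complex shape of the ideal tetrahedron with dihedral angles $(a_l',b_l',c_l')$, we have $D(z_l)=\lambda(a_l')+\lambda(b_l')+\lambda(c_l')$, and summing yields $-\Vol(\alpha')$. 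Likewise, by Proposition \ref{critThurscorrespondence} the geometric solution $\mathbf{y_c}$ of \eqref{thurstoneqs} is a critical point of $S(\cdot;\lambda_X(\alpha))$ itself, so $\frac{\partial S}{\partial y_l}(\mathbf{y_c})=0$ and a fortiori $\frac{\partial}{\partial u_l}\mathrm{Re}\, S=0$ there, whence the identity gives $\mathrm{Re}\, S(\mathbf{y_c};\lambda_X(\alpha))=-\sum_l D(z_l)$; the shapes $z_l$ are now the genuine shapes of the (possibly incomplete) hyperbolic structure with $\mathrm{H}(l)=i\lambda_X(\alpha)$, and $\sum_l D(z_l)=\Vol\big(\SS^3\setminus K,\mathrm{H}(l)=i\lambda_X(\alpha)\big)$, which completes the argument.
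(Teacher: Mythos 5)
Your proof is correct and follows essentially the same strategy as the paper's: use Cauchy--Riemann to convert $\partial_{u_l}\mathrm{Re}\,S$ into the real part of the complex derivative, observe that the quadratic and linear parts drop out after the $\sum u_l\partial_{u_l}$ subtraction (the paper phrases this as their real parts being homogeneous of degree one in $\mathbf{u}$, you show the difference is purely imaginary — same thing), and reduce to the per-tetrahedron dilogarithm computation. The one place you diverge is the negative tetrahedron: the paper simply writes $D(-e^{y_l})=D(1/z_l)=-D(z_l)$, invoking the standard functional equation of the Bloch--Wigner function, whereas you rederive this by pushing through the $\mathrm{Li}_2$ inversion relation of Proposition \ref{prop:dilog}(1) and tracking the $\arg$ branches via $\arg z_l+\arg z_l'+\arg z_l''=\pi$ and $1-z_l=1/z_l'$; both are valid, your route is more self-contained but slightly longer.
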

\begin{proof}
Recall that
\begin{align*}
S\left (\mathbf{y}; \mathrm{H}^\R_{X,l}(\alpha)\right ) 
= i \mathbf{y}^{\!\top}
Q \mathbf{y}
 + \mathbf{y}^{\!\top} (\mathbf{B}^{-1}(\boldsymbol \nu + \boldsymbol{u}) - 
\mathscr{G}
\boldsymbol{\pi}) - i\sum_{k=1}^N \left(\frac{\varepsilon(T_k)+1}{4}\right) y_k^2
 - i \sum_{k=1}^N \mathrm{Li}_2\left(-e^{y_k}\right).
\end{align*}
Write $y=h+id$. Note that 
$$
\frac{d}{d y} \Big(i\mathrm{Li}_2(-e^{y}) \Big) =  -i \Log (1+e^{y}).
$$
By using the Cauchy-Riemann equation, we have
$$
\frac{\partial}{\partial h} \mathrm{Re}\Big(i \mathrm{Li}_2(-e^{y})\Big)  = \mathrm{Re} \Bigg( \frac{d}{dy} \Big(i \mathrm{Li}_2(-e^{y})\Big) \Bigg) =  \mathrm{Arg} (1+e^{-y}).
$$
As a result, by the definition of the Bloch-Wigner dilogarithm function,
$$
\mathrm{Re}\Big(i \mathrm{Li}_2(-e^{y})\Big)  
=
-\mathrm{Im} \mathrm{Li}_2(-e^y)
=  - D(-e^y) + h \mathrm{Arg}(1+e^y)
= -D(-e^y) + h\frac{\partial}{\partial h} \mathrm{Re}\Big(i \mathrm{Li}_2(-e^y)\Big).
$$
Thus, we have
$$
\mathrm{Re}\Big(- i \mathrm{Li}_2(-e^{y_l})\Big)  
= D(-e^{y_l}) + h_l\frac{\partial}{\partial h_l} \mathrm{Re}\Big(-i \mathrm{Li}_2(-e^{y_l})\Big)
= - D(z_l) + h_l\frac{\partial}{\partial h_l} \mathrm{Re}\Big(-i \mathrm{Li}_2(-e^{y_l})\Big).
$$ 
Finally, note that the real part of all the remaining terms (the non-dilogarithm terms) are linear in $\{h_l \mid l=1,\dots, N\}$, thus
\begin{align*}
&\ i \mathbf{y}^{\!\top}
Q \mathbf{y}
 + \mathbf{y}^{\!\top} (\mathbf{B}^{-1}(\boldsymbol \nu + \boldsymbol{u}) - 
\mathscr{G}
\boldsymbol{\pi}) - i\pi \sum_{k=1}^N \left(\frac{\varepsilon(T_k)+1}{4}\right) y_k^2 \\
=&\ \sum_{l=1}^N h_l \frac{\partial }{\partial h_l} \mathrm{Re}
\left(  i (\mathbf{y_+}, \mathbf{y_-})
Q  \begin{pmatrix}
\mathbf{y_+} \\
 \mathbf{y_-}
\end{pmatrix}
 + (-\mathbf{y_+}, \mathbf{y_-}) (\mathbf{B}^{-1}(\boldsymbol \nu +\boldsymbol{u}) - 
\mathscr{G}
\boldsymbol{\pi}) 
- i \sum_{k=1}^N \left(\frac{\varepsilon(T_k)+1}{4}\right) y_k^2 \right).
\end{align*}
The result then follows from summing up the equations.
\end{proof}

Let $\mathscr{U}$ be the product of horizontal bands defined by 
$$
\mathscr{U} = \prod_{k=1}^N (\RR + i (0,\pi)).
$$
\begin{proposition}\label{concavSgen}
For any $\alpha \in \mathscr{A}_{X}$, the real part of $S(\mathbf{y}; \mathrm{H}^\R_{X,l}(\alpha))$ 
is strictly concave in the real variables $\Re(\mathbf{y})$ on any real multi-contour of the form
$$\mathcal{Y}_{\alpha^0}=  \prod_{k=1}^N (\RR + i (\pi-a^0_k)),$$
where $\alpha^0\in \mathcal{A}_X$ is an angle structure.
\end{proposition}
\begin{proof}
Note that the hessian of the real part of $S$ on $\mathscr{Y}_\alpha$ is the same as the real part of the holomorphic hessian of $S$. By a direct computation, for any $\mathbf{y} = \mathbf{h} + i \mathbf{d}$ with $ \mathbf{h} = (h_1,\dots,  h_N), \mathbf{d} = (d_1,\dots,d_N) \in \RR^{N}$, we have 
\begin{align*}
\big(\mathrm{Hess}(\Re (S)|_{\mathscr{Y}^0_\alpha}) \big) (\mathbf{h} + i \mathbf{d};\mathrm{H}^\R_{X,l}(\alpha))
= \mathrm{Re} ( \mathrm{Hess}( S ) (\mathbf{h} + i \mathbf{d};\mathrm{H}^\R_{X,l}(\alpha)))=\Delta
\end{align*} 
where $\Delta$ is the $N \times N$ diagonal matrix with entries 
$$ - \mathrm{Im} \bigg( \frac{1}{1+e^{-h_k - i d_k}}\bigg).$$
The conditions that $d_k \in (0,\pi)$ implies that the matrix above is negative definite for all $\mathbf{h} \in \RR^{N}$. 
\end{proof}

Now, we move our attention back to the function $\tilde S$, and
explain how the $1$-loop invariant $\tau$ discussed in Section \ref{1loopsection} appears in the asymptotic expansion of the partition function.

\begin{proposition}\label{Hesstotor}
Assume that $\mathbf{B}$ is invertible and $\mathscr{G} = \mathbf{B}^{-1} \mathbf{A}$. At the critical point $\mathbf{y}=\mathbf{y_c}(\xi)$ of $\tilde S$ described in Proposition \ref{critThurscorrespondence}, we have 
\begin{align*}
\frac{1}{\sqrt{\pm \det \Hess (\tilde S)(\mathbf{y};\xi})}
&= \frac{1}{\sqrt{\pm 2 i^N \det\mathbf{B}^{-1} \left(\prod_{i=1}^N z_i^{-f_i''} z_i''^{f_i - 1} \right)
\tau(M, l, X, \mathbf{z})
}} 
\end{align*}
\end{proposition}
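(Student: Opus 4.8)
The plan is to compute the holomorphic Hessian of the potential $S$ in closed form, evaluate it at the critical point $\mathbf{y_c}$ via the dictionary $y_k = \epsilon(T_k)(\Log z_k - i\pi)$ of Proposition~\ref{critThurscorrespondence}, and then identify the resulting determinant with the Dimofte--Garoufalidis $1$-loop determinant by means of the FAMED relations $\mathscr{G} = \mathbf{B}^{-1}\mathbf{A}$ and $\det\mathbf{B}\neq 0$. Throughout I write $\mathcal{E} = \begin{pmatrix}\mathbf 1 & \mathbf 0\\ \mathbf 0 & \mathbf{-1}\end{pmatrix}$ and $\mathcal{E}_+ = \begin{pmatrix}\mathbf 1 & \mathbf 0\\ \mathbf 0 & \mathbf 0\end{pmatrix}$ for the two diagonal matrices occurring in the definition of $\mathscr{G}$, and $\Delta_{\mathbf{z}'}$ for $\mathrm{diag}(z_1',\dots,z_N')$.

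First I would differentiate once more the formula for $\nabla S$ obtained inside the proof of Proposition~\ref{critThurscorrespondence}. The linear term $(-\mathbf{y_+},\mathbf{y_-})(\mathbf{B}^{-1}(\boldsymbol\nu+\boldsymbol u)-\mathscr{G}\boldsymbol\pi)$ contributes nothing, and one finds $\Hess(S)(\mathbf{y}) = i\,(2Q + D(\mathbf{y}))$, where $D(\mathbf{y})$ is the diagonal matrix whose $(k,k)$-entry is $-\frac{e^{y_k}}{1+e^{y_k}}$ for $T_k\in X_+$ and $+\frac{e^{y_k}}{1+e^{y_k}}$ for $T_k\in X_-$. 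At the critical point $\mathbf{y_c}$ I substitute $z_k = -e^{y_k}$ for positive tetrahedra and $z_k = -e^{-y_k}$ for negative tetrahedra and use $z_k' = \frac{1}{1-z_k}$; a direct computation then gives $D(\mathbf{y_c})_{kk} = z_k'-1$ when $T_k\in X_+$ and $D(\mathbf{y_c})_{kk} = z_k'$ when $T_k\in X_-$, that is, $\mathcal{E}_+ + D(\mathbf{y_c}) = \Delta_{\mathbf{z^c}'}$.

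Next, from $\mathscr{G} = -2\,\mathcal{E}Q\mathcal{E} + \mathcal{E}_+$ together with $\mathcal{E}^2 = \mathbf 1$ and $\mathcal{E}\mathcal{E}_+\mathcal{E} = \mathcal{E}_+$ one gets $2Q = \mathcal{E}_+ - \mathcal{E}\mathscr{G}\mathcal{E}$, hence $2Q + D(\mathbf{y_c}) = \mathcal{E}\big(\Delta_{\mathbf{z^c}'} - \mathscr{G}\big)\mathcal{E}$, so that $\det\Hess(S)(\mathbf{y_c}) = i^N\det(2Q+D(\mathbf{y_c})) = (-i)^N\det(\mathscr{G}-\Delta_{\mathbf{z^c}'})$, using $(\det\mathcal{E})^2 = 1$. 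To introduce the $1$-loop invariant I would then apply $\mathbf{B}^{-1}\mathbf{A} = \mathscr{G}$ and $\det\mathbf{B}\neq 0$ to write $\det(\mathbf{A}\Delta_{\mathbf{z^c}''}+\mathbf{B}\Delta_{\mathbf{z^c}}^{-1}) = \det\mathbf{B}\cdot\det(\mathscr{G}\Delta_{\mathbf{z^c}''} + \Delta_{\mathbf{z^c}}^{-1})$; factoring $\Delta_{\mathbf{z^c}''}$ out of the last determinant and using the identity $z_kz_k'z_k'' = -1$, whence $(z_kz_k'')^{-1} = -z_k'$, gives $\det(\mathbf{A}\Delta_{\mathbf{z^c}''}+\mathbf{B}\Delta_{\mathbf{z^c}}^{-1}) = \det\mathbf{B}\cdot\big(\prod_{i=1}^N z_i''\big)\cdot\det(\mathscr{G}-\Delta_{\mathbf{z^c}'})$. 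Substituting this into the expression for $\det\Hess(S)(\mathbf{y_c})$ and replacing $\det(\mathbf{A}\Delta_{\mathbf{z^c}''}+\mathbf{B}\Delta_{\mathbf{z^c}}^{-1})$ by $\pm 2\,\tau(\SS^3\setminus K,X,\mathbf{z^c},l)\prod_i z_i^{-f_i''}z_i''^{f_i}$ from the definition of the $1$-loop invariant, the combinatorial-flattening monomials recombine into $\prod_i z_i^{-f_i''}z_i''^{f_i-1}$, yielding $\det\Hess(S)(\mathbf{y_c}) = \pm 2\,i^N\det\mathbf{B}^{-1}\big(\prod_i z_i^{-f_i''}z_i''^{f_i-1}\big)\tau(\SS^3\setminus K,X,\mathbf{z^c},l)$; taking reciprocal square roots, which absorbs the overall sign and the ambiguity between $(-i)^N$ and $i^N$ into the symbol $\pm$, gives the claimed identity.

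The step I expect to be the main obstacle is not a single hard argument but the careful bookkeeping of the overall sign, of the power of $i$, and of the flattening monomial through these determinant manipulations; in particular one must verify the clean collapse $\mathcal{E}_+ + D(\mathbf{y_c}) = \Delta_{\mathbf{z^c}'}$, which is exactly where the branch choices in the $(z,z',z'')\leftrightarrow y$ correspondence and the signs $\epsilon(T_k)$ must conspire so that positive and negative tetrahedra contribute uniformly. One should also keep in mind that the matrices $\mathbf{A},\mathbf{B}$ appearing in the $1$-loop formula are the very Neumann--Zagier matrices with respect to $l$ for which the FAMED hypothesis supplies $\mathbf{B}^{-1}\mathbf{A} = \mathscr{G}$ and $\det\mathbf{B}\neq 0$, so no further comparison of conventions is needed beyond this.
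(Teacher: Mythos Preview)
Your proof is correct. The computation of $D(\mathbf{y_c})$, the identity $\mathcal{E}_+ + D(\mathbf{y_c}) = \Delta_{\mathbf{z^c}'}$, the rewriting $2Q = \mathcal{E}_+ - \mathcal{E}\mathscr{G}\mathcal{E}$, and the final determinant manipulations all check out, and the sign/power-of-$i$ bookkeeping is handled appropriately by the $\pm$ in the statement.

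Your route differs from the paper's in its organization rather than its essential content. The paper does not compute $\Hess(S)$ directly in the $y$-variables; instead it recalls from the proof of Proposition~\ref{critThurscorrespondence} that $-i\mathbf{B}\,\mathcal{E}\,\nabla S$ equals the Neumann--Zagier gluing expression $\mathbf{A}\,\BLog\mathbf{z} + \mathbf{B}\,\BLog\mathbf{z''} - i(\boldsymbol\nu+\boldsymbol u)$, then differentiates this identity using the chain rule $\partial/\partial y_k = \pm z_k\,\partial/\partial z_k$, arriving at $\pm\det\Hess(S) = \pm i^N\det\mathbf{B}^{-1}\,(\prod z_k)\,\det(\mathbf{A}\Delta_{z''}+\mathbf{B}\Delta_z^{-1})\,(\prod(1-z_k)^{-1})$ in one step. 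Your approach instead makes the intermediate structure $\Hess(S) = i(2Q+D)$ explicit and invokes the defining relation $\mathscr{G} = -2\mathcal{E}Q\mathcal{E}+\mathcal{E}_+$ of $\mathscr{G}$ to pass from $Q$ to $\mathscr{G}$ before ever introducing $\mathbf{A},\mathbf{B}$. What this buys you is a cleaner picture of why FAMED enters: the Hessian depends on $Q$ only through $\mathscr{G}$, and it is precisely the hypothesis $\mathscr{G}=\mathbf{B}^{-1}\mathbf{A}$ that converts this into the $1$-loop determinant. The paper's route is shorter because it reuses the gradient computation wholesale, but the two arguments are equivalent once one unwinds the chain rule.
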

\begin{proof}
Recall from (\ref{gradScomputation}) that 
\begin{align*}
\nabla \tilde{S}(\mathbf{y})
&= i\mathcal{G}\BLog\mathbf{z} + i 
\mathbf{ \BLog z''} +  
 \mathbf{B}^{-1}(\boldsymbol \nu - i\boldsymbol{\tilde u}) .
\end{align*}
Thus, we have
\begin{align*}
- i \mathbf{B} 
\nabla \tilde S
= \mathbf{A} \BLog \mathbf{z}
+ \mathbf{B} \BLog \mathbf{z''}
- i(\boldsymbol \nu -i\boldsymbol{\tilde u}).
\end{align*}
Furthermore, since 
$$ \frac{\partial}{\partial y_k } =  \frac{\partial z_k}{\partial y_k } \cdot \frac{\partial}{\partial z_k } = \pm z_k \frac{\partial}{\partial z_k } , $$ 
we have
\begin{align*}
&\pm \det \Hess (\tilde S(\mathbf{y}; \xi))\\
=& \pm \det D_\mathbf{y} \nabla \tilde S (\mathbf{y}; \xi) \\
=&\pm i^{N} \det \mathbf{B}^{-1} \left(\prod_{k=1}^N z_k \right)\det D_{\mathbf{z}} \left(\mathbf{A} \BLog \mathbf{z}
+ \mathbf{B} \BLog \mathbf{z''} \right) \\
=&\pm i^{N} \det \mathbf{B}^{-1}  \left(\prod_{k=1}^N z_k \right)\det (\mathbf{A} \Delta_{z''} + \mathbf{B} \Delta_{z}^{-1}) \left(\prod_{k=1}^N \frac{1}{1-z_k} \right)\\
=& \pm 2 i^N \det \mathbf{B}^{-1}   \left(\prod_{k=1}^N z_k \right)\left( \prod_{k=1}^N z_k^{-f_k''} z_k''^{f_k} \right) 
\tau(M,X, l, \mathbf{z})
\left(\prod_{k=1}^N \frac{1}{1-z_k} \right)\\
=& \pm 2 i^N \det \mathbf{B}^{-1} \left(\prod_{k=1}^N z_k^{-f_k''} z_k''^{f_k - 1} \right)
\tau(M,X, l, \mathbf{z}).
\end{align*}
This gives the desired result.
\end{proof}

Consider the function
\begin{align}\label{hdefn}
h( \mathbf{y}) = \exp\left(\sum_{k=1}^N \left(\frac{i}{2\pi}y_k\log(1+e^{y_k}) + \frac{i}{\pi}\Li \left(- e^{y_k}\right) \right) \right) .
\end{align}
\begin{proposition}\label{hvalue}
    At the critical point $\mathbf{y}=\mathbf{y_c}(\xi)$ of $\tilde S$ with the corresponding shape parameters $\mathbf{z} = \mathbf{z}(\xi)$ described in Proposition \ref{critThurscorrespondence}, we have 
    \begin{align*}
    \left|\sqrt{\left(\prod_{k=1}^N z_k^{f_k''} z_k''^{1-f_k} \right)}h( \mathbf{y}) \right|
    = \left|\exp\left(\frac{i}{\pi} R(\mathbf{z}) \right) \right|,
    \end{align*}
    where
    $$
    R(\mathbf{z}) = -\frac{1}{2}(\BLog \mathbf{z} - i\pi \mathbf{f})\cdot (\BLog \mathbf{z''} + i\pi \mathbf{f''}) + \sum_{k=1}^N \Li \left(e^{-\Log z_k}\right) .
    $$
\end{proposition}
\begin{proof}
    Note that 
    \begin{align*}
        \sqrt{\left(\prod_{k=1}^N z_k^{f_k''} z_k''^{1-f_k}\right)}
        =&\ \exp\left( \frac{1}{2} \sum_{k=1}^N \left(f_k'' \Log z_k + (1-f_k) \Log z_k'' \right)\right) \\
        =&\ \exp\left( \frac{1}{2} \sum_{k=1}^N \Log z_k'' + \frac{i}{\pi} \sum_{k=1}^N \frac{1}{2}\left( -\pi if_k'' \Log z_k + \pi i f_k \Log z_k'' \right)\right) 
    \end{align*}
    Recall that $y_k = -\Log z_k + i \pi$ and $\Log(1+e^{y_k}) = \Log(z_k'').$ Thus,
    \begin{align*}
        h(\mathbf{y})
        =&\ \exp\left( \sum_{k=1}^N \left( \frac{i}{2\pi} (-\Log z_k + \pi i) \Log z_k'' + \frac{i}{\pi} \Li \left(e^{-\Log z_k}\right) \right)\right)\\
        =&\ \exp\left( -\sum_{k=1}^N \frac{1}{2} \Log z_k'' + \frac{i}{\pi}\sum_{k=1}^N \left( -\frac{1}{2} \Log z_k \Log z_k'' + \Li \left(e^{-\Log z_k}\right) \right)\right).
    \end{align*}
    Besides, 
    $$\left|\exp\left( \frac{i}{\pi} \left(-\frac{1}{2} (-i\pi \mathbf{f}) \cdot (i\pi \mathbf{f''})\right)  \right) \right| = 1.$$
    Summing up the above equations, we get the desired result.
\end{proof}
\begin{remark}
    The formula of $R$ should be compared with \cite[Equation (5-10) and (5-26)]{DG}. Besides, by \cite[Equation (4-15)]{DG}, at $\mathbf{z} = \mathbf{z}(\xi)$ described in Proposition \ref{critThurscorrespondence}, the expression
    $$\left|\sqrt{\left(\prod_{k=1}^N z_k^{f_k''} z_k''^{1-f_k} \right)}\right| $$
    is actually independent of the choice of flattening.
\end{remark}

\subsection{Asymptotic expansion formula of the Teichm\"{u}ller TQFT partition function}\label{sub:proofs:thm15}

We can now prove the second main theorem, Theorem \ref{mainthmZ}. For the sake of clarity, we recall that with the assumptions and notations used in Theorem \ref{mainthmZ}, 
\begin{itemize}
    \item Theorem \ref{mainthmZ} (1) states that if $X$ is FAMED then
    $$\limsup_{{\hbar}\to 0} 2\pi {\hbar} \log|\mathscr{Z}_{\hbar}(X, \alpha) |
\leq -\sup\{ \Vol(\alpha') \mid \alpha' \in \mathscr{A}_X, 
\mathrm{H}^\R_{X,l}(\alpha')=\mathrm{H}^\R_{X,l}(\alpha)
\},$$
    \item Theorem \ref{mainthmZ} (2) states that if $X$ is FAMED and admits a precise hyperbolic cone structure then
    $$|\mathscr{Z}_{\hbar}(X, \alpha) |
=
\left| \frac{\exp\left( \frac{i}{\pi}R(\mathbf{z})\right)}{\det(\mathcal{A}) \sqrt{ 2 \det\mathbf{B}^{-1}}} \cdot \frac{\exp\Big( -\frac{1}{2\pi {\hbar}}\Vol\left(M; l,\mathrm{H}^\R_{X,l}(\alpha)\right)\Big)}{\sqrt{\pm \tau(M, l, X, \mathbf{z})}}  \Big(1 + O({\hbar})\Big) \right|.$$ 
\end{itemize}

\begin{proof}[Proof of Theorem \ref{mainthmZ}]
First we remark that (3) follows directly from (2). Let us prove (1) and (2).

We assume $X$ is FAMED for the curve $l$.

For any $\alpha = (a_1,b_1,c_1,\dots, a_N,b_N,c_N) \in \mathscr{A}_{X}$, let $\mathscr{Y}_\alpha $ be a multi-contour given by  
$$\mathscr{Y}_\alpha = \prod_{k=1}^N\Big( \RR + i(\pi - a_k)\Big). $$

\underline{Step 1: Re-writing the partition function}

We start by re-writing the partition function with a convenient form, thanks to the FAMED property.

By Proposition \ref{Tpartiexpress2}, since X is FAMED, then for all $\hbar>0$, we have 
\begin{align*}
&\ |\mathcal{Z}_{\hbar}(X,\alpha) | = \frac{1}{|\det \mathcal{A}|}
\Big(\frac{1}{2\pi \sqrt{\hbar}}\Big)^{N}\times \notag\\
&\   \left| 
\int_{\mathscr{Y}_\alpha} 
e^{\frac{1}{2\pi\hbar}\left(i \mathbf{y}^{\!\top} Q \mathbf{y} + \mathbf{y}^{\!\top} (\mathbf{B}^{-1}(\boldsymbol \nu +\boldsymbol{u}) - 
\mathscr{G}
\boldsymbol{\pi})
- i\sum_{k=1}^N \left(\frac{\varepsilon(T_k)+1}{4}\right) y_k^2\right)}
\prod_{k=1}^N \Phi_\B\left(\frac{y_k}{2\pi \sqrt{\hbar}}\right) d\mathbf{y} \right|.
\end{align*}

Recall that 
\begin{align*}
S\left (\mathbf{y}; \mathrm{H}^\R_{X,l}(\alpha)\right ) 
= i \mathbf{y}^{\!\top}
Q \mathbf{y}
 + \mathbf{y}^{\!\top} (\mathbf{B}^{-1}(\boldsymbol \nu + \boldsymbol{u}) - 
\mathscr{G}
\boldsymbol{\pi}) - i\sum_{k=1}^N \left(\frac{\varepsilon(T_k)+1}{4}\right) y_k^2
 - i \sum_{k=1}^N \mathrm{Li}_2\left(-e^{y_k}\right),
\end{align*}
where $\boldsymbol u
= (0,\dots,0,
\mathrm{H}^\R_{X,l}(\alpha))^{\!\top}$.

Thus we have
$$
|\mathscr{Z}_{\hbar}(X, \alpha) |
= 
\left|\frac{1}{\det \mathcal{A}} \Big(\frac{1}{2\pi \sqrt{\hbar}}\Big)^{N} 
\int_{\mathscr{Y}_\alpha} 
e^{\frac{1}{2\pi \hbar} S(\mathbf{y};\mathrm{H}^\R_{X,l}(\alpha))} 
e^{
\sum_{k=1}^N \Log \left( \Phi_\B\left(\frac{y_k}{2\pi \sqrt{\hbar}}\right) \right)
-\frac{-i}{2\pi\hbar} \mathrm{Li}_2\left(-e^{y_k}\right)
}
d\mathbf{y} 
\right|.
$$

Recall that
\begin{align*}
&\left | 
\exp \left (
\sum_{k=1}^N \Log \left( \Phi_\B\left(\frac{y_k}{2\pi \sqrt{\hbar}}\right) \right)
-\frac{-i}{2\pi\hbar} \mathrm{Li}_2\left(-e^{y_k}\right)
\right) \right | \\
&= \exp \left ( \Re \left (
\sum_{k=1}^N \Log \left( \Phi_\B\left(\frac{y_k}{2\pi \sqrt{\hbar}}\right) \right)
-\frac{-i}{2\pi\hbar} \mathrm{Li}_2\left(-e^{y_k}\right)
\right ) \right )\\
&= \exp \left ( \sum_{k=1}^N  \Re \left (
\Log \left( \Phi_\B\left(\frac{y_k}{2\pi \sqrt{\hbar}}\right) \right)
-\frac{-i}{2\pi\hbar} \mathrm{Li}_2\left(-e^{y_k}\right)
\right ) \right )
\end{align*}
and thus, by Proposition \ref{prop:quant:dilog:uniform} (4), we have for all $\B \in (0,\sqrt{\delta})$,
$$
e^{-N(C_\delta +(C/\delta+C')\B^2)}
\leqslant
\left | 
\exp \left (
\sum_{k=1}^N \Log \left( \Phi_\B\left(\frac{y_k}{2\pi \sqrt{\hbar}}\right) \right)
-\frac{-i}{2\pi\hbar} \mathrm{Li}_2\left(-e^{y_k}\right)
\right) \right |
\leqslant
e^{N(C_\delta +(C/\delta+C')\B^2)}
$$
where $\delta= \pi-\max_{k=1, \ldots,N}(|\Im(y_k)|)>0$ and $C,C',C_\delta>0$ as in Proposition \ref{prop:quant:dilog:uniform}.

\underline{Step 2: Proving (1)}

To prove (1), i.e. to prove that
$$\limsup_{{\hbar}\to 0} 2\pi {\hbar} \log|\mathscr{Z}_{\hbar}(X, \alpha) |
\leq -\sup\{ \Vol(\alpha') \mid \alpha' \in \mathscr{A}_X, 
\mathrm{H}^\R_{X,l}(\alpha')=\mathrm{H}^\R_{X,l}(\alpha)
\},$$
it suffices to prove that for every $\alpha' \in \mathcal{A}_X$ such that $\mathrm{H}^\R_{X,l}(\alpha')=\mathrm{H}^\R_{X,l}(\alpha)$, we have:
$$\limsup_{{\hbar}\to 0} 2\pi {\hbar} \log|\mathscr{Z}_{\hbar}(X, \alpha) |
\leq - \Vol(\alpha').$$

Let thus $\alpha' \in \mathcal{A}_X$ such that $\mathrm{H}^\R_{X,l}(\alpha')=\mathrm{H}^\R_{X,l}(\alpha)$. It now suffices to prove that 
$$
|\mathscr{Z}_{\hbar}(X, \alpha)| = \exp\left(\frac{
- \Vol(\alpha')
}{2\pi\hbar}
\right)
 O_{\hbar \to 0^+}(\hbar^{-N/2}).
$$
Let us prove this.

It follows from Proposition \ref{Tpartiexpress2} that $|\mathscr{Z}_{\hbar}(X, \alpha)|=|\mathscr{Z}_{\hbar}(X, \alpha')|$.

Following the discussion at the beginning of the proof applied to $\alpha'$ instead of $\alpha$, we have 
\begin{align}
    |\mathscr{Z}_{\hbar}(X, \alpha') |
&= 
\left|\frac{1}{\det \mathcal{A}} \Big(\frac{1}{2\pi \sqrt{\hbar}}\Big)^{N} 
\int_{\mathscr{Y}_{\alpha'}} 
e^{\frac{1}{2\pi \hbar} S(\mathbf{y};\mathrm{H}^\R_{X,l}(\alpha))} 
e^{
\sum_{k=1}^N \Log \left( \Phi_\B\left(\frac{y_k}{2\pi \sqrt{\hbar}}\right) \right)
-\frac{-i}{2\pi\hbar} \mathrm{Li}_2\left(-e^{y_k}\right)
}
d\mathbf{y} 
\right| \notag\\
&\leqslant
\left|\frac{1}{\det \mathcal{A}} \Big(\frac{1}{2\pi \sqrt{\hbar}}\Big)^{N} \right| \cdot 
\int_{\mathscr{Y}_{\alpha'}} 
\left|
e^{\frac{1}{2\pi \hbar} S(\mathbf{y};\mathrm{H}^\R_{X,l}(\alpha))} \right| \cdot 
\left |
e^{
\sum_{k=1}^N \Log \left( \Phi_\B\left(\frac{y_k}{2\pi \sqrt{\hbar}}\right) \right)
-\frac{-i}{2\pi\hbar} \mathrm{Li}_2\left(-e^{y_k}\right)
} \right |
d\mathbf{y} \notag
\\
&=
\left|\frac{1}{\det \mathcal{A}} \Big(\frac{1}{2\pi \sqrt{\hbar}}\Big)^{N} \right| \cdot 
\int_{\mathscr{Y}_{\alpha'}} 
e^{\frac{1}{2\pi \hbar} \Re S(\mathbf{y};\mathrm{H}^\R_{X,l}(\alpha))} 
e^{
\sum_{k=1}^N 
\Re \left (
\Log \left( \Phi_\B\left(\frac{y_k}{2\pi \sqrt{\hbar}}\right) \right)
-\frac{-i}{2\pi\hbar} \mathrm{Li}_2\left(-e^{y_k}\right)
\right )
} 
d\mathbf{y} \notag
\\
&\leqslant
\left|\frac{1}{\det \mathcal{A}} \Big(\frac{1}{2\pi \sqrt{\hbar}}\Big)^{N} \right| \cdot 
e^{N(C_\delta +(C/\delta+C')\B^2)} \cdot 
\int_{\mathscr{Y}_{\alpha'}} 
e^{\frac{1}{2\pi \hbar} \Re S(\mathbf{y};\mathrm{H}^\R_{X,l}(\alpha))} 
d\mathbf{y},\label{step2.1}
\end{align}
where the first equality uses that $\mathrm{H}^\R_{X,l}(\alpha')=\mathrm{H}^\R_{X,l}(\alpha)$ and the last inequality follows from Proposition \ref{prop:quant:dilog:uniform} (4) as recalled above.

Let $\mathbf{y_\mathbf{c}(\alpha')}$ be the critical point of  $\Re  S$ in Lemma \ref{critThurscorrespondence} that corresponds to the angle structure $\alpha'$. On the contour $\mathscr{Y}_{\alpha'}$, by Propositions \ref{dilogVolgen} and \ref{concavSgen}, the function 
$\mathbf{y} \mapsto \Re S(\mathbf{y};\mathrm{H}^\R_{X,l}(\alpha))$ 
attains its maximum at $\mathbf{y_c(\alpha')}$ with maximum value $-\Vol(\alpha')$. We let $r_0>0$ and $\Gamma_{\alpha'} = \{\mathbf{y} \in \mathscr{Y}_{\alpha'} \mid || \mathbf{y} - \mathbf{y_c(\alpha')} || \leq r_0\}$ be a 
ball of real dimension $N$
inside $\mathscr{Y}_{\alpha'}$ centered at the critical point $\mathbf{y_c(\alpha')}$. We split the integral into two parts: one over the compact part $\Gamma_{\alpha'}$ and another one over 
the remaining part
$\mathscr{Y}_{\alpha'} \setminus \Gamma_{\alpha'}$
of the multi-contour. Similarly to \cite[Lemma 7.10]{BAGPN}, there exists constants $A,B>0$ such that whenever $\hbar < A$,
\begin{align}\label{step2.2}
\int_{\mathscr{Y}_{\alpha'}\setminus \Gamma_{\alpha'}} 
e^{\frac{1}{2\pi \hbar} \Re S(\mathbf{y};\mathrm{H}^\R_{X,l}(\alpha))} 
d\mathbf{y}
< Be^{\frac{1}{2\pi\hbar} M},
\end{align}
where $M=\max_{\mathbf{y} \in \partial \Gamma_{\alpha'}}\{\Re S(\mathbf{y};\mathrm{H}^\R_{X,l}(\alpha))\}< - \Vol(\alpha').$ Besides, on the compact part, 
\begin{align}\label{step2.3}
\int_{\Gamma_{\alpha'}} 
e^{\frac{1}{2\pi \hbar} \Re S(\mathbf{y};\mathrm{H}^\R_{X,l}(\alpha))} 
d\mathbf{y} \ 
 \leqslant  \ B'e^{\frac{1}{2\pi\hbar} (-\Vol(\alpha'))},
\end{align}
where $B'$ is the volume of $\Gamma_{\alpha'}$.
From (\ref{step2.1}), (\ref{step2.2}) and (\ref{step2.3}), 
$$
 |\mathscr{Z}_{\hbar}(X, \alpha') | \leqslant 
 \left|\frac{1}{\det \mathcal{A}} \Big(\frac{1}{2\pi \sqrt{\hbar}}\Big)^{N} \right| \cdot 
e^{N(C_\delta +(C/\delta+C')\B^2)} \cdot (B+B')\cdot
e^{\frac{1}{2\pi \hbar} (-\Vol(\alpha'))}.
$$
We conclude that
$$
|\mathscr{Z}_{\hbar}(X, \alpha) |=
|\mathscr{Z}_{\hbar}(X, \alpha') | =
 e^{\frac{1}{2\pi \hbar} (-\Vol(\alpha'))}
 O_{\hbar \to 0^+}(\hbar^{-N/2}).
$$

As explained previously, this allows us to prove (1).

\underline{Step 3: Reducing the proof of (2) to a compact integral}

Now we further assume that there exists shape parameters $\mathbf{z}$ with positive imaginary parts that satisfy Equation (\ref{equ}) with $\xi = i\mathrm{H}^\R_{X,l}(\alpha)$.
Let $\alpha_{geo}$ be the corresponding angle structure.
Let us prove that
$$|\mathscr{Z}_{\hbar}(X, \alpha) |
=
\left| \frac{\exp\left( \frac{i}{\pi}R(\mathbf{z})\right)}{\det(\mathcal{A}) \sqrt{ 2 \det\mathbf{B}^{-1}}} \cdot \frac{\exp\Big( -\frac{1}{2\pi {\hbar}}\Vol\left(M; l,\mathrm{H}^\R_{X,l}(\alpha)\right)\Big)}{\sqrt{\pm \tau(M, l, X, \mathbf{z})}}  \Big(1 + O({\hbar})\Big) \right|.$$

From Step 1, we have
\begin{align*}
|\mathscr{Z}_{\hbar}(X, \alpha) |
&=|\mathscr{Z}_{\hbar}(X, \alpha_{geo}) |\\
&= 
\left|\frac{1}{\det \mathcal{A}} \Big(\frac{1}{2\pi \sqrt{\hbar}}\Big)^{N} 
\int_{\mathscr{Y}_{\alpha_{geo}}} 
e^{\frac{1}{2\pi \hbar} S(\mathbf{y};\mathrm{H}^\R_{X,l}(\alpha))} 
e^{
\sum_{k=1}^N \Log \left( \Phi_\B\left(\frac{y_k}{2\pi \sqrt{\hbar}}\right) \right)
-\frac{-i}{2\pi\hbar} \mathrm{Li}_2\left(-e^{y_k}\right)
}
d\mathbf{y} 
\right|.
\end{align*}

Let $\mathbf{y_\mathbf{c}(\alpha_{geo})}$ be the critical point of  $\Re  S$ in Lemma \ref{critThurscorrespondence} that corresponds to the angle structure $\alpha_{geo}$. We let $r_0>0$ and $\Gamma_{\alpha_{geo}} = \{\mathbf{y} \in \mathscr{Y}_{\alpha_{geo}} \mid || \mathbf{y} - \mathbf{y_c(\alpha_{geo})} || \leq r_0\}$ be a 
ball of real dimension $N$
inside $\mathscr{Y}_{\alpha_{geo}}$ centered at the critical point $\mathbf{y_c(\alpha_{geo})}$. We split the integral into two parts: one over the compact part $\Gamma_{\alpha_{geo}}$ and another one over 
the remaining part
$\mathscr{Y}_{\alpha_{geo}} \setminus \Gamma_{\alpha_{geo}}$
of the multi-contour.

As in Step 2, and similarly to \cite[Lemma 7.10]{BAGPN}, we obtain the following bound on the non-compact part of the contour: there exists constants $A,B>0$ such that whenever $\hbar < A$,
\begin{align*}
& \left|
\int_{\mathscr{Y}_{\alpha_{geo}}\setminus\Gamma_{\alpha_{geo}}} 
e^{\frac{1}{2\pi \hbar} S(\mathbf{y};\mathrm{H}^\R_{X,l}(\alpha))} 
e^{
\sum_{k=1}^N \Log \left( \Phi_\B\left(\frac{y_k}{2\pi \sqrt{\hbar}}\right) \right)
-\frac{-i}{2\pi\hbar} \mathrm{Li}_2\left(-e^{y_k}\right)
}
d\mathbf{y}\right| \\
&\leqslant
e^{N(C_\delta +(C/\delta+C')\B^2)} \cdot 
\int_{\mathscr{Y}_{\alpha_{geo}}\setminus\Gamma_{\alpha_{geo}}} 
e^{\frac{1}{2\pi \hbar} \Re S(\mathbf{y};\mathrm{H}^\R_{X,l}(\alpha))} 
d\mathbf{y}
\leqslant
e^{N(C_\delta +(C/\delta+C')\B^2)} \cdot B \cdot
e^{\frac{1}{2\pi \hbar} M} ,
\end{align*}
where $M =  \max\{\Re S (\mathbf{y}) \mid  \mathbf{y} \in \partial\Gamma_{\alpha_{geo}}\}$. By Propositions \ref{dilogVolgen} and  \ref{concavSgen}, we have 
$$M < -\Vol(\alpha_{geo}) = -\Vol\Big(M;l, \mathrm{H}^\R_{X,l}(\alpha)\Big).$$
Hence, 
$$\left|\frac{1}{\det \mathcal{A}} \Big(\frac{1}{2\pi \sqrt{\hbar}}\Big)^{N} 
\int_{\mathscr{Y}_{\alpha_{geo}}\setminus\Gamma_{\alpha_{geo}}} 
e^{\frac{1}{2\pi \hbar} S(\mathbf{y};\mathrm{H}^\R_{X,l}(\alpha))} 
e^{
\sum_{k=1}^N \Log \left( \Phi_\B\left(\frac{y_k}{2\pi \sqrt{\hbar}}\right) \right)
-\frac{-i}{2\pi\hbar} \mathrm{Li}_2\left(-e^{y_k}\right)
}
d\mathbf{y} 
\right|$$
is exponentially smaller than the desired asymptotics
$$\left| \frac{\exp\left( \frac{i}{\pi}R(\mathbf{z})\right)}{\det(\mathcal{A}) \sqrt{ 2 \det\mathbf{B}^{-1}}} \cdot \frac{\exp\Big( -\frac{1}{2\pi {\hbar}}\Vol\left(M; l,\mathrm{H}^\R_{X,l}(\alpha)\right)\Big)}{\sqrt{\pm \tau(M, l, X, \mathbf{z})}}  \Big(1 + O({\hbar})\Big) \right|$$
when $\hbar \to 0^+$. This implies that we only need to prove that the compact part
$$\left|\frac{1}{\det \mathcal{A}} \Big(\frac{1}{2\pi \sqrt{\hbar}}\Big)^{N} 
\int_{\Gamma_{\alpha_{geo}}} 
e^{\frac{1}{2\pi \hbar} S(\mathbf{y};\mathrm{H}^\R_{X,l}(\alpha))} 
e^{
\sum_{k=1}^N \Log \left( \Phi_\B\left(\frac{y_k}{2\pi \sqrt{\hbar}}\right) \right)
-\frac{-i}{2\pi\hbar} \mathrm{Li}_2\left(-e^{y_k}\right)
}
d\mathbf{y} 
\right|$$
has the desired asymptotics.

\underline{Step 4: End of the proof of (2)}

Let us denote $$h( \mathbf{y}) = \exp\left(\sum_{k=1}^N \left(\frac{i}{2\pi}y_k\log(1+e^{y_k}) + \frac{i}{\pi}\Li \left(- e^{y_k}\right) \right) \right).
$$
From Proposition \ref{semihbar}, for 
$\mathbf{y}=(y_1,\ldots,y_N)\in\Gamma_{\alpha_{geo}}$, for $k \in\{1,\ldots,N\}$, we have
$$\Phi_\B\left (\frac{y_k}{2 \pi \sqrt{\hbar}}\right ) = 
\exp\left (
- \frac{i}{2 \pi \hbar}  \Li \left(- e^{y_k}\right) +
\frac{i}{2\pi}y_k\log(1+e^{y_k}) + \frac{i}{\pi}\Li \left(- e^{y_k}\right) 
+ O_{\hbar \to 0^+}(\hbar)
\right ),$$
and thus
$$e^{
\sum_{k=1}^N \Log \left( \Phi_\B\left(\frac{y_k}{2\pi \sqrt{\hbar}}\right) \right)
-\frac{-i}{2\pi\hbar} \mathrm{Li}_2\left(-e^{y_k}\right)
}= h( \mathbf{y}) \exp \left ( O_{\hbar \to 0^+}(\hbar)\right ),
$$
where $O_{\hbar \to 0^+}(\hbar)$ is a function of $\mathbf{y}$ holomorphic in a neighborhood $D$ of $\Gamma_{\alpha_{geo}}$ (a simple Taylor remainder in fact), and since $\Gamma_{\alpha_{geo}}$ is compact this function of $\mathbf{y}$ is uniformly a $O_{\hbar \to 0^+}(\hbar)$.

Now we can apply Proposition \ref{saddle}
to the compact integral 
\begin{align*}
& \int_{\Gamma_{\alpha_{geo}}} 
e^{\frac{1}{2\pi \hbar} S(\mathbf{y};\mathrm{H}^\R_{X,l}(\alpha))} 
e^{
\sum_{k=1}^N \Log \left( \Phi_\B\left(\frac{y_k}{2\pi \sqrt{\hbar}}\right) \right)
-\frac{-i}{2\pi\hbar} \mathrm{Li}_2\left(-e^{y_k}\right)
}
d\mathbf{y} \\
&= \int_{\Gamma_{\alpha_{geo}}} 
h(\mathbf{y})
e^{\frac{1}{2\pi \hbar} S(\mathbf{y};\mathrm{H}^\R_{X,l}(\alpha))} 
e^{O(\hbar)
}
d\mathbf{y},
\end{align*}
    where $h(\mathbf{y})$ plays the role of $g(\mathbf{z})$,
$S(\mathbf{y})$ plays the role of $f(\mathbf{z})$, and
$\frac{1}{2\pi \sqrt{\hbar}}\left (S(\mathbf{y})+2\pi \hbar O(\hbar)\right )$ plays the role of $r f_r(\mathbf{z})$. Remark that condition (4) in Proposition \ref{saddle} is satisfied thanks to the above uniformity argument coming from the compactness of the contour piece. We thus obtain
\begin{align*}
&\left |\int_{\Gamma_{\alpha_{geo}}} 
e^{\frac{1}{2\pi \hbar} S(\mathbf{y};\mathrm{H}^\R_{X,l}(\alpha))} 
e^{
\sum_{k=1}^N \Log \left( \Phi_\B\left(\frac{y_k}{2\pi \sqrt{\hbar}}\right) \right)
-\frac{-i}{2\pi\hbar} \mathrm{Li}_2\left(-e^{y_k}\right)
}
d\mathbf{y} \right |\\
&= 
\left|\frac{1}{\det \mathcal{A}} \left(\frac{1}{2\pi \sqrt{\hbar}}\right)^{N}  \big(2\pi \hbar\big)^{\frac{N}{2}} \frac{h(\mathbf{y_c})}{\sqrt{(-1)^{N}\frac{\det(\Hess S)(\mathbf{y_c};\mathrm{H}^\R_{X,l}(\alpha))}{(2\pi)^{N}}}} e^{\frac{1}{2\pi \hbar}S(\mathbf{y_c};\mathrm{H}^\R_{X,l}(\alpha))} \Big(1 + O(\hbar)\Big)\right|,
\end{align*}
where $\mathbf{y_c}$ is the critical point of $S$ described in Proposition \ref{critThurscorrespondence}.

The result then follows from Propositions {\ref{dilogVolgen}}, \ref{Hesstotor} and \ref{hvalue}, as we obtain the desired asymptotics
$$\left| \frac{\exp\left( \frac{i}{\pi}R(\mathbf{z})\right)}{\det(\mathcal{A}) \sqrt{ 2 \det\mathbf{B}^{-1}}} \cdot \frac{\exp\Big( -\frac{1}{2\pi {\hbar}}\Vol\left(M; l,\mathrm{H}^\R_{X,l}(\alpha)\right)\Big)}{\sqrt{\pm \tau(M, l, X, \mathbf{z})}}  \Big(1 + O({\hbar})\Big) \right|.$$
\end{proof}

To conclude this section, let us prove Corollary \ref{coroCassonconj2}.

\begin{proof}[Proof of Corollary \ref{coroCassonconj2}]
We use proof by contradiction. Let us assume that
$X$ is FAMED for $l$, that  Conjecture \ref{conj:expansion:TQFT} is true for $(X,\alpha)$ and that
$$ \sup\{ \Vol(\alpha') \mid \alpha' \in \mathscr{A}_{X}^l (\mathrm{H}^\R_{X,l}(\alpha))\} > \Vol\Big(M;l, \mathrm{H}^\R_{X,l}(\alpha)\Big).$$
By Theorem \ref{mainthmZ}(1), since $X$ is FAMED for $l$, we have
$$
\limsup_{\hbar\to 0} 2\pi \hbar\log|\mathscr{Z}_{\hbar}(X, \alpha) |
< -\Vol\Big(M;l, \mathrm{H}^\R_{X,l}(\alpha)\Big).
$$
However, since $(X,\alpha)$ satisfies Conjecture \ref{conj:expansion:TQFT}, we thus have
$$
\lim_{\hbar\to 0} 2\pi \hbar \log|\mathscr{Z}_{\hbar}(X, \alpha) |
= \limsup_{\hbar\to 0} 2\pi \hbar \log|\mathscr{Z}_{\hbar}(X, \alpha) |
= -\Vol\Big(M;l, \mathrm{H}^\R_{X,l}(\alpha)\Big),
$$
which leads to a contradiction.
\end{proof}

\section{Asymptotics of Jones functions}\label{sec:Jones}
\subsection{Existence of the Jones functions}\label{subsec:existJ}
From Proposition \ref{Tpartiexpress2}, we can see that in the partition function, the information of the angle structure $\alpha\in \mathscr{A}_X$ is contained in the expression
\begin{align*}
\mathbf{y}^{\!\top}
\left(
\mathbf{B}^{-1}\boldsymbol{u}
\right)
= \left( (\mathbf{B}^{-1})^{\!\top}\mathbf{y} \right) \cdot \boldsymbol{u}
= \frac{\tilde{\mathrm{x}} \mathrm{H}^\R_{X,l}(\alpha)}{2},
\end{align*}
where $\tilde{\mathrm{x}}$ is a linear combination of $y$'s given by the last entry of
$$2 (\mathbf{B}^{-1})^{\!\top}\mathbf{y}. $$
Let $m \in \pi_1(\partial M)$ such that the algebraic intersection number of $l$ and $m$ is 1. Assume that the Neumann-Zagier data of $m$ is given by
$$
(c_1, \dots ,c_n) \cdot \BLog\mathbf{z} + (d_1,\dots, d_n) \BLog\mathbf{z}'' = 
\mathrm{H}^\C_{X,m}(\mathbf{z})
+ i \pi \nu_m,
$$
where $c_i, d_i, \nu_m \in \ZZ$.
Recall that by \cite{NZ} (see also \cite[Equation (4-13)]{DG}), we can extend the matrices $\mathbf{A},\mathbf{B}$ into a symplectic matrix
$$
\begin{pmatrix}
\mathbf{A} & \mathbf{B} \\
\mathbf{C} & \mathbf{D}
\end{pmatrix}
$$
such that
\begin{align}
\mathbf{A} \mathbf{D}^{\!\top}  -   \mathbf{B}\mathbf{C}^{\!\top} &= \rm{Id}_N, \label{NZsymp1}\\
\text{the last coefficient of $\mathbf{C} \BLog\mathbf{z} + \mathbf{D} \BLog\mathbf{z}''$} &\text{ is } (\mathrm{H}^\C_{X,m}(\mathbf{z}) + i \pi \nu_m)/2,\label{NZsymp2}
\end{align}
where $\rm{Id}_N$ in (\ref{NZsymp1}) is the $N\times N$ identity matrix.
Furthermore, by \cite[Lemma A.2]{DG}, $\mathscr{G} = \mathbf{B}^{-1}\mathbf{A}$ is symmetric.
The following lemma provides a geometric interpretation of this quantity.
\begin{lemma}\label{xmeaning}
Under the correspondence in Proposition \ref{critThurscorrespondence}, at the critical point of $\tilde S$, we have
$$
\tilde{\mathrm{x}} = \mathrm{H}^\C_{X,m}(\mathbf{z})+ n_1 \mathrm{H}^\C_{X,l}(\mathbf{z}) + i n_2 \pi
$$
for some $n_1,n_2 \in \QQ$. In particular, at the critical point of $S$, we have
$$
\tilde{\mathrm{x}} = \mathrm{H}^\C_{X,m}(\mathbf{z})+ n_1 i\mathrm{H}^\R_{X,l}(\alpha) + i n_2 \pi.
$$
\end{lemma}
\begin{proof}
Under the condition that $\mathbf{B}$ is invertible, by (\ref{NZsymp1}), we have $\mathbf{B}^{-1} = (\mathbf{B}^{-1}\mathbf{A})\mathbf{D}^{\!\top} - \mathbf{C}^{\!\top}$. Besides, recall from \cite[Lemma A.2]{DG} that $\mathscr{G}=\mathbf{B}^{-1}\mathbf{A}$ is symmetric. As a result, at the critical point of $S$,
\begin{align}\label{lcompute}
(\mathbf{B}^{-1})^{\!\top} 
\mathbf{y}
&= (\mathbf{B}^{-1})^{\!\top} (-\BLog (\mathbf{z}) + i \boldsymbol{\pi}) \notag\\
&= (\mathbf{D}(\mathbf{B}^{-1}\mathbf{A}) - \mathbf{C}) 
\cdot (-\BLog(\mathbf{z}) ) + (\mathbf{B}^{-1})^{\!\top} i \boldsymbol{\pi} \notag\\
&= -\left( \mathbf{D}(\mathbf{B}^{-1}\mathbf{A} \BLog(\mathbf{z}) )  - \mathbf{C}\BLog(\mathbf{z})  \right) 
+ (\mathbf{B}^{-1})^{\!\top} i \boldsymbol{\pi} \notag \\
&=  -\left( - \mathbf{D} \BLog(\mathbf{z}'')  - \mathbf{C}\BLog(\mathbf{z}) 
+ \mathbf{D}\mathbf{B}^{-1} i(\boldsymbol \nu -i\boldsymbol{\tilde u}) \right) 
+ (\mathbf{B}^{-1})^{\!\top} i \boldsymbol{\pi}.
\end{align}
By (\ref{NZsymp2}), we have
$$
\tilde{\mathrm{x}} = \mathrm{H}^\C_{X,m}(\mathbf{z}) + n_1 \mathrm{H}^\C_{X,l}(\mathbf{z}) + i n_2 \pi
$$
for some $n_1,n_2 \in \QQ$. 
\end{proof}

The following proposition will cover Theorem \ref{thm:Jones:FAMED} (1).

\begin{proposition}\label{pfexistJ} 
Assume that $X$ is FAMED for $l$.
Then there exists a Jones function $\mathfrak{J}_X\colon \R_{>0} \times \mathcal{W} \to \C$ (where $\mathcal{W}\subset \C$ is an open horizontal band) that is independent of $\alpha\in\mathscr{A}_X$ such that 
\begin{align*}
&|\mathscr{Z}_{\hbar}(X, \alpha)| 
=\left| 
\int_{\RR + i\mu_X(\alpha) } 
\mathfrak{J}_X(\hbar,\mathrm{x})
e^{\frac{\mathrm{x} \lambda_X(\alpha)}{4\pi \hbar}} d\mathrm{x} \right|,
\end{align*}
where $\lambda_X(\alpha):=\mathrm{H}^\R_{X,l}(\alpha)$, $n_1 \in \Q$ is given in Lemma \ref{xmeaning} and
$\mu_X(\alpha)=\mathrm{H}^\R_{X,m+n_1 l}(\alpha)$ is the angular holonomy of the curve $m+n_1 l$. 
\end{proposition}
\begin{proof} 
Let us denote $\lambda_X(\alpha):=\mathrm{H}^\R_{X,l}(\alpha)$.
From Proposition \ref{Tpartiexpress2}, we can write 
\begin{align*}
&\ |\mathcal{Z}_{\hbar}(X,\alpha) | = \frac{1}{|\det \mathcal{A}|}
\Big(\frac{1}{2\pi \sqrt{\hbar}}\Big)^{N}\times \notag\\
&\   \left| 
\int_{\mathscr{Y}_\alpha} \left(
e^{\frac{1}{2\pi\hbar}\left(i \mathbf{y}^{\!\top} Q \mathbf{y} + \mathbf{y}^{\!\top} (\mathbf{B}^{-1}\boldsymbol \nu  - 
\mathscr{G}
\boldsymbol{\pi})
- i\sum_{k=1}^N \left(\frac{\varepsilon(T_k)+1}{4}\right) y_k^2\right)}
\prod_{k=1}^N \Phi_\B\left(\frac{y_k}{2\pi \sqrt{\hbar}}\right) \right)
e^{\frac{\mathrm{\tilde x}\lambda_X(\alpha)}{4\pi\hbar}}d\mathbf{y} \right|.
\end{align*}
where $\tilde{\mathrm{x}}$ is the last entry of 
$$2 (\mathbf{B}^{-1})^{\!\top}\mathbf{y}. $$
Since $\lambda_X(\alpha)\in \RR$, we can also write
\begin{align*}
&\ |\mathcal{Z}_{\hbar}(X,\alpha) | = \frac{1}{|\det \mathcal{A}|}
\Big(\frac{1}{2\pi \sqrt{\hbar}}\Big)^{N}\times \notag\\
&\   \left| 
\int_{\mathscr{Y}_\alpha} \left(
e^{\frac{1}{2\pi\hbar}\left(i \mathbf{y}^{\!\top} Q \mathbf{y} + \mathbf{y}^{\!\top} (\mathbf{B}^{-1}\boldsymbol \nu  - 
\mathscr{G}
\boldsymbol{\pi})
- i\sum_{k=1}^N \left(\frac{\varepsilon(T_k)+1}{4}\right) y_k^2\right)}
\prod_{k=1}^N \Phi_\B\left(\frac{y_k}{2\pi \sqrt{\hbar}}\right) \right)
e^{\frac{\mathrm{x}\lambda_X(\alpha)}{4\pi\hbar}}d\mathbf{y} \right|.
\end{align*}
where $\mathrm{x} = \tilde{\mathrm{x}} - in_2\pi$ and $n_2$ is the rational number defined in Lemma \ref{xmeaning}. 
Since $\mathbf{B}$ is a matrix with integer coefficients, we can let $\mathrm{x} = \sum_{k=1}^N q_k y_k - i n_2 \pi$, where $q_k \in \QQ$. Without loss of generality, assume that $q_1 \neq 0$. Consider the affine isomorphism $L: \CC^N \to \CC^N$  defined by 
$$L: (y_1,y_2,\dots, y_N) \mapsto (\mathrm{x}, y_2,\dots, y_N) = \left( \sum_{k=1}^N q_k y_k - in_2\pi , y_2,\dots, y_N \right).$$
Observe that the jacobian of $L$ is $q_1 \neq 0$.
Let $\mathscr{Y}_\alpha'$ be the multi-contour contour defined by
$$\mathscr{Y}_\alpha' = \prod_{k=2}^N \Big(\RR + i(\pi - a_k)\Big) . $$
Then we have
\begin{align}\label{ZtoJconv}
&|\mathscr{Z}_{\hbar}(X, \alpha)| 
= \left| 
\int_{\RR + i\mu_X(\alpha)} 
\mathfrak{J}_X(\mathrm{x},\hbar)
e^{\frac{\mathrm{x} \lambda_X(\alpha)}{4 \pi \hbar}} d\mathrm{x} \right| ,
\end{align}
where $\mu_X(\alpha)=\mathrm{H}^\R_{X,m+n_1 l}(\alpha)$,
\begin{align}\label{ZtoJconv2}
&\mathfrak{J}_X(\mathrm{x},\hbar)= \frac{q_1}{\det \mathcal{A}}\Big(\frac{1}{2\pi \sqrt{\hbar}}\Big)^{N} \notag\\
&\times \int_{\mathcal{Y}_\alpha'} \left( e^{\frac{1}{2\pi\hbar}\left(i \mathbf{y}^{\!\top} Q \mathbf{y} + \mathbf{y}^{\!\top} (\mathbf{B}^{-1}\boldsymbol \nu  - 
\mathscr{G}
\boldsymbol{\pi})
- i\sum_{k=1}^N \left(\frac{\varepsilon(T_k)+1}{4}\right) y_k^2\right)}
\prod_{k=1}^N \Phi_\B\left(\frac{y_k}{2\pi \sqrt{\hbar}}\right) \right)
 d y_2 \dots dy_N
\end{align}
and
$$ y_1 = \frac{\mathrm{x} -  \sum_{k=2}^N q_k y_k + in_2\pi }{q_1}. $$ 
Note that (\ref{ZtoJconv}) holds for any $\alpha \in \mathcal{A}_X$. In particular, the integrand in (\ref{ZtoJconv2}) decays exponentially at infinity and the integral converges absolutely. Similar to the proof of Proposition \ref{Tpartiexpress2}, we can deform the integration multi-contour to any $\mathscr{Y}_{\alpha'}' $ for any $\alpha' \in \mathcal{A}_X$. This shows that $\mathfrak{J}_X$ is independent on the choice of angle structure. This completes the proof.
\end{proof}

\subsection{Potential function and its properties}\label{subsect:potentialpropJ}
We still denote $\lambda_X(\alpha):=\mathrm{H}^\R_{X,l}(\alpha)$.
Note that under the isomorphism $L$, the potential function $\tilde S$ can be written as
\begin{align*}
\tilde{S}^J(\mathrm{x}, y_2,\dots, y_N ; \xi ) 
&:= \tilde{S}\left (L^{-1}(\mathrm{x}, y_2,\dots, y_N); \xi\right) \\
&\ = J(\mathrm{x},y_2,\dots, y_N) - \frac{i\xi\mathrm{x}}{2}, 
\end{align*}
where 
\begin{align*}
&J(\mathrm{x},y_2,\dots, y_N) 
:=\ i \mathbf{y}^{\!\top}
Q \mathbf{y}
 + \mathbf{y}^{\!\top} (\mathbf{B}^{-1}\boldsymbol \nu  - 
\mathscr{G}
\boldsymbol{\pi}) -  i\sum_{k=1}^N \left(\frac{\varepsilon(T_k)+1}{4}\right) y_k^2
 - i \sum_{k=1}^N \mathrm{Li}_2\left(-e^{y_k}\right)
\end{align*}
is independent on $\xi$.

As we will now connect the potential function $J$ to the Neumann-Zagier potential function, we refer the reader to Section \ref{NZpotentintro} for a reminder on the local complex variables $w^{loc}_\gamma$ which represent the complex holonomies $\mathrm{H}^\C_{X,\gamma}(\mathbf{z})$ associated to a curve $\gamma$.

Here we consider the pair of curves $(l,m)$, the associated local variables $w^{loc}_l, w^{loc}_m$ and the transition map $\Psi^{loc}_{l,m}$ such that
$w^{loc}_{m}
=\Psi^{loc}_{l,m}
(w^{loc}_{l})$.

\begin{lemma}\label{xbiholo}
The map
$$
\left(\Psi^{loc}_{l,m}+n_1\id\right): w^{loc}_{l} \mapsto w^{loc}_{m} + n_1 w^{loc}_{l}
$$
is a local biholomorphism at $0$. Moreover, this map sends $0$ to $0$.
\end{lemma}
\begin{proof}
By \cite[Lemma 4.1 (a)]{NZ} at the complete hyperbolic structure, we have $\frac{\partial w^{loc}_{m}}{\partial w^{loc}_{l}} \not \in \RR$. Thus,
$$
\frac{\partial {(w^{loc}_{m}+ n_1 w^{loc}_{l})}}{\partial w^{loc}_{l}} = \frac{\partial  w^{loc}_{m}}{\partial w^{loc}_{l}} + n_1 \neq 0 .$$ 
The first claim follows from the inverse function theorem. The second claim follows from the fact that at the complete hyperbolic structure, we have $w^{loc}_{l}=w^{loc}_{m} = 0$.
\end{proof}
From Lemma \ref{xmeaning}, we can interpret $\mathrm{x} = \mathrm{x}' - in_2\pi$ as  
$w^{loc}_{m}+n_1 w^{loc}_{l}$ or
$\mathrm{H}^\C_{X,m}(\mathbf{z})+n_1 \mathrm{H}^\C_{X,l}(\mathbf{z})$.
We study the properties of the potential function $J$ for $\mathrm{x}$ sufficiently close to $0$. Let $\mathscr{U}'$ be the multi-dimensional horizontal band defined by
$$
\mathscr{U}' = \prod_{k=2}^N (\RR + i (0,\pi)).
$$
\begin{proposition}\label{concavSxgen}
For each $\mathrm{x}$ (not necessarily close to $0$), the real part of $J(\mathrm{x}, y_2,\dots, y_N)$ is strictly concave
on 
every multi-dimensional real horizontal piece for the real parts of the variables $y_i$. 
\end{proposition}
\begin{proof}
The proof is similar to that of Proposition \ref{concavSgen}.

\end{proof}

From Lemma \ref{xbiholo}, given $\mathrm{x}= w^{loc}_m + n_1 w^{loc}_l$ sufficiently close to $0$, we have a corresponding value 
$$w^{loc}_l(\mathrm{x}):=\left(\Psi^{loc}_{l,m}+n_1\id\right)^{-1}(\mathrm{x}).$$
Observe that $w^{loc}_l$, the complex holonomy of the curve $l$, may not be purely imaginary as in the previous section.

Let 
$$\mathbf{y_c}(\mathrm{x}) = (y_{c,1}(\mathrm{x}), y_{c,2}(\mathrm{x}), \dots, y_{c,N}(\mathrm{x}))$$ 
be the corresponding critical point of $\tilde S(y_1,\dots, y_N ; w^{loc}_l(\mathrm{x}) )$ with respect to $y_1,\dots, y_N$.

Observe that when $w^{loc}_l(\mathrm{x})$ is purely imaginary, we retrieve the usual angular holonomy $\frac{1}{i} \mathrm{H}^\C_{X,l}(\mathbf{z}) =\mathrm{H}^\R_{X,l}(\alpha)$.

\begin{proposition}\label{JandNZ}
For any $\mathrm{x}$ sufficiently close to 0, the point $(y_{c,2}(\mathrm{x}), \dots, y_{c,N}(\mathrm{x}))$ is a critical point of $J(\mathrm{x}, y_2,\dots, y_N)$ with respect to $y_2,\dots, y_N$. Furthermore, 
the critical value $J(\mathrm{x}, y_{c,2}(\mathrm{x}), \dots, y_{c,N}(\mathrm{x}))$ satisfies
\begin{align*}
\frac{\partial}{\partial \mathrm{x}} J(\mathrm{x}, y_{c,2}(\mathrm{x}), \dots, y_{c,N}(\mathrm{x})) = \frac{i w^{loc}_l(\mathrm{x})}{2} \qquad\text{and}\qquad \Re J(0, y_{c,2}(0), \dots, y_{c,N}(0)) = - \Vol(M).
\end{align*}
In particular, we have $ J(\mathrm{x}, y_{c,2}(\mathrm{x}), \dots, y_{c,N}(\mathrm{x})) = i\phi_{m,l}(\mathrm{x}) + C$ for some imaginary constant $C\in \CC$.
\end{proposition}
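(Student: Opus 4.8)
The plan is to deduce everything from the identity $S(\mathrm{x}, y_2, \dots, y_N; \lambda_X(\alpha)) = J(\mathrm{x}, y_2, \dots, y_N) + \tfrac{\mathrm{x}\lambda_X(\alpha)}{2}$ recorded above, together with the critical point correspondence of Proposition \ref{critThurscorrespondence}, a short envelope-theorem computation, and the volume formula of Proposition \ref{dilogVolgen}.

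For the critical point claim, I would first recall that $\mathbf{y}(\mathrm{x})$ is, by its very definition and by Proposition \ref{critThurscorrespondence}, a critical point of $S(\,\cdot\,;\lambda_X(\alpha))$ in \emph{all} the variables $y_1,\dots,y_N$, where $\lambda_X(\alpha)$ is the angle with $i\lambda_X(\alpha)=\mathrm{H}(l)(\mathrm{x})$; moreover the $\mathrm{x}$-coordinate of $\mathbf{y}(\mathrm{x})$, namely $a_0+\sum_k a_k y_k(\mathrm{x})$, equals the prescribed $\mathrm{x}$ by combining Lemma \ref{xmeaning} with the defining property of $\mathrm{H}(l)(\mathrm{x})$ in Lemma \ref{xbiholo}. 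Since $L$ is a linear isomorphism, $\nabla S$ then also vanishes in the coordinates $(\mathrm{x}, y_2, \dots, y_N)$ at $L(\mathbf{y}(\mathrm{x}))=(\mathrm{x}, y_2(\mathrm{x}),\dots,y_N(\mathrm{x}))$; in particular $\partial S/\partial y_j=0$ for $j\ge 2$. As the term $\tfrac{\mathrm{x}\lambda_X(\alpha)}{2}$ does not involve $y_2,\dots,y_N$, the same holds with $J$ in place of $S$, which is exactly the first assertion.

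For the derivative of the critical value, note that by Neumann--Zagier theory the geometric solution depends holomorphically on the holonomy, so via Lemma \ref{xbiholo} the map $\mathrm{x}\mapsto \mathbf{y}(\mathrm{x})$ is holomorphic near $0$ (alternatively, apply the implicit function theorem, using that the Hessian of $J(\mathrm{x},\cdot)$ is non-degenerate by Proposition \ref{concavSxgen}). Differentiating $g(\mathrm{x}):=J(\mathrm{x}, y_2(\mathrm{x}),\dots,y_N(\mathrm{x}))$ and discarding the chain-rule terms, which vanish since $\partial J/\partial y_j=0$ at the critical point, leaves $g'(\mathrm{x})=\bigl(\partial J/\partial\mathrm{x}\bigr)\big|_{(\mathrm{x},\mathbf{y}(\mathrm{x}))}$. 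Writing $\partial J/\partial\mathrm{x}=\partial S/\partial\mathrm{x}-\tfrac{\lambda_X(\alpha)}{2}$ (with $\lambda_X(\alpha)$ held fixed) and invoking once more that the full gradient of $S$ vanishes at $\mathbf{y}(\mathrm{x})$, hence so does its $\mathrm{x}$-component, I obtain $g'(\mathrm{x})=-\tfrac{\lambda_X(\alpha)}{2}=\tfrac{i\mathrm{H}(l)}{2}$, using $\mathrm{H}(l)=i\lambda_X(\alpha)$.

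For the value at $\mathrm{x}=0$: by Lemma \ref{xbiholo} the associated holonomy is $\mathrm{H}(l)(0)=0$, so $\lambda_X(\alpha)=0$ and $\mathbf{y}(0)$ is, by Proposition \ref{critThurscorrespondence}, the critical point attached to the complete hyperbolic structure. Since both $\mathrm{x}$ and $\lambda_X(\alpha)$ vanish, $J(0,y_2(0),\dots,y_N(0))=S(\mathbf{y}(0);0)$, and Proposition \ref{dilogVolgen} gives $\Re S(\mathbf{y}(0);0)=-\Vol\bigl(\SS^3\setminus K,\ \mathrm{H}(l)=0\bigr)=-\Vol(\SS^3\setminus K)$. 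Apart from this, the argument is pure bookkeeping; the one point that genuinely needs checking --- and the closest thing to an obstacle --- is the compatibility of the $\mathrm{x}$-coordinate of $\mathbf{y}(\mathrm{x})$ with the prescribed parameter $\mathrm{x}$, i.e.\ that $L(\mathbf{y}(\mathrm{x}))$ really has first coordinate $\mathrm{x}$, which is where Lemma \ref{xmeaning} and the normalisation $\mathrm{x}=\mathrm{H}(m)+n_1\mathrm{H}(l)$ from Lemma \ref{xbiholo} are used in tandem.
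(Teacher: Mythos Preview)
Your proposal is correct and follows essentially the same route as the paper's proof: both use that $\mathbf{y}(\mathrm{x})$ is a critical point of $S$ in all variables (so after the linear change $L$ the $y_j$-partials of $J$ vanish), then apply the envelope/chain-rule computation to get $\partial_{\mathrm{x}}J = -\lambda_X(\alpha)/2 = i\mathrm{H}(l)/2$, and finally invoke Proposition~\ref{dilogVolgen} at the complete structure for the real part at $\mathrm{x}=0$. You are a touch more explicit than the paper about the consistency check that $L(\mathbf{y}(\mathrm{x}))$ has first coordinate equal to the prescribed $\mathrm{x}$ (via Lemmas~\ref{xmeaning} and~\ref{xbiholo}), which the paper leaves implicit.
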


\begin{proof}
Note that, since $S^J=S \circ L^{-1}$ and $L$ is an affine isomorphism, we have 
$$\nabla_{\mathbf{y}} S(y_1,\dots, y_N) = ( S_{y_1} ,\dots, S_{y_N}) = 0$$
 if and only if
$$  \nabla S^J(\mathrm{x}, y_2, \dots, y_N ) = ( J_{\mathrm{x}} -i\xi/2, J_{y_2},\dots, J_{y_N}) = 0.$$
This implies that 
$$ J_{y_2} (\mathrm{x}, y_{c,2}(\mathrm{x}), \dots, y_{c,N}(\mathrm{x})) = \dots= 
J_{y_N}(\mathrm{x}, y_{c,2}(\mathrm{x}), \dots, y_{c,N}(\mathrm{x})) = 0$$
for all $\mathrm{x}$. Moreover, 
$$
\frac{\partial}{\partial \mathrm{x}} \Big( J(\mathrm{x}, y_{c,2}(\mathrm{x}), \dots, y_{c,N}(\mathrm{x}) \Big)
= J_{\mathrm{x}} + \sum_{k=2}^N J_{y_k} \cdot y_k'(\mathrm{x})  
= J_{\mathrm{x}} ,
$$
where the last equality follows from the fact that $(y_{c,2}(\mathrm{x}), \dots, y_{c,N}(\mathrm{x}))$ is the critical point of $J$ with respect to $y_2,\dots, y_N$. As a result, we have
$$
\frac{\partial}{\partial \mathrm{x}} \Big(J(\mathrm{x}, y_{c,2}(\mathrm{x}), \dots, y_{c,N}(\mathrm{x})) \Big)
= \frac{i\xi}{2} = \frac{i w_l^{loc}(\mathrm{x})}{2},
$$
where the last equality follows from Proposition \ref{critThurscorrespondence}.
Finally, when $w^{loc}_l=0$, by Lemma \ref{xbiholo}, we have $\mathrm{x} = 0$. Moreover, when $\mathrm{x}=0$, we have 
\begin{align*}
S\left(0 , y_2(0),\dots, y_N(0)\right) 
&= J\left(0 , y_2(0),\dots, y_N(0)\right). 
\end{align*}
Thus, by Proposition \ref{dilogVolgen}, we have
\begin{align}\label{ReJgiveVol}
\mathrm{Re} \left(J\left(0 , y_{c,2}(0), \dots, y_{c,N}(0)\right)  \right)
= -\Vol(M). 
\end{align}
Finally, by (\ref{NZprop}), since 
$$
\frac{\partial}{\partial \mathrm{x}} 
\Big(J(\mathrm{x}, y_{c,2}(\mathrm{x}), \dots, y_{c,N}(\mathrm{x})) - i\phi_{m,l}(\mathrm{x}) \Big)
= \frac{iw_l^{loc}(\mathrm{x})}{2} - \frac{iw_l^{loc}(\mathrm{x})}{2} = 0,
$$
we have $J(\mathrm{x}, y_{c,2}(\mathrm{x}), \dots, y_{c,N}(\mathrm{x})) - i\phi_{m,l}(\mathrm{x})) - i\phi_{m,l}(\mathrm{x}) = C$ for some constant $C$. Note that at $\mathrm{x}=0$, by (\ref{NZprop}) and (\ref{ReJgiveVol}), we have
$\mathrm{Re}(C) = 0$. Thus, $C$ is an imaginary constant.
\end{proof}

Recall that $\mathrm{x} = \sum_{k=1}^N q_k y_k - in_2\pi$ with $q_1 \neq 0$.
The following proposition is an analogue of \cite[Lemma 3.3]{WY} and should be compared with the change of curve formula of the torsion.
\begin{proposition}\label{1loopJ}
For $x$ sufficiently close to $0$,
we have
$$ \det(\Hess_{\mathbf{y}} \tilde S) (\mathbf{y_c}(\mathrm{x})) = \frac{i}{2 (q_1)^2 } \frac{\partial \mathrm{H}^\C_{X,l}(\mathbf{y_c}(\mathrm{x}))}{\partial\mathrm{x}}  \det (\Hess_{{y_2,\ldots,y_N}
} J)(\mathrm{x}, y_{c,2}(\mathrm{x}), \dots, y_{c,N}(\mathrm{x})) . $$
In particular, if $n_1$ in Lemma \ref{xmeaning} is equal to 0, then 
$$ \det(\Hess_{\mathbf{y}} \tilde S) (\mathbf{y_c}(\mathrm{x})) = \frac{i}{2 (q_1)^2 } \frac{\partial w^{loc}_l(\mathrm{x})}{\partial w^{loc}_m}  \det (\Hess_{y_2,\ldots,y_N} J)(\mathrm{x}, y_{c,2}(\mathrm{x}), \dots, y_{c,N}(\mathrm{x})) . $$
\end{proposition}

\begin{proof}
Recall that $\tilde S^J(x,y_2,\dots, y_N) = \tilde S(L(y_1,\dots, y_N))$, where $L: \CC^N \to \CC^N$ is the affine isomorphism with determinant $q_1 \neq 0$ defined by  
$$(y_1,y_2,\dots, y_N) \mapsto (\mathrm{x}, y_2,\dots, y_N) = \left(\sum_{k=1}^N {q_k} y_k -in_2\pi, y_2,\dots, y_N \right).$$
In particular, we have
$$ (\Hess_{\mathrm{x},y_2,\ldots,y_N} \tilde S^J) (\mathrm{x}, y_{c,2}(\mathrm{x}), \dots, y_{c,N}(\mathrm{x}))  
= L^{\!\top} (\Hess_{\mathbf{y}} \tilde S) (\mathbf{y_c}(\mathrm{x})) L $$
and 
\begin{align*}
\det(\Hess_{\mathrm{x},y_2,\ldots,y_N} \tilde S^J) (\mathrm{x}, y_{c,2}(\mathrm{x}), \dots, y_{c,N}(\mathrm{x}))
&= \det(L^{\!\top})  \det(\Hess_{\mathbf{y}} \tilde S) (\mathbf{y_c}(\mathrm{x})) \det(L) \\
&= (q_1)^2\det(\Hess_{\mathbf{y}} \tilde S) (\mathbf{y_c}(\mathrm{x})) .
\end{align*}
Thus, 
$$
\det(\Hess_{\mathbf{y}}  \tilde S) (\mathbf{y}(\mathrm{x})) 
= \frac{1}{(q_1)^2}\det(\Hess_{\mathrm{x},y_2,\ldots,y_N} \tilde S^J) (\mathrm{x}, y_{c,2}(\mathrm{x}), \dots, y_{c,N}(\mathrm{x})) 
$$
and it suffices to compute $\det(\Hess_{\mathrm{x},y_2,\ldots,y_N} \tilde S^J) (\mathrm{x}, y_{c,2}(\mathrm{x}), \dots, y_{c,N}(\mathrm{x}))$. 
We claim that 
\begin{enumerate}
\item[(1)] for $i,j \in \{2,\dots, N\}$, 
\begin{align*}
\frac{\partial^2 \tilde S^J}{\partial y_i \partial y_j} (\mathrm{x}, y_{c,2}(\mathrm{x}), \dots, y_{c,N}(\mathrm{x}))= 
\frac{\partial^2 J}{\partial y_i \partial y_j} 
(\mathrm{x}, y_{c,2}(\mathrm{x}), \dots, y_{c,N}(\mathrm{x})),
\end{align*}
\item[(2)] for $i \in \{2,\dots, N\}$, 
\begin{align*}
\frac{\partial^2 \tilde S^J}{\partial \mathrm{x} \partial y_i} (\mathrm{x}, y_{c,2}(\mathrm{x}), \dots, y_{c,N}(\mathrm{x})) = - 
\sum_{l=2}^N\frac{\partial^2 J}{ \partial y_l\partial y_i}(\mathrm{x}, y_{c,2}(\mathrm{x}), \dots, y_{c,N}(\mathrm{x}) ) \cdot \Bigg( \frac{\partial y_{c,l}(\mathrm{x})}{\partial \mathrm{x}} \Bigg),
\end{align*}
\item[(3)] for the second derivative with respect to $\mathrm{x}$,
\begin{align*}
&\frac{\partial^2 \tilde S^J}{\partial \mathrm{x}^2}
(\mathrm{x}, y_{c,2}(\mathrm{x}), \dots, y_{c,N}(\mathrm{x}))\\
&= \frac{i}{2} \frac{\partial w^{loc}_{l}(\mathrm{x})}{\partial \mathrm{x}} + \sum_{l_1,l_2=2}^N \Bigg(\frac{\partial^2 J}{\partial y_{l_1} \partial y_{l_2}} (\mathrm{x}, y_{c,2}(\mathrm{x}), \dots, y_{c,N}(\mathrm{x})) \cdot \bigg( \frac{\partial y_{c,l_1}(\mathrm{x})}{\partial \mathrm{x}} \bigg)  \bigg( \frac{\partial y_{c,l_2}(\mathrm{x})}{\partial \mathrm{x}} \bigg) \Bigg).
\end{align*}
\end{enumerate}
Assuming these claims, if we write 
$$k_i(\mathrm{x}) := - \frac{\partial y_{c,i}(\mathrm{x})}{\partial \mathrm{x}}
$$
 for $i=2,\dots, N$ and $\tilde D := (\Hess_{y_2,\ldots,y_N} J)(\mathrm{x}, y_{c,2}(\mathrm{x}), \dots, y_{c,N}(\mathrm{x})) $,  then we have
$$
\det(\Hess_{\mathrm{x},y_2,\ldots,y_N} \tilde S^J) (\mathrm{x},  y_{c,2}(\mathrm{x}), \dots, y_{c,N}(\mathrm{x})))
= P \cdot D \cdot P^{\!\top},
$$
where 
$
P=
\begin{pNiceArray}{c | c c c}
1& k_2(\mathrm{x}) & \dots &   k_N(\mathrm{x})  \\ \hline
  0  &  \Block{3-3}{I}  \\
  \vdots & \\
  0 & \\
\end{pNiceArray}
\quad \text{ and } \quad
D=
\begin{pNiceArray}{c | c c c}
\frac{i}{2}\frac{\partial w^{loc}_l}{\partial w^{loc}_m}  & 0 & \dots &  0 \\ \hline
  0  &  \Block{3-3}{\tilde D} \\
  \vdots & \\
  0 & \\
\end{pNiceArray}.
$

This implies the desired result. Thus, it suffices to prove Claims (1)-(3). Claim (1) follows from the definition of $S^J$. Next, for $i\in \{2,\dots, N\}$, since
$$  \frac{\partial \tilde S^J}{ \partial y_i}(\mathrm{x}, y_{c,2}(\mathrm{x}), \dots, y_{c,N}(\mathrm{x})  ) = 0,$$
by differentiating both sides with respect to $\mathrm{x}$, by the chain rule, we have
$$
\frac{\partial^2 \tilde S^J}{\partial \mathrm{x} \partial y_i}(\mathrm{x}, y_{c,2}(\mathrm{x}), \dots, y_{c,N}(\mathrm{x})) \cdot \Bigg( \frac{\partial \mathrm{x}}{\partial \mathrm{x}} \Bigg) + 
\sum_{l=2}^N\frac{\partial^2 \tilde S^J}{ \partial y_l\partial y_i}(\mathrm{x}, y_{c,2}(\mathrm{x}), \dots, y_{c,N}(\mathrm{x}))\cdot \Bigg( \frac{\partial y_{c,l}(\mathrm{x})}{\partial \mathrm{x}} \Bigg) = 0
$$
and Claim (2) follows. Finally, by Proposition \ref{JandNZ}, since 
$$\frac{\partial}{\partial \mathrm{x}} J(\mathrm{x}, y_{c,2}(\mathrm{x}), \dots, y_{c,N}(\mathrm{x})) = \frac{i w^{loc}_l(\mathrm{x})}{2},$$ 
by differentiating both sides with respect to $\mathrm{x}$, we have
$$
\frac{\partial^2 J}{\partial \mathrm{x}^2}(\mathrm{x}, y_{c,2}(\mathrm{x}), \dots, y_{c,N}(\mathrm{x})) 
+ \sum_{l=2}^N \frac{\partial^2 J}{\partial \mathrm{x} \partial y_l}(\mathrm{x}, y_{c,2}(\mathrm{x}), \dots, y_{c,N}(\mathrm{x}))
\cdot \Bigg( \frac{\partial y_{c,l}(\mathrm{x})}{\partial \mathrm{x}} \Bigg) 
= \frac{i}{2} \frac{\partial  w^{loc}_l(\mathrm{x})}{\partial \mathrm{x}}.
$$
Observe that
$$
\frac{\partial^2 \tilde S^J}{\partial \mathrm{x}^2}(\mathrm{x}, y_{c,2}(\mathrm{x}), \dots, y_{c,N}(\mathrm{x})) = \frac{\partial^2 J}{\partial \mathrm{x}^2}(\mathrm{x}, y_{c,2}(\mathrm{x}), \dots, y_{c,N}(\mathrm{x}))$$
and
$$
\frac{\partial^2 \tilde S^J}{\partial \mathrm{x} \partial y_l}(\mathrm{x}, y_{c,2}(\mathrm{x}), \dots, y_{c,N}(\mathrm{x}))=\frac{\partial^2 J}{\partial \mathrm{x} \partial y_l}(\mathrm{x},y_{c,2}(\mathrm{x}), \dots, y_{c,N}(\mathrm{x})).$$
Thus, we have
$$
 \frac{\partial^2 \tilde S^J}{\partial \mathrm{x}^2}(\mathrm{x}, y_{c,2}(\mathrm{x}), \dots, y_{c,N}(\mathrm{x})) 
= \frac{i}{2} \frac{\partial w^{loc}_l(\mathrm{x})}{\partial \mathrm{x}} - 
 \sum_{l=2}^N\frac{\partial^2 \tilde S^J}{\partial \mathrm{x} \partial y_l}(\mathrm{x}, y_{c,2}(\mathrm{x}), \dots, y_{c,N}(\mathrm{x}) )\cdot \Bigg( \frac{\partial y_{c,l}(\mathrm{x})}{\partial \mathrm{x}} \Bigg).
$$
Claim (3) then follows from Claim (2).
\end{proof}

\begin{corollary}
Assume that $\mathbf{B}$ is invertible and $\mathscr{G} = \mathbf{B}^{-1} \mathbf{A}$. Then 
\begin{align*}
&\frac{1}{\sqrt{\pm \det (\Hess_{y_2,\ldots,y_N} J)(\mathrm{x}, y_{c,2}(\mathrm{x}), \dots, y_{c,N}(\mathrm{x}))}}\\
&=  
\frac{1}{q_1 \sqrt{\pm 4 i^{N-1} 
 \left(\prod_{i=1}^N z_i^{-f_i''} z_i''^{f_i - 1} \right) \det( \mathbf{B}^{-1})
 \tau(M, m+n_1 l, X, \mathbf{z^c})}},
\end{align*}
where $n_1 $ is the rational number in Lemma \ref{xmeaning}.
\end{corollary}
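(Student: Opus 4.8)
The plan is to chain the two Hessian identities already established and then reduce the statement to a change-of-curve identity for the $1$-loop invariant. First, Proposition~\ref{1loopJ} rearranges to
$$\det(\Hess_{\mathbf{y}} J)(\mathrm{x},\mathbf{y}(\mathrm{x})) = \frac{2a_1^2}{i}\,\frac{\partial \mathrm{x}}{\partial \mathrm{H}(l)}\,\det(\Hess_{\mathbf{y}} S)(\mathbf{y}(\mathrm{x})),$$
the derivative being nonzero near $0$ by Lemma~\ref{xbiholo}. Next, since $\mathbf{B}$ is invertible and $\mathscr{G}=\mathbf{B}^{-1}\mathbf{A}$, Proposition~\ref{Hesstotor} turns $\det(\Hess_{\mathbf{y}} S)(\mathbf{y}(\mathrm{x}))$ into $\pm 2i^{N}\det\mathbf{B}^{-1}\bigl(\prod_{i=1}^{N} z_i^{-f_i''}z_i''^{f_i-1}\bigr)\tau(\SS^{3}\setminus K,l,\mathbf{z^c},X)$. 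Substituting and using $\tfrac{1}{i}\,i^{N}=i^{N-1}$ gives
$$\det(\Hess_{\mathbf{y}} J)(\mathrm{x},\mathbf{y}(\mathrm{x})) = \pm 4a_1^2 i^{N-1}\Bigl(\prod_{i=1}^{N} z_i^{-f_i''}z_i''^{f_i-1}\Bigr)\,\frac{\partial \mathrm{x}}{\partial \mathrm{H}(l)}\,\det\mathbf{B}^{-1}\,\tau(\SS^{3}\setminus K,l,\mathbf{z^c},X).$$
By Lemma~\ref{xmeaning}, $\mathrm{x}$ differs from $\mathrm{H}(m)+n_1\mathrm{H}(l)$ by the constant $in_2\pi$, so $\tfrac{\partial \mathrm{x}}{\partial \mathrm{H}(l)}=\tfrac{\partial \mathrm{H}(m+n_1 l)}{\partial \mathrm{H}(l)}$; taking reciprocal square roots (the $a_1^2$ pulls out as $a_1$, the power of $i$ and the global sign being absorbed by the $\pm$), the corollary becomes equivalent to the identity
$$\tau(\SS^{3}\setminus K,m+n_1 l,\mathbf{z^c},X) = \pm\,\frac{\partial \mathrm{H}(m+n_1 l)}{\partial \mathrm{H}(l)}\,\det\mathbf{B}^{-1}\;\tau(\SS^{3}\setminus K,l,\mathbf{z^c},X).$$

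To prove this last identity I would argue from the determinant definition of the $1$-loop invariant. Because we use a \emph{strong} combinatorial flattening (Definition~\ref{SCF}), admissible simultaneously for every peripheral curve, the flattening monomial $\prod_i z_i^{f_i''}z_i''^{-f_i}$ is the same for $l$ and for $m+n_1 l$; hence the ratio $\tau(\SS^{3}\setminus K,m+n_1 l,\mathbf{z^c},X)/\tau(\SS^{3}\setminus K,l,\mathbf{z^c},X)$ equals the ratio of the two determinants $\det(\mathbf{A}'\Delta_{\mathbf{z}''}+\mathbf{B}'\Delta_{\mathbf{z}}^{-1})$ formed from the Neumann--Zagier matrices for these two curves. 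These two $N\times N$ matrices share their first $N-1$ rows (the edge rows) and differ only in the last row, which records the logarithmic holonomy of the chosen peripheral curve. Expanding both determinants by cofactors along that last row against the common vector $\mathbf{C}=(C_1,\dots,C_N)$ of signed minors of the edge rows --- a vector that spans the $1$-dimensional tangent line to the gluing variety at $\mathbf{z^c}$ --- reduces the claim to comparing $\mathbf{r}_{m+n_1 l}\cdot\mathbf{C}$ with $\mathbf{r}_{l}\cdot\mathbf{C}$. Rewriting the $m+n_1 l$-row in terms of the $l$-row by means of the symplectic relations \eqref{NZsymp1}--\eqref{NZsymp2} between $(\mathbf{A},\mathbf{B})$ and $(\mathbf{C},\mathbf{D})$, and identifying $\mathbf{r}_{\gamma}\cdot\mathbf{C}$ (up to the normalization $\det\mathbf{B}^{-1}$) with the derivative of $\mathrm{H}(\gamma)$ along the gluing variety, then produces exactly the Jacobian $\partial\mathrm{H}(m+n_1 l)/\partial\mathrm{H}(l)$ together with the factor $\det\mathbf{B}^{-1}$.

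The one genuine difficulty I anticipate lies in the bookkeeping of this last step: keeping track of the power of $i$ and the global sign (both harmless, since everything sits under a square root up to $\pm$), of the constant $in_2\pi$-shift between $\mathrm{x}$ and $\mathrm{H}(m+n_1 l)$, and --- most delicately --- of ensuring the Jacobian appears as $\partial\mathrm{H}(m+n_1 l)/\partial\mathrm{H}(l)$ rather than its reciprocal. A self-contained alternative, avoiding any appeal to a change-of-curve formula, is to rerun the computation of Proposition~\ref{Hesstotor} verbatim for the function $J$ with the symplectic Neumann--Zagier extension set up for the pair $(l,m+n_1 l)$ rather than $(m,l)$ as in Propositions~\ref{Hesstotor}--\ref{hvalue}; this would yield $\tau(\SS^{3}\setminus K,m+n_1 l,\mathbf{z^c},X)$ directly, at the cost of reproducing the symplectic bookkeeping of those two propositions.
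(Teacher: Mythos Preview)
Your reduction via Propositions~\ref{1loopJ} and~\ref{Hesstotor} is exactly what the paper does: it arrives at the same intermediate expression
\[
\frac{1}{a_1 \sqrt{\pm 4 i^{N-1} \det \mathbf{B}^{-1}\bigl(\textstyle\prod_{i} z_i^{-f_i''} z_i''^{f_i-1}\bigr)\,\dfrac{\partial\mathrm{x}}{\partial\mathrm{H}(l)}\,\tau(\SS^3\setminus K,l,\mathbf{z^c},X)}}.
\]
The paper then simply invokes the change-of-curve formula, cited as \cite[Theorem~4.1]{P}, rather than re-deriving it; your cofactor-expansion sketch is an attempt to supply that argument from scratch. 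So the overall architecture is the same, and the only divergence is whether one cites or reproves the final identity.

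There is, however, a genuine issue in your last step. The change-of-curve formula (for torsion in \cite{P}, and its analogue for the $1$-loop invariant) reads
\[
\tau(\SS^3\setminus K,\,m+n_1 l,\,\mathbf{z^c},X)\;=\;\pm\,\frac{\partial\mathrm{H}(m+n_1 l)}{\partial\mathrm{H}(l)}\;\tau(\SS^3\setminus K,\,l,\,\mathbf{z^c},X),
\]
with \emph{no} factor of $\det\mathbf{B}^{-1}$. Your target identity carries an extra $\det\mathbf{B}^{-1}$ because the corollary, as stated, has dropped it; compare with the parallel Proposition~\ref{Hesstotor} and with the $\sqrt{\det\mathbf{B}^{-1}}$ appearing explicitly in Theorems~\ref{mainthmZ} and~\ref{AsymJgen}. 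In other words, the intermediate formula you (and the paper) obtain is correct, the change-of-curve formula absorbs $\tfrac{\partial\mathrm{x}}{\partial\mathrm{H}(l)}\,\tau(l)$ into $\tau(m+n_1 l)$, and the factor $\det\mathbf{B}^{-1}$ should survive in the final statement. Your cofactor argument cannot manufacture that extra factor: the ratio of the two $N\times N$ determinants differing only in the last (holonomy) row is exactly the Jacobian, nothing more. So do not try to force $\det\mathbf{B}^{-1}$ into the change-of-curve step; instead, recognize that the stated corollary is missing it.
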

\begin{proof}
By Propositions \ref{1loopJ} and \ref{Hesstotor},
\begin{align*}
&\frac{1}{\sqrt{\pm \det (\Hess_{y_2,\ldots,y_N} J)(\mathrm{x}, y_{c,2}(\mathrm{x}), \dots, y_{c,N}(\mathrm{x}))}}\\
&=
\frac{1}{\sqrt{\pm \frac{2(q_1)^2 }{i} \frac{\partial\mathrm{x}}{\partial{w^{loc}_l}}
\det(\Hess_{\mathbf{y}} \tilde S) (\mathbf{y_c}(\mathrm{x}), \xi})}\\
&= 
\frac{1}{q_1 \sqrt{\pm 4 i^{N-1} \det \mathbf{B}^{-1}
 \left(\prod_{i=1}^N z_i^{-f_i''} z_i''^{f_i - 1} \right) \frac{\partial\mathrm{x}}{\partial w^{loc}_l}\tau(M,l,X, \mathbf{z^c})}}.
\end{align*}
This result follows from the change of curve formula of the 1-loop invariants \cite[Theorem 1.19]{PW}.
\end{proof}

\subsection{Asymptotic expansion formula of the Jones function}\label{subsect:AEFJ}

In this section we will prove the following Proposition \ref{AEFJ}, which covers part (2) of our main Theorem \ref{thm:Jones:FAMED}.

\begin{proposition}\label{AEFJ}
We have
\begin{align*}
|\mathfrak{J}_{X}(\hbar, \mathrm{x})|
= \left|\frac{ e^{\frac{i}{\pi}R(\mathbf{z}_\mathrm{x})}}{2\pi {\sqrt\hbar (\det \mathscr{A} \sqrt{ 4 \det\mathbf{B}^{-1}})} } \cdot \frac{e^{\frac{i}{2\pi {\hbar}}\phi_{m,l}(\mathrm{x})}}{\sqrt{\pm \tau(\SS^3 \setminus K, m+n_1 l, \mathbf{z^c}, X)}}  \Big(1 + O({\hbar})\Big) \right|,
\end{align*}
where 
$R$
is defined in Proposition \ref{hvalue}. In particular,
$$
\lim_{\hbar\to 0} 2\pi \hbar \log|\mathfrak{J}_{X}(\hbar,0)|
= - \Vol(M\setminus K).
$$
\end{proposition}

To prove Proposition \ref{AEFJ} we will proceed as in Steps 3 and 4 of the proof of Theorem \ref{mainthmZ}:
\begin{itemize}
    \item we express $|\mathfrak{J}_{X}(\hbar, \mathrm{x})|$ in function of the Jones potential function and an error term between classical and quantum dilogarithms,
    \item we cut the integral into a compact and non-compact part around the complete structure,
    \item we bound the non-compact part by something negligible, using concavity properties and uniform bounds on the error terms,
    \item on the compact piece we push the Taylor expansion further and use the saddle point method.
\end{itemize}
The main differences with the situation of Theorem \ref{mainthmZ} is that we work with a change of variables, one variable fewer, and one term $e^{\frac{\mathrm{x} \lambda_X(\alpha)}{4\pi {\hbar}}}$ fewer.

\begin{proof}

\underline{Step 1: Re-writing the Jones function}

We start by re-writing the Jones function with a convenient form.

Recall from (\ref{ZtoJconv2}) that
\begin{align*}
&\mathfrak{J}_X(\mathrm{x},\hbar)= \frac{q_1}{\det \mathcal{A}}\Big(\frac{1}{2\pi \sqrt{\hbar}}\Big)^{N} \notag\\
&\times \int_{\mathcal{Y}_\alpha'} \left( e^{\frac{1}{2\pi\hbar}\left(i \mathbf{y}^{\!\top} Q \mathbf{y} + \mathbf{y}^{\!\top}( \mathbf{B}^{-1}\boldsymbol \nu  - 
\mathscr{G}
\boldsymbol{\pi})
- i\sum_{k=1}^N \left(\frac{\varepsilon(T_k)+1}{4}\right) y_k^2\right)}
\prod_{k=1}^N \Phi_\B\left(\frac{y_k}{2\pi \sqrt{\hbar}}\right) \right)
 d y_2 \dots dy_N
\end{align*}
with 
$$ y_1 = \frac{\mathrm{x} -  \sum_{k=2}^N q_k y_k + in_2\pi }{q_1}. $$ 

Recall that the Jones potential function is
$$J(\mathrm{x},y_2,\dots, y_N) 
:=\ i \mathbf{y}^{\!\top}
Q \mathbf{y}
 + \mathbf{y}^{\!\top} (\mathbf{B}^{-1}\boldsymbol \nu  - 
\mathscr{G}
\boldsymbol{\pi}) - i\sum_{k=1}^N \left(\frac{\varepsilon(T_k)+1}{4}\right) y_k^2
 - i \sum_{k=1}^N \mathrm{Li}_2\left(-e^{y_k}\right).$$
 Hence we have
 $$\mathfrak{J}_X(\mathrm{x},\hbar)= \frac{q_1}{\det \mathcal{A}}\Big(\frac{1}{2\pi \sqrt{\hbar}}\Big)^{N} 
\int_{\mathcal{Y}_\alpha'} 
 e^{\frac{1}{2\pi \hbar} J(\mathrm{x},y_2,\dots, y_N) } 
e^{
\sum_{k=1}^N \Log \left( \Phi_\B\left(\frac{y_k}{2\pi \sqrt{\hbar}}\right) \right)
-\frac{-i}{2\pi\hbar} \mathrm{Li}_2\left(-e^{y_k}\right)
}
 d y_2 \dots dy_N
 $$

Recall that, as in the proof of Theorem \ref{mainthmZ},
by Proposition \ref{prop:quant:dilog:uniform} (4) we have for all $\B \in (0,\sqrt{\delta})$,
$$
e^{-N(C_\delta + (C/\delta+C')\B^2)}
\leqslant
\left | 
\exp \left (
\sum_{k=1}^N \Log \left( \Phi_\B\left(\frac{y_k}{2\pi \sqrt{\hbar}}\right) \right)
-\frac{-i}{2\pi\hbar} \mathrm{Li}_2\left(-e^{y_k}\right)
\right) \right |
\leqslant
e^{N(C_\delta + (C/\delta+C')\B^2)}
$$
where $\delta= \pi-\max_{k=1, \ldots,N}(|\Im(y_k)|)>0$ and $C,C',C_\delta>0$ as in Proposition \ref{prop:quant:dilog:uniform}.

\underline{Step 2: Reducing the proof  to a compact integral}

Let $\mathscr{O} \subset \CC$ be a sufficiently small neighborhood of $0$ such that for any $\mathrm{x} \in \mathscr{O}$, 
there exist shape parameters $\mathbf{z}$ with positive imaginary parts such that 
$$\mathrm{H}^\C_{X,m+n_1 l}(\mathbf{z})
=\mathrm{H}^\C_{X,m}(\mathbf{z})+n_1 \mathrm{H}^\C_{X,l}(\mathbf{z})
= \mathrm{x}.
$$
Such a neighborhood exists due to Lemma \ref{xbiholo}. For each $\mathrm{x} \in \mathscr{O}$,
let $\alpha(\mathbf{y_c}(\mathrm{x}))$ be the angle structure associated to the critical logarithmic shape parameters $\mathbf{y_c}(\mathrm{x})$.
Let $\mathscr{Y}'_{\mathrm{x}}$ be the multi-contour defined by
$$
\mathscr{Y}'_{\mathrm{x}}:=
\mathscr{Y}'_{\alpha(\mathbf{y_c}(\mathrm{x}))}
=  \prod_{k=1}^N \Big(\RR + i \mathrm{Im}(y_{c,k}(\mathrm{x}))\Big) .
$$

We let $r_0>0$ and 
$$\gamma_{\mathrm{x}} = \left\{\begin{pmatrix}
    y_2\\ \vdots\\ y_N
\end{pmatrix}\in \mathscr{Y}'_{\mathrm{x}} ; \left \| \begin{pmatrix}
    y_2\\ \vdots\\ y_N
\end{pmatrix} - 
\begin{pmatrix}
    y_{c,2}(\mathrm{x})\\ \vdots \\ y_{c,N}(\mathrm{x})
\end{pmatrix}
\right \| \leq r_0\right\}$$
be a $(N-1)$-dimensional ball inside $\mathscr{Y}'_{\mathrm{x}}$ containing the critical point $(y_{c,2}(\mathrm{x}),\ldots,y_{c,N}(\mathrm{x}))$.

By the bound on the error term in Step 1, similarly to \cite[Lemma 7.10]{BAGPN}, there exists $A_3,B_3 > 0$ such that for all $\B \in (0,A_3)$,
$$
\Bigg|
\int_{\mathcal{Y}_{\mathrm{x}}'\setminus \gamma_{\mathrm{x}}} 
 e^{\frac{1}{2\pi \hbar} J(\mathrm{x},y_2,\dots, y_N) } 
e^{
\sum_{k=1}^N \Log \left( \Phi_\B\left(\frac{y_k}{2\pi \sqrt{\hbar}}\right) \right)
-\frac{-i}{2\pi\hbar} \mathrm{Li}_2\left(-e^{y_k}\right)
}
 d y_2 \dots dy_N
\Bigg|
\leq B_3 e^{\frac{M_3}{2\pi \hbar}},
$$
where $M_3 = \max\{\Re S(\mathbf{y}) \mid \mathbf{y} \in \partial \gamma_{\mathrm{x}}\} $. By Propositions \ref{concavSxgen} and \ref{JandNZ}, we have that $M_3$ is strictly smaller than  $\Re S(\mathrm{x}, \mathbf{y}(\mathrm{x}))$ which is a global strict maximum (since it is a critical point of a strictly concave function). 
Thus we have $M_3 < \Re(\phi_{m,l}(\mathrm{x}))$ and the above integral on the non-compact part is negligible relative to the expected asymptotics.

\underline{Step 3: Proving the expected asymptotics on the compact integral}

Finally, by applying Proposition \ref{semihbar} to the compact integral we express it as such:
\begin{align*}
&  \int_{\gamma_{\mathrm{x}}} 
 e^{\frac{1}{2\pi \hbar} J(\mathrm{x},y_2,\dots, y_N) } 
e^{
\sum_{k=1}^N \Log \left( \Phi_\B\left(\frac{y_k}{2\pi \sqrt{\hbar}}\right) \right)
-\frac{-i}{2\pi\hbar} \mathrm{Li}_2\left(-e^{y_k}\right)
}
 d y_2 \dots dy_N
\\
&=   \int_{\gamma_{\mathrm{x}}} h(\mathbf{y})
 e^{\frac{1}{2\pi \hbar} J(\mathrm{x},y_2,\dots, y_N) } 
e^{
O(\hbar)
}
 d y_2 \dots dy_N,
\end{align*}
like in Step 4 of the proof of Theorem \ref{mainthmZ}.

Now we apply Proposition \ref{saddle} to the previous integral (with the same assumptions on bounds over compact sets as in the proof of Theorem \ref{mainthmZ}), and 
we obtain:
\begin{align*}
&\ \mathfrak{J}_{X}(\hbar, \mathrm{x}) \\
=&\ \frac{q_1}{\det \mathscr{A}}\Big(\frac{1}{2\pi {\sqrt{\hbar}}}\Big)^{N} \big(2\pi {\hbar}\big)^{\frac{N-1}{2}} \frac{h(L^{-1}(\mathrm{x}, y_2(\mathrm{x}),\dots,y_N(\mathrm{x})))}{\sqrt{\pm \frac{\det(\Hess J)(\mathrm{x},y_2(\mathrm{x}),\dots,y_N(\mathrm{x}))}{(2\pi)^{N-1}}}} e^{\frac{1}{2\pi {\hbar}}J(\mathrm{x}, y_2(\mathrm{x}),\dots,y_N(\mathrm{x}))} \Big(1 + O({\hbar})\Big).
\end{align*}
The result then follows from Proposition \ref{JandNZ}, \ref{1loopJ} and \ref{hvalue}.

\end{proof}

We can now conclude with the proof of Theorem \ref{thm:Jones:FAMED}.

\begin{proof}[Proof of Theorem \ref{thm:Jones:FAMED}]
Proposition \ref{pfexistJ} covers part (1) of Theorem \ref{thm:Jones:FAMED}.
Proposition \ref{AEFJ} covers part (2) of Theorem \ref{thm:Jones:FAMED}.
\end{proof}

\subsection{Asymptotic expansion and AJ conjecture}\label{sub:AJ}
Consider the set of functions 
$$ \mathscr{F} = \{f \mid f: \RR_{>0} \times \CC \to \CC \}. $$
Let $\hat{M}, \hat{L}: \mathscr{F} \to \mathscr{F}$ be two operators defined by
$$ \hat{M}( f(\hbar, \mathrm{x}) ) = e^{\mathrm{x}} f(\hbar,  \mathrm{x}), \quad \hat{L}( f(\hbar,  \mathrm{x}) ) = f(\hbar,  \mathrm{x} + 4\pi i\hbar). $$
Let $q=e^{4\pi i\hbar}$. Note that $\hat{L}\hat{M} =  q\hat{M} \hat{L} $. In particular, $\hat{M}$ and $ \hat{L}$ define a quantum torus
$$ \mathscr{T} = \langle \hat{M}, \hat{L} \mid \hat{L}\hat{M} =  q\hat{M} \hat{L}  \rangle.$$

\begin{proof}[Proof of Theorem \ref{thm:AJ}]
Suppose there exists $\hat{A}(\hat{M}, \hat{L}, q) = \sum_{n=0}^d a_n(\hat{M},q) \hat{L}^n \in \ZZ[ \hat{M}, \hat{L}, q]$ such that 
$$ \hat{A}(\hat{M}, \hat{L}, q) (\mathfrak{J}_X(\hbar,  \mathrm{x})) 
= \sum_{n=0}^d a_n(\hat{M},q) (\mathfrak{J}_X(\hbar,  \mathrm{x} + 4\pi i n \hbar))
= 0.$$
Note that for $n=0,1,\dots,d$, 
\begin{align*}
&\frac{a_n(\hat{M},q) (\mathfrak{J}_X(\hbar, \mathrm{x} + 4n\pi i \hbar))}{\mathfrak{J}_X(\hbar, \mathrm{x})}\\
&= \frac{a_n(\hat{M},q) C( \mathrm{x} + 4n\pi i \hbar)}{C( \mathrm{x})}
\exp \bigg( \frac{i}{2\pi \hbar} (\phi_{m,l}( \mathrm{x} + 4n \pi i \hbar) - \phi_{m,l}( \mathrm{x})) \bigg) \bigg( 1 + O(\hbar) \bigg)\\
&= \frac{a_n(\hat{M},q)  C( \mathrm{x} + 4n\pi i \hbar)}{C( \mathrm{x})} \bigg[ \exp\bigg( 2 \phi_{m,l}'( \mathrm{x}) \bigg)\bigg]^n  \bigg( 1 + O(\hbar) \bigg) \\
&\to a_n(M, 1) L^n
\end{align*}
as $\hbar \to 0$, where $M = e^{\mathrm{x}} = e^{ w_m^{loc}}$ and $L = e^{ w_l^{loc}}$. 
As a result, since
$$
\lim_{\hbar \to 0} \frac{\hat{A}(\hat{M}, \hat{L}, q) (\mathfrak{J}_X(\hbar, \mathrm{x}))  }{\mathfrak{J}_X(\hbar, \mathrm{x})}
= 0,
$$
we have
$$ \hat{A}(M,L,1) = 0.$$
In particular, since $\hat{A}$ is a polynomial equation satisfied by $M$ and $L$ for all $\mathrm{x}$ sufficiently close to $0$,  we have $A_K^0(M,L) \mid \hat{A}(M,L,1)$, where $A_K^0(M,L)$ is the geometric component of the $PSL(2;\C)$ $A$-polynomial of the knot $K$.
\end{proof}

\section{FAMED triangulations for hyperbolic twist knots}\label{ITXn}
In this section we cover the geometric triangulations $X_n$ for the infinite family of hyperbolic twist knots $K_n$ constructed in \cite{BAGPN}. We prove that they are FAMED for the preferred longitude, which implies that all the theorems in this paper apply to the twist knots, and as such expand and generalize the results of \cite{BAGPN}.

\begin{proposition}\label{prop:FAMED:twist:knots}
For every $n\geqslant 2$, the ideal triangulation $X_n$ of the twist knot complement $\SS^3 \setminus K_n$ defined in \cite{BAGPN} is FAMED for the preferred longitude $l$ of $K_n$. Moreover, in Lemma \ref{xmeaning}, we can take $n_1=n_2=0$.
\end{proposition}

\begin{proof} We first consider the case where $n$ is odd. 
By a direct computation (see \cite[Section 4.3]{BAGPN} for details), the edge equations are given by
\begin{align*}
\omega_s^\CC(\tilde{\mathbf{z}}) &= 2\Log(z_U) + \Log(z_V') + \Log(z_V'') + \Log(z_W) + \Log(z_W') \\
\omega_0^\CC(\tilde{\mathbf{z}}) &= 2\Log(z_1) + \Log(z_1') + 2\Log(z_2) + \dots + 2\Log(z_p) + \Log(z_V) + \Log(z_W'') \\
\omega_1^\CC(\tilde{\mathbf{z}}) &= 2\Log(z_1'') + \Log(z_2') \\
\omega_k^\CC(\tilde{\mathbf{z}}) &= \Log(z_{k-1}') + 2\Log(z_k'') + \Log(z_{k+1}') \quad(\text{for $2\leq k \leq p-1$}) \\
\omega_p^\CC(\tilde{\mathbf{z}}) &= \Log(z_{p-1}') + 2\Log(z_p'') + \Log(z_U') + \Log(z_V') + \Log(z_W) \\
\omega_{p+1}^\CC(\tilde{\mathbf{z}}) &= \Log(z_p') + \Log(z_U') + 2\Log(z_U'') + \Log(z_V) + \Log(z_V'') + \Log(z_W') + \Log(z_W'')
\end{align*}
Thus, the matrices $\mathbf{A}$ and $\mathbf{B}$ are given by
\begin{align*}
\mathbf{A} = \ \
\begin{pNiceMatrix}[first-row, first-col]
 &  z_1& z_2 & z_3 & z_4 & \dots & z_{p-1} &z_{p}  & z_{U} & z_V & z_W  \\
\mathscr{E}_{X_n, s}  & 0 & 0 & 0 & 0 & \dots & 0 & 0 & 2 & -1 & 0 \\
\mathscr{E}_{X_n, 0}  & 1 & 2 & 2 & 2 & \dots & 2 & 2 & 0 & 1 & 0 \\
\mathscr{E}_{X_n, 1}  & 0 & -1 & 0 & 0 & \dots & 0 & 0 & 0 & 0 & 0 \\
\mathscr{E}_{X_n, 2}  & -1 & 0 & -1 & 0 & \dots & 0 & 0 & 0 & 0 & 0 \\
\mathscr{E}_{X_n, 3}  & 0 & -1 & 0 & -1 & \dots & 0 & 0 & 0 & 0 & 0 \\
\vdots  & \vdots & \vdots & \vdots & \vdots & \vdots & \vdots & \vdots & \vdots & \vdots & \vdots \\
\mathscr{E}_{X_n, p-1}  & 0 & 0 & 0 & 0 & \dots & 0 & -1 & 0 & 0 & 0 \\
\mathscr{E}_{X_n, p+1}  & 0 & 0 & 0 & 0 & \dots & 0 & -1 & -1 & 1 & -1 \\
l & 0 & 0 & 0 & 0 & \dots & 0 & 0 & 2 & -2 & 2
\end{pNiceMatrix}
\end{align*}
and
\begin{align*}
\mathbf{B} = \ \
\begin{pNiceMatrix}[first-row, first-col]
 &  z_1& z_2 & z_3 & z_4 & \dots & z_{p-1} &z_{p}  & z_{U} & z_V & z_W  \\
\mathscr{E}_{X_n, s}  & 0 & 0 & 0 & 0 & \dots & 0 & 0 & 0 & 0 & -1 \\
\mathscr{E}_{X_n, 0}  & -1 & 0 & 0 & 0 & \dots & 0 & 0 & 0 & 0 & 1 \\
\mathscr{E}_{X_n, 1}  & 2 & -1 & 0 & 0 & \dots & 0 & 0 & 0 & 0 & 0 \\
\mathscr{E}_{X_n, 2}  & -1 & 2 & -1 & 0 & \dots & 0 & 0 & 0 & 0 & 0 \\
\mathscr{E}_{X_n, 3}  & 0 & -1 & 2 & -1 & \dots & 0 & 0 & 0 & 0 & 0 \\
\vdots  & \vdots & \vdots & \vdots & \vdots & \vdots & \vdots & \vdots & \vdots & \vdots & \vdots \\
\mathscr{E}_{X_n, p-1}  & 0 & 0 & 0 & 0 & \dots & 2 & -1 & 0 & 0 & 0 \\
\mathscr{E}_{X_n, p+1}  & 0 & 0 & 0 & 0 & \dots & 0 & -1 & 1 & 1 & 0 \\
l & 0 & 0 & 0 & 0 & \dots & 0 & 0 & 0 & 2 & 2 
\end{pNiceMatrix}.
\end{align*}
Thus, we have
$$
\mathbf{B}^{-1} = \ \
\begin{pmatrix}
-1 & -1 & 0 & 0 & 0 & \dots & 0 & 0 & 0 & 0 \\
-2 & -2 & -1 & 0 & 0 & \dots & 0 & 0 & 0 & 0  \\
-3 & -3 & -2 & -1 & 0 & \dots & 0 & 0 & 0 & 0 \\
-4 & -4 & -3 & -2 & -1 & \dots & 0 & 0 & 0 & 0 \\
\vdots & \vdots & \vdots & \vdots & \vdots & \dots & \vdots & \vdots & \vdots & \vdots  \\
-p+1 & -p+1 & -p+2 & -p+3 & -p+4 & \dots & -1 & 0 & 0 & 0 \\
-p & -p & -p+1 & -p+2 & -p+3 & \dots & -2 & -1 & 0 & 0 \\
-p-1 & -p & -p+1 & -p+2 & -p+3 & \dots & -2 & -1 & 1 & -1/2 \\
1 & 0 & 0 & 0 & 0 & \dots & 0 & 0 & 0 & 1/2 \\
-1 & 0 & 0 & 0 & 0 & \dots & 0 & 0 & 0 & 0 \\
\end{pmatrix}
$$
and
$$
\mathbf{B}^{-1} \mathbf{A} = \ \
\begin{pmatrix}
 -1 & -2 &  \dots & -2 & -2 & -2 & 0 & 0 \\
 -2 & -3 &  \dots & -4 & -4 & -4 & 0 & 0 \\
 \vdots & \ddots & \vdots & \vdots & \vdots & \vdots & \vdots & \vdots \\
 -2 & -4 & \dots & -2p+3 & -2p+2 & -2p+2 & 0 & 0 \\
 -2 & -4 & \dots & -2p+2 & -2p+1 & -2p & 0 & 0 \\
 -2 & -4 &  \dots & -2p+2 & -2p & -2p-4 & 3 & -2 \\
 0 & 0 &  \dots & 0 & 0 & 3 & -2 & 1 \\
 0 & 0 &  \dots & 0 & 0 & -2 & 1 & 0
\end{pmatrix}.
$$
Besides, from \cite[Lemma 5.5]{BAGPN}, 
$$
\mathcal{E}Q\mathcal{E}=\ \ 
\begin{pNiceMatrix}[first-row, first-col]
 &  t_1& t_2 & \dots & t_{p-1} &t_{p}  & t_{U} & t_V & t_W  \\
t_1  & 1 & 1 &  \dots & 1 & 1 & -1 & 0 & 0 \\
t_2 & 1 & 2 &  \dots & 2 & 2 & -2 & 0 & 0 \\
\vdots  & \vdots & \ddots & \vdots & \vdots & \vdots & \vdots & \vdots & \vdots \\
t_{p-1}  & 1 & 2 & \dots & p-1 & p-1 & -(p-1) & 0 & 0 \\
t_p  & 1 & 2 & \dots & p-1 & p & -p & 0 & 0 \\
t_U & -1 & -2 &  \dots & -(p-1) & -p & p+2 & -3/2 & 1 \\
t_V & 0 & 0 &  \dots & 0 & 0 & -3/2 & 1 & -1/2 \\
t_W & 0 & 0 &  \dots & 0 & 0 & 1 & -1/2 & 0
\end{pNiceMatrix}.
$$
As a result,
\begin{align*}
\mathscr{G}
&= 
-2
Q
+
\begin{pmatrix}
\mathbf{1} & \mathbf{0} \\
\mathbf{0} & \mathbf{0}
\end{pmatrix}\\
&= \ \
\begin{pNiceMatrix}[first-row, first-col]
 &  t_1& t_2 & \dots & t_{p-1} &t_{p}  & t_{U} & t_V & t_W  \\
t_1  & -1 & -2 &  \dots & -2 & -2 & -2 & 0 & 0 \\
t_2 & -2 & -3 &  \dots & -4 & -4 & -4 & 0 & 0 \\
\vdots  & \vdots & \ddots & \vdots & \vdots & \vdots & \vdots & \vdots & \vdots \\
t_{p-1}  & -2 & -4 & \dots & -2p+3 & -2p+2 & -2p+2 & 0 & 0 \\
t_p  & -2 & -4 & \dots & -2p+2 & -2p+1 & -2p & 0 & 0 \\
t_U & -2 & -4 &  \dots & -2p+2 & -2p & -2p-4 & 3 & -2 \\
t_V & 0 & 0 &  \dots & 0 & 0 & 3 & -2 & 1 \\
t_W & 0 & 0 &  \dots & 0 & 0 & -2 & 1 & 0
\end{pNiceMatrix}.
\end{align*}
The computations above show that $X_n$ is FAMED for odd $n$. 
Similarly, when $n$ is even, by a direct computation (see \cite[Section 8.2]{BAGPN} for details)
\begin{align*}
\omega_s(\tilde{\mathbf{z}}) &= 2\Log(z_U) + \Log(z_V') + \Log(z_V'') + \Log(z_W) + \Log(z_W') \\
\omega_0(\tilde{\mathbf{z}}) &= 2\Log(z_1) + \Log(z_1') + 2\Log(z_2) + \dots + 2\Log(z_p) + \Log(z_V) + \Log(z_W'') \\
\omega_1(\tilde{\mathbf{z}}) &= 2\Log(z_1'') + \Log(z_2') \\
\omega_k(\tilde{\mathbf{z}}) &= \Log(z_{k-1}') + 2\Log(z_k'') + \Log(z_{k+1}') \quad(\text{for $2\leq k \leq p-1$}) \\
\omega_p(\tilde{\mathbf{z}}) &= \Log(z_{p-1}') + 2\Log(z_p'') + 2\Log(z_U') + \Log(z_U'') + \Log (z_V) + \Log(z_V') \\ & \quad + \Log(z_W) + \Log(z_W'') \\
\omega_{p+1}(\tilde{\mathbf{z}}) &= \Log(z_p') + \Log(z_U'') + \Log(z_V'') + \Log(z_W') 
\end{align*}
Thus, the matrices $\mathbf{A}$ and $\mathbf{B}$ are given by
\begin{align*}
\mathbf{A} = \ \
\begin{pNiceMatrix}[first-row, first-col]
 &  z_1& z_2 & z_3 & z_4 & \dots & z_{p-1} &z_{p}  & z_{U} & z_V & z_W  \\
\mathscr{E}_{X_n, s}  & 0 & 0 & 0 & 0 & \dots & 0 & 0 & 2 & -1 & 0 \\
\mathscr{E}_{X_n, 0}  & 1 & 2 & 2 & 2 & \dots & 2 & 2 & 0 & 1 & 0 \\
\mathscr{E}_{X_n, 1}  & 0 & -1 & 0 & 0 & \dots & 0 & 0 & 0 & 0 & 0 \\
\mathscr{E}_{X_n, 2}  & -1 & 0 & -1 & 0 & \dots & 0 & 0 & 0 & 0 & 0 \\
\mathscr{E}_{X_n, 3}  & 0 & -1 & 0 & -1 & \dots & 0 & 0 & 0 & 0 & 0 \\
\vdots  & \vdots & \vdots & \vdots & \vdots & \vdots & \vdots & \vdots & \vdots & \vdots & \vdots \\
\mathscr{E}_{X_n, p-1}  & 0 & 0 & 0 & 0 & \dots & 0 & -1 & 0 & 0 & 0 \\
\mathscr{E}_{X_n, p+1}  & 0 & 0 & 0 & 0 & \dots & 0 & -1 & 0 & 0 & -1 \\
l & 0 & 0 & 0 & 0 & \dots & 0 & 0 & - 2 & 4 & 2
\end{pNiceMatrix}
\end{align*}
and
\begin{align*}
\mathbf{B} = \ \
\begin{pNiceMatrix}[first-row, first-col]
 &  z_1& z_2 & z_3 & z_4 & \dots & z_{p-1} &z_{p}  & z_{U} & z_V & z_W  \\
\mathscr{E}_{X_n, s}  & 0 & 0 & 0 & 0 & \dots & 0 & 0 & 0 & 0 & -1 \\
\mathscr{E}_{X_n, 0}  & -1 & 0 & 0 & 0 & \dots & 0 & 0 & 0 & 0 & 1 \\
\mathscr{E}_{X_n, 1}  & 2 & -1 & 0 & 0 & \dots & 0 & 0 & 0 & 0 & 0 \\
\mathscr{E}_{X_n, 2}  & -1 & 2 & -1 & 0 & \dots & 0 & 0 & 0 & 0 & 0 \\
\mathscr{E}_{X_n, 3}  & 0 & -1 & 2 & -1 & \dots & 0 & 0 & 0 & 0 & 0 \\
\vdots  & \vdots & \vdots & \vdots & \vdots & \vdots & \vdots & \vdots & \vdots & \vdots & \vdots \\
\mathscr{E}_{X_n, p-1}  & 0 & 0 & 0 & 0 & \dots & 2 & -1 & 0 & 0 & 0 \\
\mathscr{E}_{X_n, p+1}  & 0 & 0 & 0 & 0 & \dots & 0 & -1 & 1 & 1 & -1 \\
l & 0 & 0 & 0 & 0 & \dots & 0 & 0 & 0 & 2 & 0 
\end{pNiceMatrix}.
\end{align*}
Thus, we have
$$
\mathbf{B}^{-1} = \ \
\begin{pmatrix}
-1 & -1 & 0 & 0 & 0 & \dots & 0 & 0 & 0 & 0 \\
-2 & -2 & -1 & 0 & 0 & \dots & 0 & 0 & 0 & 0  \\
-3 & -3 & -2 & -1 & 0 & \dots & 0 & 0 & 0 & 0 \\
-4 & -4 & -3 & -2 & -1 & \dots & 0 & 0 & 0 & 0 \\
\vdots & \vdots & \vdots & \vdots & \vdots & \dots & \vdots & \vdots & \vdots & \vdots  \\
-p+1 & -p+1 & -p+2 & -p+3 & -p+4 & \dots & -1 & 0 & 0 & 0 \\
-p & -p & -p+1 & -p+2 & -p+3 & \dots & -2 & -1 & 0 & 0 \\
-p-1 & -p & -p+1 & -p+2 & -p+3 & \dots & -2 & -1 & 1 & -1/2 \\
0 & 0 & 0 & 0 & 0 & \dots & 0 & 0 & 0 & 1/2 \\
-1 & 0 & 0 & 0 & 0 & \dots & 0 & 0 & 0 & 0 \\
\end{pmatrix}
$$
and
$$
\mathbf{B}^{-1} \mathbf{A} = \ \
\begin{pmatrix}
 -1 & -2 &  \dots & -2 & -2 & -2 & 0 & 0 \\
 -2 & -3 &  \dots & -4 & -4 & -4 & 0 & 0 \\
 \vdots & \ddots & \vdots & \vdots & \vdots & \vdots & \vdots & \vdots \\
 -2 & -4 & \dots & -2p+3 & -2p+2 & -2p+2 & 0 & 0 \\
 -2 & -4 & \dots & -2p+2 & -2p+1 & -2p & 0 & 0 \\
 -2 & -4 &  \dots & -2p+2 & -2p & -2p-1 & -1 & -2 \\
 0 & 0 &  \dots & 0 & 0 & -1 & 2 & 1 \\
 0 & 0 &  \dots & 0 & 0 & -2 & 1 & 0
\end{pmatrix}.
$$

Besides, from \cite[Theorem 8.4]{BAGPN}, 
$$
\mathcal{E}Q\mathcal{E}=\ \
\begin{pNiceMatrix}[first-row, first-col]
 &  t_1& t_2 & \dots & t_{p-1} &t_{p}  & t_{U} & t_V & t_W  \\
t_1  & 1 & 1 &  \dots & 1 & 1 & 1 & 0 & 0 \\
t_2 & 1 & 2 &  \dots & 2 & 2 & 2 & 0 & 0 \\
\vdots  & \vdots & \ddots & \vdots & \vdots & \vdots & \vdots & \vdots & \vdots \\
t_{p-1}  & 1 & 2 & \dots & p-1 & p-1 & p-1 & 0 & 0 \\
t_p  & 1 & 2 & \dots & p-1 & p & p & 0 & 0 \\
t_U & 1 & 2 &  \dots & p-1 & p & p+1 & -1/2 & -1 \\
t_V & 0 & 0 &  \dots & 0 & 0 & -1/2 & -1 & -1/2 \\
t_W & 0 & 0 &  \dots & 0 & 0 & -1 & -1/2 & 0
\end{pNiceMatrix}.
$$
As a result,
\begin{align*}
\mathscr{G}
&= 
-2
Q
+
\begin{pmatrix}
\mathbf{1} & \mathbf{0} \\
\mathbf{0} & \mathbf{0}
\end{pmatrix}\\
&= \ \
\begin{pNiceMatrix}[first-row, first-col]
 &  t_1& t_2 & \dots & t_{p-1} &t_{p}  & t_{U} & t_V & t_W  \\
t_1  & -1 & -2 &  \dots & -2 & -2 & -2 & 0 & 0 \\
t_2 & -2 & -3 &  \dots & -4 & -4 & -4 & 0 & 0 \\
\vdots  & \vdots & \ddots & \vdots & \vdots & \vdots & \vdots & \vdots & \vdots \\
t_{p-1}  & -2 & -4 & \dots & -2p+3 & -2p+2 & -2p+2 & 0 & 0 \\
t_p  & -2 & -4 & \dots & -2p+2 & -2p+1 & -2p & 0 & 0 \\
t_U & -2 & -4 &  \dots & -2p+2 & -2p & -2p-1 & -1 & -2 \\
t_V & 0 & 0 &  \dots & 0 & 0 & -1 & 2 & 1 \\
t_W & 0 & 0 &  \dots & 0 & 0 & -2 & 1 & 0
\end{pNiceMatrix}.
\end{align*}
The computations above show that $X_n$ is FAMED for even $n$. 

By \cite[Theorem 4.1, Theorem 8.2]{BAGPN}, $X_n$ is geometric at the complete hyperbolic structure. By continuity, $X_n$ is still geometric on a small neighborhood around the complete hyperbolic structure. Moreover, the last column of $\mathbf{B}^{-1}$, which is the same as the last row of $(\mathbf{B}^{-1})^{\!\top}$, is given by $(0,\dots,0,1/2,-1/2,0)$ and $(0,\dots,0,-1/2,1/2,0)$ for odd and even $n$ respectively. Note that in \cite{BAGPN}, in Figure 14 (when $n$ is odd) and Figure 23 (when $n$ is even), the algebraic intersection number of the meridian and longitude are $1$ and $-1$ respectively. Thus, by suitably choosing the orientations of the meridian and longitude such that their algebraic intersection number is $1$, in Lemma \ref{xmeaning}, we can take $n_1=n_2=0$. 

\end{proof}

\end{document}